\numberwithin{equation}{section}
\newcommand{\margnote}[1]{
\ifthenelse{\boolean{shownotes}}%
{\marginpar{\raggedright\tiny\texttt{#1}}}%
{}%
}
\newcommand{\hole}[1]{
\ifthenelse{\boolean{shownotes}}%
{\begin{center} \fbox{ \rule {.25cm}{0cm}
\rule[-.1cm]{0cm}{.4cm} \parbox{.85\textwidth}{\begin{center}
\texttt{#1}\end{center}} \rule {.25cm}{0cm}}\end{center}}
{}
}
\theoremstyle{plain}
\newtheorem{lemma}{Lemma}[section]
\newtheorem{theorem}[lemma]{Theorem}
\newtheorem{proposition}[lemma]{Proposition}
\newtheorem{corollary}[lemma]{Corollary}
\newtheorem*{stheorem}{Theorem}
\theoremstyle{definition}
\newtheorem{remark}[lemma]{Remark}
\newtheorem{definition}[lemma]{Definition}
\newtheorem{example}[lemma]{Example}
\theoremstyle{remark}
\newcommand{\Id}{\mathrm{Id}}
\newcommand{\bw}{\mathbf{w}}
\newcommand{\bA}{\mathbf{A}}
\DeclareFontFamily{U}{mathx}{}
\DeclareFontShape{U}{mathx}{m}{n}{<-> mathx10}{}
\DeclareSymbolFont{mathx}{U}{mathx}{m}{n}
\DeclareMathAccent{\widehat}{0}{mathx}{"70}
\DeclareMathAccent{\widecheck}{0}{mathx}{"71}
\newcommand{\R}{\mathbb{R}}
\newcommand{\C}{\mathbb{C}}
\newcommand{\N}{\mathbb{N}}
\newcommand{\vep}{\varepsilon}
\newcommand{\cT}{{\mathcal{T}}}
\newcommand{\cL}{{\mathcal{L}}}
\newcommand{\cD}{{\mathcal{D}}}
\newcommand{\cR}{{\mathcal{R}}}
\newcommand{\cS}{{\mathcal{S}}}
\newcommand{\cP}{{\mathcal{P}}}
\newcommand{\cA}{{\mathcal{A}}}
\newcommand{\ccC}{\mathscr{C}}
\newcommand{\ccB}{\mathscr{B}}
\renewcommand{\Re}{\mathrm{Re}\,} 
\renewcommand{\Im}{\mathrm{Im}\,}
\newcommand{\ind}{\mathrm{ind}\,}
\newcommand{\nul}{\mathrm{nul}\,}
\newcommand{\ep}{\epsilon}
\newcommand{\ess}{\sigma_\mathrm{\tiny{ess}}}
\newcommand{\ptsp}{\sigma_\mathrm{\tiny{pt}}}
\newcommand{\iptsp}{\widetilde{\sigma}_\mathrm{\tiny{pt}}}
\newcommand{\sppi}{\sigma_\pi}
\newcommand{\spd}{\sigma_\delta}
\newcommand{\<}{\langle}
\renewcommand{\>}{\rangle}
\newcommand{\ShowColoredChanges}{true} 
\begin{document}

\title[Stability of diffusion-degenerate Nagumo fronts I]{Stability of stationary reaction diffusion-degenerate Nagumo fronts I: \\spectral analysis}

\author[R. Folino]{Raffaele Folino \orcidlink{0000-0001-9089-1393}}
 
\address{{\rm (R. Folino)} Instituto de 
Investigaciones en Matem\'aticas Aplicadas y en Sistemas\\Universidad Nacional Aut\'onoma de 
M\'exico\\Circuito Escolar s/n, Ciudad de M\'{e}xico C.P. 04510 (Mexico)}

\email{folino@aries.iimas.unam.mx}

\author[C. A. Hern\'{a}ndez Melo]{C\'{e}sar A. Hern\'{a}ndez Melo \orcidlink{0009-0003-7194-4543}}

\address{{\rm (C. A. Hern\'{a}ndez Melo)} Departamento de Matem\'{a}tica\\Universidade Estadual de Maring\'{a}\\Av. Colombo, 5790 Jd. Universit\'ario, CEP 87020-900, Maring\'a, PR (Brazil)}

\email{cahmelo@uem.br}

\author[L. F. L\'{o}pez R\'{\i}os]{Luis F. L\'{o}pez R\'{\i}os \orcidlink{0000-0001-9949-2620}}
 
\address{{\rm (L. F. L\'{o}pez R\'{\i}os)} Instituto de 
Investigaciones en Matem\'aticas Aplicadas y en Sistemas\\Universidad Nacional Aut\'onoma de 
M\'exico\\Circuito Escolar s/n, Ciudad de M\'{e}xico C.P. 04510 (Mexico)}

\email{luis.lopez@aries.iimas.unam.mx}

\author[R. G. Plaza]{Ram\'on G. Plaza \orcidlink{0000-0001-8293-0006}}

\address{{\rm (R. G. Plaza)} Instituto de Investigaciones en Matem\'aticas Aplicadas y en Sistemas\\Universidad Nacional Aut\'onoma de M\'exico\\Circuito Escolar s/n, Ciudad Universitaria\\C.P. 04510 Cd. de M\'{e}xico (Mexico)}

\email{plaza@aries.iimas.unam.mx}

\begin{abstract}
This paper establishes the spectral stability of monotone, stationary front solutions for reaction-diffusion equations where the reaction function is of Nagumo (or bistable) type and with diffusion coefficients which are density dependent and degenerate at zero (one of the equilibrium points of the reaction). These stationary profiles connect the non-degenerate equilibrium point with the degenerate state at zero, they are monotone, and arrive to the degenerate state at a finite point. They are neither sharp nor smooth. The degeneracy of the diffusion precludes the application of standard techniques to locate the essential spectrum of the linearized operator around the wave in the energy space $L^2$. This difficulty is overcome with a suitable partition of the spectrum, the analysis of singular sequences, a generalized convergence of operators technique and refined energy estimates. It is shown that the $L^2$-spectrum of the linearized operator around the front is real and with a spectral gap, that is, a positive distance between the imaginary axis and the rest of the spectrum, with the exception of the origin. Moreover, the origin is a simple isolated eigenvalue, associated to the derivative of the profile as eigenfunction (the translation eigenvalue). Finally, it is shown that the linearization generates an analytic semigroup that decays exponentially outside a one-dimensional eigenspace associated to the zero eigenvalue.
\end{abstract}

\keywords{Nonlinear degenerate diffusion, spectral stability, stationary traveling fronts, Nagumo reaction-diffusion equations}

\subjclass[2020]{35K57, 35K65, 35B40, 35P15}

\maketitle
\setcounter{tocdepth}{1}



\section{Introduction}
\label{secintro}

In this paper we consider the following reaction-diffusion equation
\begin{equation}
\label{degRD}
 	u_t = (D(u)u_x)_x + f(u),
\end{equation}
for an unknown density $u= u(x,t)\in\R$, with $x \in \R$, $t > 0$, where the diffusion coefficient $D(u)$ is a non-negative function depending on $u$ and \textit{degenerate} at $u = 0$. To be more precise, we assume that 
$D$ satisfies 
\begin{equation}
\label{hypD}
\begin{aligned}
& D(0) = 0, \;\; \, D(u) > 0 \; \; \text{for all} \, u \in (0,1],\\
& D \in C^2([0,1];\R) \;\; \text{with} \; D'(u) > 0 \; \text{for all} \; u \in [0,1].
\end{aligned}
\end{equation}
As an example we have the quadratic function 
\begin{equation}
\label{Dbeta}
 D(u) = u^2 + b u,
\end{equation}
for some constant $b > 0$, which was proposed by Shigesada \textit{et al.} \cite{SKT79} to model dispersive forces 
due to mutual interferences between individuals of an animal population.

The reaction term $f \in C^2([0,1];\R)$ is supposed to be of \textit{bistable or Nagumo type} 
\cite{NAY62,McKe70}, that is, the following conditions are satisfied:
\begin{equation}
\label{bistablef}
	\begin{aligned}
	&f(0)=f(\alpha)=f(1)=0,
		&\qquad &f'(0), f'(1)<0,\quad f'(\alpha)>0,\\
	&f(u)>0\textrm{ in }(\alpha,1),
		&\qquad &f(u)<0\textrm{ in }(0,\alpha),
	\end{aligned}
\end{equation}
for a certain $\alpha \in (0,1)$. A typical reaction function satisfying \eqref{bistablef}, which is often found in
the literature, is the cubic polynomial
\begin{equation}
\label{cubicf}
	f(u)= u(1-u)(u-\alpha).
\end{equation}
Reaction functions of bistable type are used in many models of natural phenomena, such as electrothermal instability \cite{I95} or nerve conduction \cite{Lbr67a,McKe70}, among others. In terms of continuous descriptions of the spread of biological populations, it is often used to model the \emph{strong Allee effect} (cf. Murray \cite{MurI3ed}), namely, kinetics exhibiting positive growth rate for population densities over a threshold value ($u > \alpha$) and decay for densities below such value ($u < \alpha$). The function $f$, which has two competing stable states, $u=0$ and $u=1$, can also be interpreted as the derivative of a double-well potential, $F(u) = - \int_0^u f(s) \, ds$, with wells centered at such states.

The reaction-diffusion equation \eqref{degRD} with constant diffusivity, $D(u) \equiv D > 0$, and with the cubic nonlinearity \eqref{cubicf}, has been applied to many different physical and biological models and there is no consensus about its nomenclature. It is known as the Allen-Cahn equation to describe the motion of boundaries between phases in alloys \cite{AlCa79}, the Nagumo equation in neurophysiological modeling \cite{McKe70,NAY62}, the bistable reaction-diffusion equation \cite{FiM77}, the Chafee-Infante equation \cite{ChIn74b,ChIn74a} as well as the real Ginzburg--Landau equation for the variational description of phase transitions \cite{MeSc04a}. For simplicity, we follow McKean \cite{McKe70} and we call it the \emph{Nagumo reaction-diffusion equation}. 

\subsection*{Degenerate diffusivity and traveling fronts}

Since the seminal works by Kolmogorov, Petrovsky and Piskunov (KPP) \cite{KPP37} and Fisher \cite{Fis37}, reaction-diffusion equations of the form \eqref{degRD} have constituted prototypical models of multiple phenomena such as population dynamics, porous media, chemical reactions with diffusion and action potential propagation, among others. In all these contexts, traveling front solutions play a prominent role. There is a vast literature on the analysis of reaction-diffusion equations and their traveling front solutions which we will not review here. In early works the diffusion coefficient (or diffusivity) was considered as a positive constant. Since the work of Skellam \cite{Ske51} (see also \cite{Aron85,OkLe01}), who stressed out the role of the underlying random process prior to the diffusion approximation, it is now clear that in many situations in physics and biology the diffusivity might be a function of the density. 
The resulting nonlinear diffusion coefficient is called \emph{degenerate} if it approaches zero when the density tends to certain equilibrium points of the reaction (typically zero). In terms of population dynamics, zero diffusion for zero densities is tantamount to requiring no motility in regions of space where the population is very scarce or near absent; this is an experimentally observed feature that appears, for example, in models of bacterial dynamics \cite{KMMUS,LMP1,SMGA01}. In this paper, under hypotheses \eqref{hypD}, we assume that the nonlinear diffusion function $D = D(u)$ vanishes only at $u = 0$; in particular, the condition that $D > 0$ for $u > 0$ means that populations tend to ``avoid crowds" (cf. Aronson \cite{Aron85}). Models with degenerate diffusivities are also endowed with interesting mathematical properties. Among the new features one finds the emergence of traveling waves of ``sharp" type \cite{SaMa94a,SaMa97,Sh10} and, notably, that solutions may exhibit finite speed of propagation of initial disturbances, in contrast with the strictly parabolic case \cite{GiKe96}. Clearly, not all models with degenerate diffusions are related to biology. There is an important example coming from chemical engineering: the very well-known choice $D(u) = m u^{m-1}$, $m \geq 1$, which is often used to model diffusion in porous media; see, e.g., \cite{Muskat37,Vaz07} and the references therein. 

Reaction-diffusion models are typically formulated to support the emergence of traveling fronts, which are solutions of the form
\[
u(x,t) = \varphi(x - ct),
\]
where $\varphi : \R \to \R$ is known as the wave profile function and $c \in \R$ is the speed of propagation. Such fronts have asymptotic limits, $u_\pm = \lim_{\xi \to \pm \infty} \varphi(\xi)$, which are equilibrium points of the reaction function under consideration, $f(u_\pm) = 0$. Traveling fronts model a vast set of phenomena, such as invasions of biological agents or envelopes of bacterial colonies, for instance, and their study is of fundamental interest. In the case of degenerate diffusion coefficients, the existence of fronts was first studied for very specific forms of the nonlinear diffusion function, see \cite{Aron85,ARR81,New80,New83} for an abridged list of references, particularly in the case of a diffusivity of porous medium type. In the case of generic degenerate diffusion functions satisfying hypotheses \eqref{hypD}, Sanchez-Gardu\~no and Maini proved the existence of traveling fronts for kinetics of Fisher-KPP (also known as monostable) type \cite{SaMa94a,SaMa95} and of Nagumo (bistable) type \cite{SaMa97}. The authors apply a dynamical systems approach to establish the existence of heteroclinic connections in both cases. The degeneracy, for instance, is responsible for the emergence of \emph{sharp} traveling front solutions of the form $u(x,t) = \varphi_{c_*}(x-c_* t)$, where $c_* > 0$ is a threshold (sharp) speed value and the profiles satisfy $\phi_{c_*}(\xi) \equiv 0$ for all $\xi \geq \xi_0$ for a certain finite value $\xi_0 \in \R$. These sharp fronts are monotone but not smooth. In other words, these sharp orbits connect the two equilibrium points of the reaction (namely, $u = 0$ and $u = 1$) and arrive at the degenerate state at a finite point; the connection is continuous but not of class $C^1$, and hence the word ``sharp" to describe them. Sharp fronts occur for both the bistable and the Fisher-KPP cases. In addition, the authors in \cite{SaMa97,SaMa95,SaMa94a} also show the existence of families of smooth traveling fronts with speeds strictly greater than the threshold value $c_*$. In the Nagumo case, the dynamics is much richer, mainly due to the presence of a third (unstable) equilibrium point $u = \alpha$. This leads to a wider range of possible homoclinic and heteroclinic connections compared to those of the Fisher-KPP case. The authors in \cite{SaMa97} not only prove the existence of a unique positive wave speed, $c_* > 0$, associated to fronts of sharp type: depending on conditions relating $D$ with $f$, there exists a continuum of monotone fronts, pulses and oscillatory waves. An interesting structure also emerges in the Nagumo case: if $\int_0^1 D(u) f(u) \, du = 0$ then there exists a unique stationary monotone increasing front (with speed $c = 0$) connecting $u = 0$ with $u = 1$, as well as a unique stationary, monotone decreasing front connecting $u = 1$ with $u = 0$. These are \emph{stationary} degenerate Nagumo fronts, which constitute the main subject of this paper.

\subsection*{Previous results and literature review} In the literature there are many articles that describe the ``attractiveness'' property of sharp fronts for solutions to the Cauchy problem for reaction diffusion-degenerate equations with sufficiently fast decaying initial data, such as compactly supported (see, e.g.,\cite{Bir97,Bir02}), exponentially decaying \cite{KaRo04b,KaRo04a,DiKa12}, or even semi-compactly supported initial data \cite{XJMY24,XJMY25}. They prove that solutions to the Cauchy problem for \eqref{degRD} with initial data in decaying classes, such as compactly or semi-compactly supported, or exponentially decaying, evolve in time towards the degenerate front with minimum speed $c_* > 0$, that is, towards the sharp-front wave. In these analyses, the authors usually assume that the initial condition is non-negative. Even though some of these works contain the word ``stability" in the title, they do not truly analyze the stability of the fronts under small perturbations, as the initial conditions decay to zero at $+\infty$ or at $-\infty$, whereas the fronts have finite, non-zero limits. In other words, true perturbations of fronts may change their sign and the associated initial conditions, $u_0(x) = \varphi(x) + v_0(x)$, where $\varphi$ is the front profile and $v_0$ denotes a perturbation of the front, definitely do not have compact nor semi-compact support.

It is important to mention that the literature on the \emph{dynamical} stability of degenerate fronts under small perturbations is very scarce. Here we enlist the only works which, up to our knowledge, are devoted to the stability of degenerate fronts:
\begin{itemize}
\item Hosono's 1986 paper \cite{Hos86} is perhaps the first work that addresses the rigorous stability analysis of a sharp traveling front (with speed $c = c_*$) for a degenerate-diffusion equation with a porous-medium type of diffusion coefficient, that is, $u_t = (u^m)_{xx} + f(u)$, with $m > 0$ and reaction function $f$ of Nagumo type. The method of proof is  based on the construction of super- and sub-solutions and the comparison  principle. Hosono establishes the asymptotic convergence of solutions to the nonlinear equation to a  translated front when the initial data is close to the sharp front profile. 
\item The paper by Leyva and Plaza \cite{LeP20} establishes the spectral stability of smooth traveling fronts, with speed $c > c_*$, for diffusion-degenerate equations of Fisher-KPP type studied by S\'anchez-Gardu\~no and Maini \cite{SaMa95}. Here the degenerate diffusion coefficients are of generic type, satisfying \eqref{hypD}. In their work, the diﬃculties associated with the degeneracy of the diﬀusion term are overcome with the derivation of a kind relative entropy estimate with a well-suited exponential weight and by a suitable partition of the spectrum, tailored for degenerate problems.
\item The follow-up paper by Leyva \emph{et al.} \cite{LeLoP22} extends the previous spectral theory from \cite{LeP20} to the bistable or Nagumo case. The authors study the smooth, traveling fronts (with $c > c_*$) discovered by S\'anchez-Gardu\~no and Maini \cite{SaMa97} for the Nagumo reaction function. Once again, the diffusivity coefficient is of generic type. The authors use the same techniques to show the spectral stability of smooth degenerate Nagumo fronts in exponentially weighted Sobolev spaces.
  \item The recent analysis by Dalibard \emph{et al.} \cite{DaLoPe24} contains the first nonlinear stability result of smooth traveling fronts for equations with diffusivity of porous medium type and a generalized reaction of Fisher-KPP type. The authors show that that the linearized system underlies a spectral gap property in weighted $L^2$ spaces and provide quantitative estimates on the rate of decay of solutions. The nonlinear terms are controlled in the $L^\infty$ norm using the maximum principle. The authors prove that these traveling waves are nonlinearly stable under small
perturbations.
\end{itemize} 
The aforementioned articles \cite{DaLoPe24,Hos86,LeP20,LeLoP22} are the only ones that, as far as we know, do prove some form of stability -- spectral or nonlinear -- of the degenerate fronts under small perturbations. The remaining cited literature \cite{Bir97,KaRo04b,KaRo04a,DiKa12,Bir02,XJMY24,XJMY25} focuses on the complementary problem of ``attractiveness''.

\subsection*{The contributions of this paper}
This work is the first of a series of two papers that study the stability of stationary reaction diffusion-degenerate Nagumo fronts. This work establishes, in particular, the first step of a general stability program: the property of \emph{spectral stability}, which is based on the analysis of the spectrum of the linearized differential operator around the wave. Our approach is linked to the modern stability theory of nonlinear waves based on Evans function methods and the relation between spectral stability properties with the nonlinear stability of the wave as a solution to the evolution equation under small perturbations. We refer the reader to the papers by Alexander, Gardner and Jones \cite{AGJ90}, Sandstede \cite{San02}, the book by Kapitula and Promislow \cite{KaPro13} and the multiple references therein. Spectral stability can be formally defined as the property that the linearized operator around the traveling front, posed in an appropriate energy space, is ``well-behaved" in the sense that there are no eigenvalues with positive real part which could render exponentially growing-in-time solutions to the linear problem; for the precise statement, see Definition \ref{defspecstab} below. The degeneracy of the diffusion coefficient, however, is responsible of some technical difficulties even at the spectral level. Since the diffusion coefficient vanishes at $u=0$, one of the end points of the front, the corresponding asymptotic coefficient matrix (when the spectral problem is written as a first order system) is no longer hyperbolic; this prevents the direct application of standard results linking hyperbolicity, exponential dichotomies and Fredholm borders location to locate the essential spectrum. To overcome this, we adopt the particular partition of spectrum proposed by Leyva \emph{et al.} \cite{LeP20, LeLoP22} to deal with degenerate problems. Using these definitions and the techniques employed in \cite{LeP20, LeLoP22} -- parabolic regularization to locate a subset of the compression spectrum, the existence of Weyl sequences and energy estimates to control a subset of the approximate spectrum and a key identity combined with energy estimates to locate the point spectrum -- we prove the spectral stability of stationary fronts in $L^2(\R)$ for localized, finite-energy perturbations.

In general, we can say that we follow the methodology of Leyva \emph{et al.} \cite{LeP20, LeLoP22}. There is, however, an important difference (an extra difficulty) when studying stationary Nagumo degenerate fronts. The stationary fronts discovered by Sanchez-Gardu\~no and Maini \cite{SaMa97} are quite peculiar. They are \emph{almost sharp} in the sense that they arrive to the degenerate state at a finite point $x = \omega_0 \in \R$ but, in contrast to sharp fronts, the connection is of class $C^1$. The latter, however, is not of class $C^2$. They appear to be smooth but they are not (see, for example, Figure \ref{figstatNagumo} below for a numerical calculation of one of such fronts). As a result, the derivative of the profile, which is the eigenfunction associated to the eigenvalue zero (or translation eigenvalue), does not belong to $H^2(\R)$; see Lemma \ref{lemC3} below. It is to be observed that all these properties of the stationary fronts were not described in the existence result in \cite{SaMa97}. Consequently, the linearized operator around the wave should be defined on a slightly larger yet more natural domain; the precise definition is given in \eqref{eq:D(L)}. This technicality makes the spectral analysis more convoluted than in \cite{LeLoP22,LeP20}, particularly with respect to energy estimates, the determination of Weyl sequences and the parabolic approximation. Despite these challenges, we prove spectral stability and extend the technique developed in \cite{LeLoP22} to control the approximate spectrum, establishing a \emph{spectral gap}, that is, a positive distance between the imaginary axis and the spectrum, excluding the origin. In addition, it is shown that the spectrum of the linearized operator lies entirely on the real line. This spectral information yields an analytic semigroup generated by the linearization around the profile and its exponential decay outside a one-dimensional eigenspace associated to translation, all crucial elements in a nonlinear asymptotic stability analysis, which will be reported in a companion paper \cite{FHLP-II}.

  
\subsection*{Main result}
Let us assume that $\varphi = \varphi(x)$ is a stationary front solution to \eqref{degRD} connecting $u = 1$ with $u = 0$. Let $\cL$ be the linearized operator around the front, a linear operator posed on the energy space $L^2(\R)$ with dense domain $\cD(\cL) \subset L^2(\R)$, for the precise definitions, see Section \ref{sec:linearized} below. 
Then, the main result of this paper can be articulated in lay terms as follows; for the precise statements see Theorem \ref{theostab} and Lemma \ref{lemexpdecay}.

\begin{stheorem}
Under assumptions \eqref{hypD} and \eqref{bistablef}, any stationary, diffusion-degenerate front solution $\varphi$ connecting $\varphi = 1$ with $\varphi = 0$ is spectrally stable in the energy space $L^2(\R)$. More precisely, $\sigma(\cL) \subset (-\infty, -\beta] \cup \{0\}$,
for a certain positive constant $\beta$. Moreover, $\lambda = 0$ is a simple eigenvalue associated to the eigenfunction $\varphi_x \in \cD(\cL)$. Finally, if $\cP$ denotes the projection onto the one-dimensional eigenspace $\mathrm{span} \{ \varphi_x \}$, then the $C_0$-semigroup generated by $\cL$, which we denote as $\{ e^{t \cL}\}_{t \geq 0}$, is analytic and satisfies $\| e^{t \cL} \cP \| \leq M e^{-\beta t}$, for all $t \geq 0$ and some $M \geq 1$.
\end{stheorem}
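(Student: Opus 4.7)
The plan is to implement the partition-of-spectrum approach of Leyva \emph{et al.}\ \cite{LeP20,LeLoP22}, adapted to the peculiarities of the stationary degenerate front, which (as the authors flag) reaches the degenerate state $u=0$ at a finite point $x=\omega_0$ in a $C^1$ but not $C^2$ fashion. In particular, the domain $\cD(\cL)$ is chosen slightly larger than $H^2(\R)$ in \eqref{eq:D(L)} so as to include $\varphi_x$. First, I would split $\sigma(\cL)=\sigma_\mathrm{app}(\cL)\cup\sigma_\mathrm{comp}(\cL)\cup\ptsp(\cL)$ in the sense of the approximate/compression/point decomposition used in the cited papers, and handle each piece separately with a quantitative bound of the form $\Re\lambda\le-\beta$, possibly with $\beta$ different in each piece and with the sharpest one selected at the end.

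\textbf{Controlling the approximate and compression spectra.} For $\lambda\in\sigma_\mathrm{app}(\cL)$ there exists a singular (Weyl) sequence $\{u_n\}\subset\cD(\cL)$ with $\|u_n\|_{L^2}=1$ and $(\cL-\lambda)u_n\to 0$. Testing against $u_n$ and taking real parts gives an energy identity of the form
\begin{equation*}
-\Re\skalprod{\cL u_n}{u_n}=\int_\R D(\varphi)|\partial_x u_n|^2\,dx-\int_\R\bigl(\tfrac12(D(\varphi))_{xx}+f'(\varphi)\bigr)|u_n|^2\,dx,
\end{equation*}
after an integration by parts. Because $\varphi\to 1$ as $x\to-\infty$ and $\varphi\equiv0$ on $[\omega_0,\infty)$, the multiplier $\tfrac12(D(\varphi))_{xx}+f'(\varphi)$ has negative limits $f'(1)<0$ at $-\infty$ and $f'(0)<0$ on $[\omega_0,\infty)$, so outside a compact set in $x$ the right-hand side dominates $\beta\|u_n\|_{L^2}^2$ for some $\beta>0$. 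Combining this with localized compactness (away from $\omega_0$ where $D(\varphi)$ is uniformly positive) forces $\Re\lambda\le-\beta$ for any $\lambda\in\sigma_\mathrm{app}(\cL)\setminus\{0\}$. The compression spectrum $\sigma_\mathrm{comp}(\cL)$ is then attacked by the parabolic-regularization trick from \cite{LeP20}: replace $\cL$ by $\cL_\vep:=\cL+\vep\partial_x^2$, which is strictly parabolic and whose essential spectrum can be located with standard exponential dichotomy / Weyl's theorem, yielding a bound $\Re\lambda\le-\beta+O(\vep)$, and then let $\vep\downarrow 0$ using a generalized-convergence-of-operators argument to transfer the bound to $\cL$. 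The finite-point degeneracy at $\omega_0$ is the delicate ingredient here: the parabolic approximation must respect the enlarged domain of $\cL$, and the convergence $\cL_\vep\to\cL$ has to be verified in the generalized sense despite the loss of $H^2$-regularity at $\omega_0$.

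\textbf{Point spectrum, reality, and simplicity of zero.} For $\lambda\in\ptsp(\cL)$ with eigenfunction $u\in\cD(\cL)$, a direct computation yields the key identity
\begin{equation*}
\lambda\int_\R|u|^2\,dx=-\int_\R D(\varphi)|u_x|^2\,dx+\int_\R\bigl(\tfrac12(D(\varphi))_{xx}+f'(\varphi)\bigr)|u|^2\,dx,
\end{equation*}
which shows that $\lambda\in\R$ and $\lambda\le 0$ once the right-hand side is shown to be dominated by the negative bulk (again using $f'(0),f'(1)<0$ plus control near $x=\omega_0$ via $D(\varphi)\to 0$). Differentiating the profile equation $(D(\varphi)\varphi_x)_x+f(\varphi)=0$ once yields $\cL\varphi_x=0$, so $0\in\ptsp(\cL)$ with eigenfunction $\varphi_x$. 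To show \emph{simplicity}, I would use the monotonicity $\varphi_x<0$ on $(-\infty,\omega_0)$ to apply a Sturm-type argument or, equivalently, a variation-of-parameters construction: any other solution to $\cL u=0$ is linearly independent at $-\infty$ and must fail to lie in $\cD(\cL)$ because of its exponential growth (the hyperbolic end at $-\infty$ is nondegenerate). The absence of a Jordan block then follows from the Fredholm alternative applied to $\cL u=\varphi_x$: a solvability computation gives $\int_\R\varphi_x\psi\,dx\ne 0$ for any adjoint null vector $\psi$, precluding a generalized eigenvector.

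\textbf{From spectrum to semigroup.} Once $\sigma(\cL)\subset(-\infty,-\beta]\cup\{0\}$ with $\{0\}$ a simple isolated eigenvalue, I would verify that $\cL$ is sectorial on $L^2(\R)$ by estimating the resolvent in a sector enclosing the right half-plane (away from $\{0\}$), using the same energy identities plus the degenerate-parabolic nature of the principal part; this yields that $\cL$ generates an analytic $C_0$-semigroup. The Riesz spectral projection $\cQ$ onto $\mathrm{span}\{\varphi_x\}$ and its complement $\cP=I-\cQ$ commute with the semigroup, and since $\sigma(\cL|_{\mathrm{Ran}\,\cP})\subset(-\infty,-\beta]$, standard sectorial-operator theory (see \cite{KaPro13}) delivers $\|e^{t\cL}\cP\|\le Me^{-\beta t}$ for some $M\ge 1$. \textbf{Main obstacle.} The hardest step is the localization of $\sigma_\mathrm{app}(\cL)$ near the degeneracy point $\omega_0$: because $D(\varphi)$ vanishes there but $\varphi_x$ remains nonzero, Weyl sequences may concentrate at $\omega_0$, and ruling this out requires a careful energy estimate that exploits the $C^1$ (but not $C^2$) matching together with the structure of $\cD(\cL)$ in \eqref{eq:D(L)}.
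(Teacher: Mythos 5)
Your overall architecture (partition into point/compression/approximate spectra, Weyl sequences with local compactness for the approximate part, the regularization $\cL^\epsilon=\cL+\epsilon\partial_x^2$ with Fredholm-index stability for the compression part, Fredholm alternative for the Jordan block, and sectoriality plus spectral projection for the semigroup) coincides with the paper's. But there is a genuine gap in your treatment of the \emph{point spectrum}. The unweighted identity you test against an eigenfunction,
\begin{equation*}
\Re\lambda\int_\R|u|^2\,dx=-\int_\R D(\varphi)|u_x|^2\,dx+\int_\R\bigl(\tfrac12(D(\varphi))_{xx}+f'(\varphi)\bigr)|u|^2\,dx,
\end{equation*}
cannot be ``dominated by the negative bulk'': the zeroth-order coefficient is \emph{positive} in the transition region where $\varphi$ is near $\alpha$ (since $f'(\alpha)>0$ by \eqref{bistablef}), and unlike a singular sequence, a genuine eigenfunction has no reason to escape to the spatial ends where $f'(u_\pm)<0$ — it may well concentrate exactly where the potential is positive. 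This estimate therefore only yields $\Re\lambda\le\sup_x(\tfrac12(D(\varphi))_{xx}+f'(\varphi))$, which can be positive; if the naive form were sign-definite there would be nothing bistable about the problem. Moreover, as written your identity silently discards the imaginary part of the cross term $\int (D(\varphi))_x\,u_x^*u\,dx$, so reality of the eigenvalues does not follow either. The missing idea is the paper's weighted (ground-state) identity of Proposition \ref{lemidentity}: multiplying the eigenvalue equation by $D(\varphi)$ before pairing with $u$ and exploiting $\cL\varphi_x=0$ with $\varphi_x<0$ on $(-\infty,\omega_0)$ gives
\begin{equation*}
\lambda\int_{-\infty}^{\omega_0}D(\varphi)|u|^2\,dx=-\int_{-\infty}^{\omega_0}D(\varphi)^2\varphi_x^2\left|\left(\frac{u}{\varphi_x}\right)_x\right|^2dx,
\end{equation*}
which is manifestly real and nonpositive and delivers $\lambda\in\R$, $\lambda\le0$, and the geometric simplicity of $0$ (the right-hand side vanishes iff $u\in\mathrm{span}\{\varphi_x\}$) in one stroke. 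Extending this identity from $H^2(\R)$ to all of $\cD(\cL)$ requires an approximation lemma (the paper's Lemma \ref{lem:core}), which your sketch also does not address; the same density step is needed to justify integrating by parts against Weyl sequences that live in $\cD(\cL)\setminus H^2(\R)$.

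Your alternative route to geometric simplicity via a Sturm/second-solution growth argument at the hyperbolic end $x\to-\infty$ is plausible but incomplete as stated: one must also rule out solutions that decay at $-\infty$ and misbehave only at the degenerate point $\omega_0$, where the ODE is singular and the matching with the forced identity $u\equiv0$ on $(\omega_0,\infty)$ (from $0=\cL u=f'(0)u$ there) is the delicate step. The remaining components — the $\sigma_\pi$ estimate $\Re\lambda\le f'(1)+\eta$ via local compactness, the semi-Fredholm perturbation for $\sigma_\delta$, the resolvent estimate for analyticity, and the exponential decay of $e^{t\cL}$ on $\mathrm{Ran}\,\cP$ — match the paper's proofs of Theorems \ref{lem:sd-sigmapi}, \ref{thmstabsdelta}, \ref{lemresolvest} and Lemma \ref{lemexpdecay}.
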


\subsection*{Plan of the paper} 
This work is structured as follows. In Section \ref{secstructure} we review the existence theory of stationary Nagumo degenerate fronts developed in \cite{SaMa97} and prove some new extra features, such as the existence of a finite arrival point to the degenerate state, the precise decay of the profile and its regularity. Section \ref{sec:linearized} is devoted to the definition of the linearized operator around the wave, of its domain, as well as to posing the spectral problem. The particular partition of the spectrum from Leyva \emph{et al.} \cite{LeP20,LeLoP22} is also reviewed in this Section. The central Section \ref{secspectralstab} contains the proof that the $L^2$-spectrum of the linearized operator around the stationary front is stable. Section \ref{secdecaysg} establishes the existence of an analytic semigroup generated by the linearized operator and its decaying properties. We finish the paper with some final discussion in Section \ref{secdiscuss}.

\subsection*{Notation}
We denote the real and imaginary parts of a complex number $\lambda \in \C$ by $\Re\lambda$ and $\Im\lambda$, respectively, as well as complex conjugation by ${\lambda}^*$. 
Standard Sobolev spaces of complex-valued functions on the real line will be denoted as $L^2(\R)$ and $H^m(\R)$, with $m \in \N$, endowed with the standard inner products and norms. 
In particular, in rest of the paper we use the convention
\[
\langle u, v \>_{L^2} = \int_\R u(x)^* v(x) \, dx, \qquad \qquad \| u \|^2_{L^2} = \< u, u \>_{L^2}.
\]
We use lowercase boldface roman font to indicate column vectors (e.g., $\bw$), and with the exception of the identity matrix $I$, we use upper case boldface roman font to indicate square matrices (e.g., $\bA$). Linear operators acting on infinite-dimensional spaces are indicated with calligraphic letters (e.g., $\cL$ and $\cT$), except for the identity operator which is indicated by $\Id$. We use the standard notation in asymptotic analysis \cite{Erde56}, in which the symbol ``$\sim$" means  ``behaves asymptotically like" as $x \to x_*$, more precisely, $f \sim g$ as $x \to x_*$ if $f - g = o(|g|)$ as $x \to x_*$ (or equivalently, $f/g \to 1$ as $x \to x_*$ if both functions are positive).

\section{The structure of stationary diffusion-degenerate Nagumo fronts}
\label{secstructure}

In this paper we are interested in traveling fronts solutions to equation \eqref{degRD} under hypotheses 
\eqref{hypD} and \eqref{bistablef}. The existence and fundamental properties of these solutions have been 
addressed by S\'anchez-Gardu\~no and Maini \cite{SaMa97}. For the reader's convenience, in this section we recall some of the main 
structural features of these traveling fronts and provide some additional information as well (more 
precisely, the exact decay rates of the front at equilibria and the existence of a finite arrival point to the degenerate end state).

Consider traveling wave solutions of the form $u(x,t) = \varphi(\xi)$, where $\xi=x-ct$ is the translation 
(Galilean) variable, and $c \in \R$ denotes the velocity of the front. 
Moreover, we assume that the wave profile $\varphi$ is monotone and connect the two stable points $u=0,1$.
First, we establish a relation between the integral of the product $D(u)f(u)$ and the sign of the velocity $c$.
Upon substitution, we find that the function $\varphi = \varphi(\xi)$ satisfies the equation
\begin{equation}\label{odeFront}
(D(\varphi) \varphi_\xi )_\xi + c \varphi _\xi + f(\varphi) = 0.
\end{equation}
Multiplying \eqref{odeFront} by $ D(\varphi) \varphi_\xi$, we obtain
\[
\frac{1}{2}\frac{d}{d\xi}\Big(  D(\varphi)^2 \varphi _\xi^2 \Big) + c D(\varphi)\varphi_\xi^{2}+ 
D(\varphi)  f(\varphi) \varphi_\xi=0,
\]  
and, after integrating over $\mathbb{R}$ and using that the first term vanishes, we arrive at
\[
c \int_{-\infty}^{\infty} D(\varphi) \varphi_\xi^2 \, d\xi =- \int_{-\infty}^{\infty} D(\varphi) 
f(\varphi) \varphi_\xi \, d\xi. 
\]
Observe that, because of the positive sign of the integral in the left-hand side, $c$ will always have the opposite sign of the integral on the right-hand side. 
In particular, by using the substitution $u = \varphi (\xi)$ in the integral on the left-hand side, we end up with two distinct cases depending on whether $\varphi$
is non-decreasing or non-increasing.
Indeed, one has
\begin{equation}\label{cVel}
	c =-K\int_{0}^{1} D(u) f(u) du, \qquad \mbox{ where } \qquad K:=\left(\int_{-\infty}^{\infty} D(\varphi) \varphi_\xi^2 d\xi\right)^{-1}>0,
\end{equation}
in the case $\varphi(-\infty) =0$, $\varphi(\infty)=1$ and $\varphi_\xi\geq0$ in $\mathbb R$.
On the other hand,  
\begin{equation}\label{cVel-dec}
	c =K\int_{0}^{1} D(u) f(u) du,
\end{equation}
when $\varphi(-\infty)=1 $, $\varphi(\infty)=0$ and $\varphi_\xi\leq0$ in $\mathbb R$.
Define the function $\mathscr{D}: [0,1] \to \mathbb{R}$ to be
\begin{equation}\label{defcalD}
	\mathscr{D}(\varphi) := \int_{0}^{\varphi} D(u) f(u) du.
\end{equation}
Thus, equations \eqref{cVel}-\eqref{cVel-dec} show that the sign of $c$ coincides with: 
\begin{itemize}
\item[(i)] the sign of $-\mathscr{D}(1)$ for the front connecting the states $u = 0$ and $u=1$; 
\item[(ii)] the sign  of $\mathscr{D}(1)$ for the front connecting the states $u=1$ and $u=0$.
\end{itemize}
As a consequence, if $\mathscr{D}(1)>0$ ($\mathscr{D}(1)<0)$, it is not possible to connect $u=0$ to $u=1$ ($u=1$ to $u=0$) with a non-decreasing (non-increasing) 
profile traveling with speed $c>0$ ($c<0$); see also Lemma \ref{lemNoExis}.
Conversely, if $c$ is properly chosen according to \eqref{cVel}-\eqref{cVel-dec}, then it is possible to prove the existence of the traveling front $\varphi$.
One way to prove it is to make a detailed phase 
plane analysis (for an alternate approach based on the Conley index, see \cite{ElATa10a}). 
Equation \eqref{odeFront} is written as the first order ODE system (singular at $\varphi =0$)
\begin{equation}\label{singODE}
\begin{aligned}
\frac{d \varphi}{d\xi} &= v \\
D(\varphi) \frac{dv}{d\xi} &= -cv -D'(\varphi)v^2 - f(\varphi).
\end{aligned}
\end{equation}
To overcome the singularity, Aronson \cite{Aron80} defines the change of variables,  $\tau = \tau(\xi)$, such that
\[
\frac{d\tau}{d \xi} = \frac{1}{D(\varphi(\xi))},
\]
and, therefore, system \eqref{singODE} is transformed into
\begin{equation}\label{odeSys}
\begin{aligned}
\frac{d\varphi}{d\tau} &= D(\varphi)v \\
 \frac{dv}{d\tau} &= -cv -D'(\varphi)v^2 - f(\varphi).
\end{aligned}
\end{equation}

System \eqref{odeSys} has four equilibria: $ P_0=(0,0) $, $ P_1=(1,0) $, $ P_*=(\alpha,0) $ and $ P_c 
= (0, -c/D'(0) ) $. Further analysis of the linearization of \eqref{odeSys} shows that $ P_0 $ is a 
non-hyperbolic point and $ P_1 $ is a saddle point for all values of the velocity $ c $.  Showing the existence of a heteroclinic trajectory connecting the equilibrium $ P_0 $ and $ P_1 $ accounts for the existence of a traveling front $ \varphi$ connecting the stationary states $ 
u=0 $ and $ u=1 $. S\'{a}nchez-Gardu\~{n}o and Maini \cite{SaMa97} proved  the existence of two steady wave fronts 
for equation \eqref{degRD} under hypothesis \eqref{hypD} and \eqref{bistablef}. This is summarized in the following result.

\begin{theorem}[S\'{a}nchez-Gardu\~{n}o and Maini \cite{SaMa97}]\label{exisThm}
If
\begin{equation}
\label{eq:int-Df}
\mathscr{D}(1) = \int_0^1 D(u) f(u) du = 0,
\end{equation}
then there exist two steady monotonic fronts with velocity $ c=0 $ for the equation \eqref{degRD}, one 
connecting the states $ u=0 $ and $ u=1 $ and the other connecting $u=1 $ to $ u=0 $
\end{theorem}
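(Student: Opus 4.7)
The plan is to exploit the variational structure that appears when $c=0$. Setting $c=0$ in \eqref{odeFront} gives $(D(\varphi)\varphi_\xi)_\xi + f(\varphi)=0$, and multiplying by $D(\varphi)\varphi_\xi$ yields
\[
\frac{d}{d\xi}\left[\tfrac{1}{2}(D(\varphi)\varphi_\xi)^2 + \mathscr{D}(\varphi)\right] = 0.
\]
Hence $H(\varphi,\varphi_\xi) := \tfrac12 (D(\varphi)\varphi_\xi)^2 + \mathscr{D}(\varphi)$ is a first integral of the profile ODE. A heteroclinic orbit joining two equilibria must lie on a level set for which $\varphi_\xi\to 0$ at both ends, which forces $\mathscr{D}(0)=\mathscr{D}(1)$. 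Since $\mathscr{D}(0)=0$ by definition, this equation is solvable precisely when hypothesis \eqref{eq:int-Df} holds, so the assumption of the theorem is exactly the compatibility condition that places $P_0=(0,0)$ and $P_1=(1,0)$ on the same level curve $\{H=0\}$.

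On this level set one can solve explicitly, obtaining the separated equation
\[
\varphi_\xi = \pm\frac{\sqrt{-2\mathscr{D}(\varphi)}}{D(\varphi)}, \qquad \varphi\in(0,1).
\]
For this to define a real orbit I need $\mathscr{D}(\varphi)\le 0$ on $[0,1]$; this is immediate from the bistability conditions \eqref{bistablef} combined with $D>0$ on $(0,1]$, since $\mathscr{D}$ decreases on $(0,\alpha)$, reaches its minimum at $\alpha$, and then strictly increases up to $\mathscr{D}(1)=0$, staying strictly negative on $(0,1)$. The monotone increasing front is then constructed by the quadrature
\[
\xi - \xi_0 = \int_{1/2}^{\varphi(\xi)} \frac{D(s)}{\sqrt{-2\mathscr{D}(s)}}\, ds,
\]
taking the positive branch; it is monotone nondecreasing by construction, and since the right-hand side is strictly monotone in $\varphi$, its inverse defines a $C^1$ profile $\varphi(\xi)$ on the open interval $(0,1)$. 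The decreasing front from $1$ to $0$ is then obtained by the reflection $\xi\mapsto -\xi$.

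It remains to verify the asymptotic behavior at the two endpoints, which determines both the existence of the asymptotic limit at $+\infty$ and the fact that the profile reaches the degenerate state at a finite point. Near $\varphi=1$ we have, using $f(1)=0$, $f'(1)<0$ and $D(1)>0$, the expansion $\mathscr{D}(\varphi)\sim\tfrac12 D(1)f'(1)(\varphi-1)^2$, so $D(\varphi)/\sqrt{-2\mathscr{D}(\varphi)}\sim C/|1-\varphi|$, which is \emph{non}-integrable; thus $\xi\to+\infty$ as $\varphi\to 1^-$, confirming the asymptotic state. Near $\varphi=0$, using $D(u)\sim D'(0)u$ and $f(u)\sim f'(0)u$ from \eqref{hypD}--\eqref{bistablef}, one obtains $\mathscr{D}(\varphi)\sim \tfrac13 D'(0)f'(0)\varphi^3$, whence $D(\varphi)/\sqrt{-2\mathscr{D}(\varphi)}\sim C'/\sqrt{\varphi}$ which \emph{is} integrable; therefore $\xi$ approaches a finite value $\omega_0\in\R$ as $\varphi\to 0^+$. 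This is precisely the "almost sharp" finite arrival point described in the introduction.

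The main obstacle, and the point requiring the most care, is the degeneracy of the ODE system \eqref{singODE} at $\varphi=0$: standard phase-plane arguments in $(\varphi,v)$ do not apply at $P_0$ because the linearization there is non-hyperbolic (this is why Aronson's time change to system \eqref{odeSys} is needed in the dynamical approach). The Hamiltonian route sidesteps the non-hyperbolicity by working directly with the level curve $\{H=0\}$ and reducing everything to the scalar quadrature; the singularity of $D$ at $0$ then appears only through the integrability analysis above, which is exactly what produces the finite-arrival behavior. This finite arrival, together with the $C^1$ (but not $C^2$) regularity noted in the introduction, will then be available as a starting point for the spectral analysis in the subsequent sections.
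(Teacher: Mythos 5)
Your construction is correct, but it is worth being clear that the paper does not prove this statement at all: Theorem \ref{exisThm} is quoted from S\'anchez-Gardu\~no and Maini \cite{SaMa97}, whose argument is a phase-plane analysis of the desingularized system \eqref{odeSys} obtained via Aronson's time change, establishing a heteroclinic connection between $P_0$ and $P_1$ (with the Conley index as an alternative route, cf.\ \cite{ElATa10a}). Your Hamiltonian/quadrature argument is a genuinely different and more elementary path: it identifies $H=\tfrac12(D(\varphi)\varphi_\xi)^2+\mathscr{D}(\varphi)$ as a first integral, uses \eqref{eq:int-Df} as the compatibility condition placing both equilibria on $\{H=0\}$, checks $\mathscr{D}<0$ on $(0,1)$ from \eqref{bistablef}, and reduces existence to the convergence/divergence of the two integrals in \eqref{integrals}. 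This is in fact exactly the computation the paper performs later, in Lemma \ref{lemfinitetime} and Proposition \ref{algDecay}, to establish the finite arrival point and the decay rates; so your approach buys a self-contained existence proof that simultaneously delivers the extra structural information, at the price of being specific to $c=0$ (the dynamical-systems proof in \cite{SaMa97} handles the whole family of speeds and wave types). Two small points to tighten: (i) the quadrature only produces $\varphi$ on $(-\infty,\omega_0)$ with values in $(0,1)$; to get a solution on all of $\R$ you must extend by $\varphi\equiv 0$ for $\xi\ge\omega_0$ and verify that the glued function still satisfies \eqref{profileeqn}, which requires checking that $D(\varphi)\varphi_\xi$ and its derivative $-f(\varphi)$ both tend to $0$ as $\xi\to\omega_0^-$ (this is the $C^1$-but-not-$C^2$ matching the paper carries out in Lemma \ref{lemC3}); (ii) your necessity claim that a heteroclinic forces $\mathscr{D}(0)=\mathscr{D}(1)$ tacitly assumes $D(\varphi)\varphi_\xi\to0$ at the degenerate end, which deserves a word since for sharp fronts $\varphi_\xi$ need not vanish there — though this direction is not needed for the existence statement itself. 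Incidentally, your coefficient $\tfrac13 D'(0)f'(0)\varphi^3$ in the expansion of $\mathscr{D}$ is the correct one; the factor $\tfrac16$ in \eqref{expcalD} is a typo in the paper (compare \eqref{exp2}).
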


The authors also proved the following lemma regarding the non-existence of non-stationary traveling fronts, which is a consequence of \eqref{cVel}-\eqref{cVel-dec}. 
\begin{lemma}[\cite{SaMa97}]
\label{lemNoExis}
For $ c>0 $, system \eqref{odeSys} does not have a heteroclinic trajectory from:
\begin{itemize} 
\item $ P_0 $ to $ P_1 $, if $ \mathscr{D}(1)>0$,
\item $ P_1 $ to $ P_0 $, if $ \mathscr{D}(1)< 0$.
\end{itemize}
\end{lemma}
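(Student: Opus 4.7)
The plan is to derive a sign contradiction using the same energy identity that produced formulas \eqref{cVel}--\eqref{cVel-dec} for the wave speed, but now reading it in the opposite direction: from the existence of a heteroclinic connection we deduce an equation whose two sides have incompatible signs. I would assume, for contradiction, that $c>0$ and that system \eqref{odeSys} admits a heteroclinic orbit from $P_0=(0,0)$ to $P_1=(1,0)$. In the original variable $\xi$ (the Aronson reparametrization is harmless on the open set where $D(\varphi)>0$) this orbit corresponds to a solution $\varphi$ of the front ODE \eqref{odeFront} with $\varphi(-\infty)=0$, $\varphi(+\infty)=1$ and $v=\varphi_\xi\to 0$ at both ends.

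Next I would multiply \eqref{odeFront} by $D(\varphi)\varphi_\xi$ and integrate over $\R$, exactly as in the derivation leading to \eqref{cVel}. The first term is an exact derivative $\tfrac{1}{2}\tfrac{d}{d\xi}(D(\varphi)^2\varphi_\xi^2)$ whose boundary contributions vanish ($v\to 0$ exponentially at the hyperbolic saddle $P_1$, and at $P_0$ the factor $D(\varphi)^2\to 0$ controls the bounded $v^2$). The reaction term is handled by the chain rule,
\begin{equation*}
\int_{\R} D(\varphi)f(\varphi)\varphi_\xi\, d\xi \;=\; \mathscr{D}(\varphi(+\infty))-\mathscr{D}(\varphi(-\infty)) \;=\; \mathscr{D}(1),
\end{equation*}
so that
\begin{equation*}
c\int_{\R} D(\varphi)\varphi_\xi^2\, d\xi \;=\; -\mathscr{D}(1).
\end{equation*}
The left-hand integral is strictly positive (the profile is non-constant and $D>0$ on $(0,1]$), so the left-hand side has the sign of $c$. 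Under the hypothesis $\mathscr{D}(1)>0$, the right-hand side is negative, a contradiction with $c>0$. The second case is entirely symmetric: a heteroclinic from $P_1$ to $P_0$ gives $\varphi(-\infty)=1$ and $\varphi(+\infty)=0$, the chain-rule identity produces $-\mathscr{D}(1)$ instead, and one arrives at $c\int_{\R} D(\varphi)\varphi_\xi^2\, d\xi=\mathscr{D}(1)$, which is incompatible with $c>0$ and $\mathscr{D}(1)<0$.

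The main technical point to be justified is the vanishing of the boundary terms and the convergence of the integrals at the end corresponding to $P_0$: unlike the hyperbolic saddle $P_1$, the equilibrium $P_0$ is non-hyperbolic, so the decay of $\varphi_\xi$ there is only algebraic at best (and, in the sharp case, $\varphi$ may even reach $0$ at a finite $\xi_0$). This is controlled by the extra factor $D(\varphi)$ in both integrands, which vanishes as $\varphi\to 0$, together with the standard local analysis of the flow of \eqref{odeSys} near $P_0$ yielding boundedness of $v$ along the orbit. Once these decay and integrability details are settled, the lemma reduces entirely to the sign comparison above.
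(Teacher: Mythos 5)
Your argument is correct and is essentially the paper's own proof: the authors state that Lemma \ref{lemNoExis} is "a consequence of \eqref{cVel}--\eqref{cVel-dec}", and those identities are derived exactly as you do, by multiplying \eqref{odeFront} by $D(\varphi)\varphi_\xi$, integrating over $\R$, and reading off the sign of $c$ from the sign of $\mp\mathscr{D}(1)$. Your additional attention to the boundary terms and integrability at the non-hyperbolic end $P_0$ is a welcome refinement of a point the paper passes over silently.
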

Similarly, one can deduce a non-existence result for the case $c<0$.
\begin{corollary}\label{cor:c<0}
For $ c<0 $ system \eqref{odeSys} does not have a heteroclinic trajectory from:
\begin{itemize} 
\item $ P_0 $ to $ P_1 $, if $ \mathscr{D}(1)<0$,
\item $ P_1 $ to $ P_0 $, if $\mathscr{D}(1) > 0$.
\end{itemize}
\end{corollary}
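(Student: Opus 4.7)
The plan is to argue by contradiction, reading off the sign of $c$ from the two velocity identities \eqref{cVel} and \eqref{cVel-dec} just as was done in the proof of Lemma \ref{lemNoExis}, but now with the inequalities reversed. Concretely, suppose that system \eqref{odeSys} admits, for some $c < 0$, a heteroclinic trajectory from $P_0$ to $P_1$. Translating this orbit back to the profile equation \eqref{odeFront}, it corresponds to a monotone non-decreasing front $\varphi$ with $\varphi(-\infty) = 0$ and $\varphi(+\infty) = 1$, so that formula \eqref{cVel} applies and gives
\[
c = -K \,\mathscr{D}(1), \qquad K > 0.
\]
If in addition $\mathscr{D}(1) < 0$, then the right-hand side is strictly positive, contradicting the assumption $c < 0$. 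This rules out the first case.

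For the second case, suppose instead that, for some $c < 0$, there is a heteroclinic trajectory from $P_1$ to $P_0$. This corresponds to a monotone non-increasing front with $\varphi(-\infty) = 1$ and $\varphi(+\infty) = 0$, so formula \eqref{cVel-dec} applies:
\[
c = K \,\mathscr{D}(1), \qquad K > 0.
\]
If $\mathscr{D}(1) > 0$, the right-hand side is strictly positive, again contradicting $c < 0$.

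The only point that needs a brief check is that the constant $K$ in \eqref{cVel}-\eqref{cVel-dec} is indeed well-defined (i.e., the integral $\int_{\R} D(\varphi)\varphi_\xi^2\, d\xi$ is finite and positive) under the standing hypotheses \eqref{hypD}, \eqref{bistablef}, so that the conclusions ``$c$ and $\mathscr{D}(1)$ have opposite (resp.\ equal) signs'' are rigorously justified. This was already observed in the derivation preceding \eqref{cVel}, and it uses only boundedness of $D$ on $[0,1]$, monotonicity of $\varphi$, and the boundary values of $\varphi$; no additional obstacle is expected. Thus Corollary \ref{cor:c<0} is really just the symmetric counterpart of Lemma \ref{lemNoExis}, obtained by flipping the sign of $c$, and its proof reduces to the two contradiction arguments above.
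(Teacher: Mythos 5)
Your argument is correct, but it is not the route the paper actually writes out. The paper's proof opens by noting that the corollary \emph{can} be proved from \eqref{cVel}--\eqref{cVel-dec} (which is exactly what you do), but then carries out a different argument: it observes that the phase-plane equation \eqref{phaseTraj} is invariant under the transformation $v \to -v$, $c \to -c$, so a hypothetical heteroclinic for $c<0$ would produce, after reflection, a heteroclinic for $c>0$ of the opposite orientation, contradicting Lemma \ref{lemNoExis}. Your approach is more direct and self-contained: one reads the sign of $c$ straight off the integral identities, at the cost of having to justify that $K$ is well defined (which you rightly flag; note also that the substitution $\int_\R D(\varphi)f(\varphi)\varphi_\xi\,d\xi = \mathscr{D}(\varphi(+\infty))-\mathscr{D}(\varphi(-\infty))$ is just the fundamental theorem of calculus, so monotonicity of $\varphi$ is not actually needed there -- only the endpoint values matter). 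The paper's symmetry argument buys the reuse of Lemma \ref{lemNoExis} as a black box and makes the $c \leftrightarrow -c$ duality of the two front orientations explicit, but ultimately both proofs rest on the same sign computation, since Lemma \ref{lemNoExis} itself is a consequence of \eqref{cVel}--\eqref{cVel-dec}. Either route is acceptable.
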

\begin{proof}
Again, this result can be proved thanks to \eqref{cVel}-\eqref{cVel-dec}. 
Alternatively, one can use Lemma \ref{lemNoExis} and the fact that the ODE for trajectories on the phase plane, 
\begin{equation}\label{phaseTraj}
\frac{dv}{d\varphi} = \frac{-cv-D'(\varphi) v^2 - f(\varphi)}{D(\varphi)v},
\end{equation}
is invariant under the transformation $ v\to -v $, $ c\to -c$. 
By contradiction, suppose that $\mathscr{D}(1)<0$ and that there exists a traveling front solution to equation \eqref{degRD} connecting the states $u=0$ and $u=1$ for $c<0$.
Such a solution corresponds to a heteroclinic trajectory in the phase plane connecting $P_0 $ to $P_1$. 
Hence, there exists a solution $v_1$ to equation \eqref{phaseTraj} such that $v_1(0)=0 $ and $v_1(1)=0$.
From the invariance of equation \eqref{phaseTraj} it follows that  $v=-v_1$ is a solution to \eqref{phaseTraj} for $c>0$. 
Thus, there exists a heteroclinic trajectory connecting $ P_1 $ and $ P_0 $ for $ c>0 $ when $ \mathscr{D}(1)<0$. This  contradicts Lemma \ref{lemNoExis}. Therefore, there is no traveling front connecting $ u=0 $ and $ u=1 $ for $ c<0 $, if $ \mathscr{D}(1)<0$. A similar argument can be applied to prove that there is no traveling front connecting $u=1$ to $ u=0 $ for $ c<0$, if $ \mathscr{D}(1)>0$.
\end{proof}
Combining Lemma \ref{lemNoExis} and Corollary \ref{cor:c<0}, we conclude that
\begin{lemma}\label{lem:c=0}
The only velocity $c\in\mathbb R$ for which equation \eqref{degRD} admits both a non-decreasing and a non-increasing traveling front solution 
connecting the state $ u=0 $ to $ u=1$ and $ u=1 $ to $ u=0 $, respectively, is $ c=0 $.
\end{lemma}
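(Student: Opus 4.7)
The plan is to argue by contradiction, showing that any nonzero speed $c$ is ruled out by the sign restrictions on $\mathscr{D}(1) = \int_0^1 D(u) f(u)\, du$ provided by Lemma \ref{lemNoExis} (for $c > 0$) and Corollary \ref{cor:c<0} (for $c < 0$). The crucial observation is that the non-existence criteria on $\mathscr{D}(1)$ force the non-decreasing and non-increasing connections to require opposite signs for $\mathscr{D}(1)$, so that their simultaneous existence collapses $\mathscr{D}(1)$ to zero; the velocity formulas \eqref{cVel}--\eqref{cVel-dec} then immediately yield $c = 0$.

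Concretely, I would first dispatch the case $c > 0$. Assume for contradiction that, for some $c > 0$, equation \eqref{degRD} admits both a non-decreasing front from $u=0$ to $u=1$ and a non-increasing front from $u=1$ to $u=0$. Viewed in the phase plane of \eqref{odeSys}, these correspond to heteroclinic trajectories from $P_0$ to $P_1$ and from $P_1$ to $P_0$, respectively. By the contrapositives of the two bullets in Lemma \ref{lemNoExis}, the first forces $\mathscr{D}(1) \leq 0$ and the second forces $\mathscr{D}(1) \geq 0$, hence $\mathscr{D}(1) = 0$. But then formula \eqref{cVel} gives $c = -K \mathscr{D}(1) = 0$, contradicting $c > 0$.

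Next, I would treat the case $c < 0$ symmetrically, replacing Lemma \ref{lemNoExis} by Corollary \ref{cor:c<0}. Its two bullets now impose $\mathscr{D}(1) \geq 0$ (for the $P_0 \to P_1$ connection) and $\mathscr{D}(1) \leq 0$ (for the $P_1 \to P_0$ connection), again forcing $\mathscr{D}(1) = 0$; formula \eqref{cVel-dec} then yields $c = K \mathscr{D}(1) = 0$, contradicting $c < 0$. Combining both cases, the only remaining possibility is $c = 0$, and Theorem \ref{exisThm} confirms that under the compatibility condition $\mathscr{D}(1) = 0$ both fronts do exist at this speed.

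I do not anticipate any substantive obstacle in executing this plan, since it is essentially a short logical synthesis of results already proved in the excerpt; the only step that warrants care is the consistent translation between ``non-decreasing front from $u=0$ to $u=1$'' and ``heteroclinic from $P_0$ to $P_1$'' (and the reverse), so that the signs in \eqref{cVel} versus \eqref{cVel-dec} are applied to the correct type of connection.
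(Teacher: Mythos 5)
Your proposal is correct and follows essentially the same route as the paper, which proves the lemma simply by combining Lemma \ref{lemNoExis} and Corollary \ref{cor:c<0} (whose contrapositives force $\mathscr{D}(1)=0$, whence $c=0$ by \eqref{cVel}--\eqref{cVel-dec}); your write-up merely makes the case analysis explicit.
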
 

From now on, we focus the attention on the case $c=0$ and since the Galilean variable $ \xi$ coincides with the spatial variable $x$, 
in the sequel we shall write $x$ instead of $ \xi $ to simplify the notation. For concreteness, and without loss of generality, for the rest of the paper we will focus our analysis on the stability of the stationary traveling front $ \varphi$ with velocity $ c 
=0 $  that satisfies
 $0 \leq \varphi (x) < 1$ for $ x \in \mathbb{R}$, and 
\begin{equation}\label{boundaryCond}
\lim_{x \to \infty} \varphi(x) = u_+ :=0, \qquad \lim_{x \to -\infty} \varphi(x) = u_- :=1,
\end{equation}
that is, it connects the equilibrium state $u_- = 1$ to the other equilibrium, the degenerate end point, $u_+ = 0$. This is called an \emph{stationary invading front}. Since $c = 0$, the profile equation \eqref{odeFront} is recast as
\begin{equation}
\label{profileeqn}
\big( D(\varphi)\varphi_x \big)_x + f(\varphi) = 0.
\end{equation}
An important feature of this stationary invading front, which it is not proved in the existence theory by S\'{a}nchez-Gardu\~{n}o and Maini \cite{SaMa97}, is the fact that it arrives at the degenerate equilibrium $u_+ = 0$ at ``finite time''.

\begin{lemma}
\label{lemfinitetime}
Under hypotheses \eqref{hypD} and \eqref{bistablef}, let us suppose that condition \eqref{eq:int-Df} holds. Then, there exists a unique value $\omega_0 \in \R$ such that the stationary front connecting $u_- = 1$ with $u_+ = 0$ satisfies $\varphi(x) \equiv 0$, for all $x \in [\omega_0, \infty)$, $\varphi_x < 0$, for all $x \in (-\infty, \omega_0)$ and $0 \leq \varphi(x) < 1$, for all $x \in \R$.
\end{lemma}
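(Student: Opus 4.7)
The plan is to derive a first integral for the stationary profile equation and use it to read off the behaviour of $\varphi$ near the degenerate endpoint $u=0$. Multiplying the profile equation \eqref{profileeqn} by $D(\varphi)\varphi_x$ and integrating yields the conservation identity
\begin{equation*}
\tfrac{1}{2}\bigl(D(\varphi)\varphi_x\bigr)^2 + \mathscr{D}(\varphi) = C,
\end{equation*}
for some constant $C\in\R$. Sending $x\to-\infty$, where $\varphi\to 1$ and $\varphi_x\to 0$ (since $P_1$ is a saddle for \eqref{odeSys}), and using the standing hypothesis $\mathscr{D}(1)=0$ pins $C=0$. A sign analysis using $\mathscr{D}'=Df$ and the Nagumo conditions \eqref{bistablef} shows that $\mathscr{D}$ strictly decreases on $(0,\alpha)$ and strictly increases on $(\alpha,1)$, whence $\mathscr{D}(u)<0$ for every $u\in(0,1)$. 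Since the front is non-increasing, taking the negative square root on the set where $\varphi\in(0,1)$ gives
\begin{equation*}
\varphi_x = -\frac{\sqrt{-2\mathscr{D}(\varphi)}}{D(\varphi)} < 0,
\end{equation*}
and in particular strict monotonicity $\varphi_x<0$ there.

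Next I would analyse the asymptotics of this separated ODE at $u=0$. From \eqref{hypD} and \eqref{bistablef}, $D(u)\sim D'(0)u$ and $f(u)\sim f'(0)u$ with $D'(0)>0$ and $f'(0)<0$, hence $\mathscr{D}(u)\sim \tfrac{1}{3}D'(0)f'(0)u^3$ and consequently $D(u)/\sqrt{-2\mathscr{D}(u)}\sim k u^{-1/2}$ with $k:=\sqrt{-3D'(0)/(2f'(0))}>0$, as $u\to 0^+$. Fixing $x_1\in\R$ with $\varphi(x_1)\in(0,1)$, the separated ODE gives
\begin{equation*}
x-x_1 = \int_{\varphi(x)}^{\varphi(x_1)} \frac{D(u)}{\sqrt{-2\mathscr{D}(u)}}\, du
\end{equation*}
for every $x>x_1$ in the set $\{\varphi>0\}$. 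The integrand is comparable to $u^{-1/2}$ near $u=0$ and therefore integrable, so the right-hand side has a finite limit as $\varphi(x)\to 0^+$. This forces $\varphi$ to reach $0$ at the finite point
\begin{equation*}
\omega_0 := x_1 + \int_0^{\varphi(x_1)} \frac{D(u)}{\sqrt{-2\mathscr{D}(u)}}\, du \in \R,
\end{equation*}
and strict monotonicity makes this the unique first hitting point of $0$.

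To close the argument I would verify the extension by zero and the strict upper bound. The same asymptotic yields $\varphi_x(x)\to 0$ as $x\to\omega_0^-$, so setting $\varphi(x):=0$ for $x\geq\omega_0$ extends $\varphi$ to a globally defined, non-increasing, $C^1$ function that satisfies \eqref{profileeqn} identically on $[\omega_0,\infty)$ (both terms vanish), coinciding with the stationary front supplied by Theorem \ref{exisThm}. The bound $\varphi(x)<1$ for every $x\in\R$ follows because $\varphi(x_0)=1$ at some finite $x_0$ would combine with the quadrature identity to give $\varphi_x(x_0)=0$, and then $(1,0)$ being an equilibrium of the autonomous profile system \eqref{odeSys} with $c=0$ would force $\varphi\equiv 1$, contradicting $\varphi(+\infty)=0$. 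The main obstacle is precisely the finite-arrival assertion: it rests on the convergence of the improper integral $\int_0 D(u)/\sqrt{-2\mathscr{D}(u)}\,du$, which is a genuine consequence of the degeneracy $D(0)=0$ with $D'(0)>0$ balanced against the non-degenerate vanishing $f'(0)<0$ of the Nagumo reaction at $u=0$.
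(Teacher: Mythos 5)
Your proof is correct and follows essentially the same route as the paper: the first integral $\tfrac12\bigl(D(\varphi)\varphi_x\bigr)^2+\mathscr{D}(\varphi)=0$, separation of variables, and the finiteness of $\int_0 D(u)/\sqrt{-2\mathscr{D}(u)}\,du$ coming from the $u^{-1/2}$ singularity at the degenerate end. The only departures are minor: you make explicit the sign analysis showing $\mathscr{D}<0$ on $(0,1)$ (which the paper leaves implicit but which is needed to take the square root), and you exclude $\varphi=1$ by ODE uniqueness at the hyperbolic equilibrium $(1,0)$ rather than by the divergence of $\int_\alpha^1 D(u)/\sqrt{-2\mathscr{D}(u)}\,du$ as the paper does; both arguments are valid.
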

\begin{proof}
From Theorem \ref{exisThm} and Lemma \ref{lem:c=0} we know that $c=0$ is the only value of the velocity for which there exists a front connecting the state $u=0$ to $u=1$, or $u=1$ to $u=0$. Multiplying equation \eqref{profileeqn} by $D(\varphi)\varphi_x$ we obtain the following equation for the profile $\varphi$,
\[
	\left[\frac12\left(D(\varphi)\varphi_x\right)^2+\mathscr{D}(\varphi)\right]_x=0.
\]
Use \eqref{boundaryCond} and \eqref{eq:int-Df} to deduce
\begin{equation}
\label{eq:firstode}
	D(\varphi)\varphi_x=-\sqrt{-2\mathscr{D}(\varphi)},
\end{equation}
and, solving by separation of variables, we infer that the profile $\varphi$ is implicitly defined by
\[
	\int_{\varphi(0)}^{\varphi(x)}\frac{D(s)}{\sqrt{-2\mathscr{D}(s)}}\,ds=-x, \qquad \qquad \varphi(0)\in(0,1).
\]
Since we are looking for a profile $\varphi$ connecting the equilibria $0$ and $1$, it is crucial to study the convergence of the integrals
\begin{equation}
\label{integrals}
	\int_{0}^{\alpha}\frac{D(s)}{\sqrt{-2\mathscr{D}(s)}}\,ds \qquad \mbox{ and } \qquad \int_{\alpha}^1\frac{D(s)}{\sqrt{-2\mathscr{D}(s)}}\,ds.
\end{equation}
Indeed, these two integrals tell us whether the profile $\varphi$ arrives at $0$ or $1$ at some finite point 
(meaning that $\varphi(\omega_0) = 0$ or $\varphi(\omega_1)=1$ for some $\omega_0,\omega_1\in\mathbb{R}$)
or $0<\varphi(x)<1$ for any $x\in\mathbb{R}$.
The assumptions on $D$ and $f$ (conditions \eqref{hypD} and \eqref{bistablef}) allow us to determine the convergence of the integrals \eqref{integrals}:
since $D(1)>0$, $f'(1)<0$ and 
\[
\mathscr{D}(s)=\frac12D(1)f'(1)(s-1)^2+R_2(s), \quad  \mbox{ with } \quad R_2(s)=o(|s-1|^2), \;\; s\to1,
\]
where we used $\mathscr{D}(1)=\mathscr{D}'(1)=D(1)f(1)=0$, we have
\[
\int_{\alpha}^1\frac{D(s)}{\sqrt{-2\mathscr{D}(s)}}\,ds\sim\int_{\alpha}^1\frac{ds}{1-s}=\infty.
\]
This implies that the profile function $\varphi$ never ``touches'' the value $1$ and that $\varphi(x) < 1$ for all $x \in \R$ (formally speaking, that $\omega_1 = -\infty$). On the other hand, since $D'(0)>0$ and $f'(0)<0$, we have $D(s)=D'(0)s+R_1(s)$ with $R_1(s)=o(|s|)$, as $s\to0$ and 
\begin{equation}\label{expcalD}
\mathscr{D}(s)=\frac16D'(0)f'(0)s^3+R_3(s), \quad  \mbox{ with } \quad R_3(s)=o(|s|^3), \;\; s \to 0.
\end{equation}
As a consequence,
\[
\int_0^{\alpha}\frac{D(s)}{\sqrt{-2\mathscr{D}(s)}}\,ds\sim\int_0^{\alpha}\frac{ds}{\sqrt{s}}<\infty.
\]
Hence, we can conclude that assumptions \eqref{hypD} and \eqref{bistablef} imply that there exists $\omega_0\in\mathbb{R}$ such that
$\varphi(\omega_0)=0$ and $0\leq\varphi(x)<1$, for any $x\in\mathbb{R}$.
\end{proof}
\begin{example}
Consider equation \eqref{degRD} with the diffusion coefficient \eqref{Dbeta} and reaction term \eqref{cubicf}. 
The condition \eqref{eq:int-Df} implies that the parameters $ \alpha $ and $ b $  are related as follows
\[
\alpha(b)= \frac{3b+2}{5b+3}.
\]
If we set $b=1$, then $\alpha = 5/8$. 
Thus, a direct calculation shows that
\[
\mathscr{D}(\varphi) = \int_{0}^{\varphi} D(u) f(u) du = -\frac{\varphi^3}{24}(\varphi-1)^2(4\varphi+5) \leq 0,
\]
for $ \varphi \in [0,1] $. Notice that $\mathscr{D}(1) = 0$ and therefore there exists a stationary front.
Clearly $ (-\mathscr{D}(\varphi))^{1/2} $ is well defined for $\varphi \in (0,1)$. 
The integrals in \eqref{integrals} become
\begin{equation*}
	\int_{0}^{\frac58}\frac{\sqrt{12}(s+1)}{(1-s)\sqrt{s(4s+5)}}\,ds\approx3.8017, \qquad \mbox{ and } \qquad \int_{\frac58}^1\frac{\sqrt{12}(s+1)}{(1-s)\sqrt{s(4s+5)}}\,ds=\infty.
\end{equation*}
Equation \eqref{eq:firstode} becomes 
\begin{equation}
\label{newODE}
	(\varphi+1)\varphi_x=-\sqrt{\frac{\varphi}{12}(\varphi-1)^2(4\varphi+5)},
\end{equation}
and so, we have
\begin{equation*}
	\varphi_x \sim -\sqrt{\frac{5}{12}\varphi},
\end{equation*}
as $ \varphi \to 0^{+} $. 
Solving by separation of variables, we see that
\[
\varphi(x) \sim \Big(\sqrt{\varphi(0)} - \sqrt{\frac{5}{48}}x\Big)^2,
 \]
on the degenerate side, that is, when $x \to \omega_0^-$. For the other rest state we have the approximated ODE, 
\[ 
\varphi_x \sim \frac{\sqrt3}{4}(\varphi-1),
 \]
as $ \varphi \to 1^{-} $. Thus, 
\[ 
\varphi(x) \sim 1-(1-\varphi(0))e^{\frac{\sqrt{3}}{4} x},
 \]
on the non-degenerate side as $x \to 1^-$. 

Figure \ref{figstatNagumo} depicts the numerical approximation to the solution to equation \eqref{newODE}. In order to avoid the singularity of the diffusion at $\varphi = 0$, it is more convenient to solve numerically the equation
\[
\frac{dx}{d\varphi} = - \frac{\sqrt{12} (\varphi + 1)}{\sqrt{\varphi(\varphi-1)^2(4\varphi + 5)}},
\]
 with $x(\tfrac{1}{2}) = 0$ (or equivalently, when $\varphi(0) = \tfrac{1}{2}$) in the interval $\varphi \in (10^{-6}, 1- 10^{-6})$. The solution was calculated using a standard ODE integrator of implicit Runge-Kutta type. This yields the stationary monotone profile equation which is decreasing, connecting $\varphi = 1$ to $\varphi = 0$ as $x \to \infty$. From the numerical calculation it can be estimated that the ``arrival'' value for $x = \omega_0$ is approximately $\omega_0 \approx$ 2.92089.

 \begin{figure}[t]
\begin{center}
\includegraphics[scale=0.6]{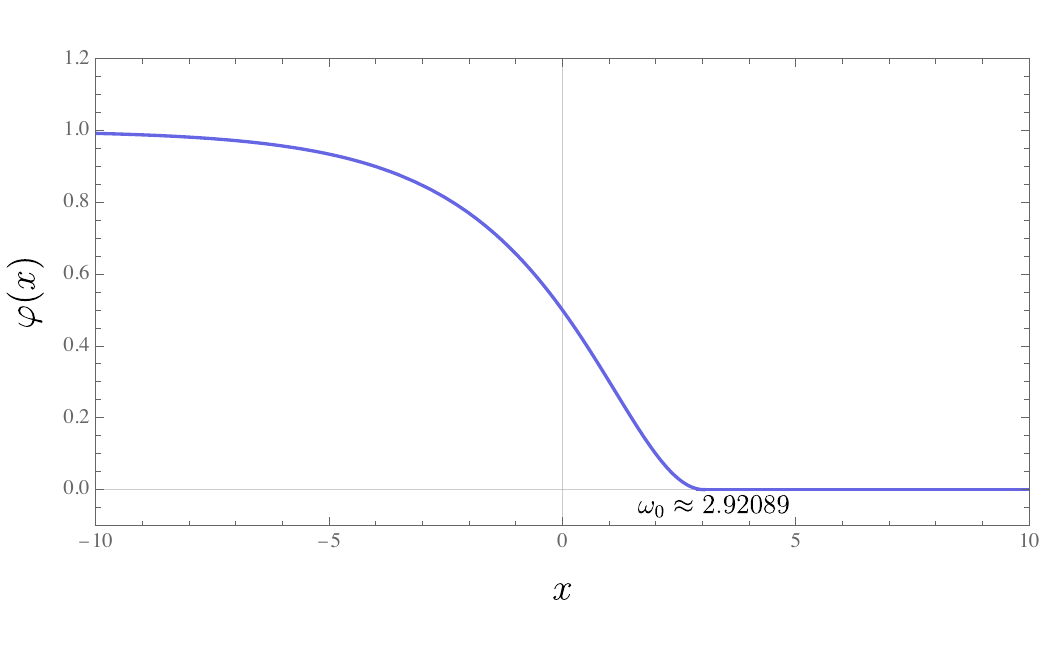}
\caption{\small{Numerical approximation of the profile of the decreasing stationary Nagumo front (in blue), connecting $\varphi = 1$ and $\varphi=0$. 
If we set $\varphi(0) = \tfrac{1}{2}$ the profile arrives to the degenerate state approximately at $x = \omega_0 \approx $ 2.92089. This connection is actually of class $C^1$ but fails to be of class $C^2$ resulting into a non-smooth profile such that $\varphi_x \notin H^2(\R)$ (color online). }
}
\label{figstatNagumo}
\end{center}
\end{figure}
 
\end{example}

This last example is very illustrative. Note that the decay of the profile to the degenerate equilibrium is algebraic, whereas it exhibits exponential decay at the non-degenerate side. As expected, the asymptotic behaviour of the front $ \varphi $ is determined by the hyperbolicity of the equilibria of system \eqref{odeSys}. Since $P_0$ is a non-hyperbolic point, the front decays to $u=0$ algebraically. On the other hand $ P_1 $ is hyperbolic and hence the front approaches $u=1$ exponentially. More precisely, we have the following result.

\begin{proposition}\label{algDecay}
The traveling front solution $\varphi$ to equation \eqref{degRD} satisfying \eqref{boundaryCond} has the following asymptotic behavior
\begin{equation}
\label{asintotasN}
\begin{aligned}
|\partial^j_x (\varphi(x) -1)| &= O(e ^{\eta x}),\quad as \; x \to -\infty, \;\; j=0,1, \\
\end{aligned}
\end{equation}
where $\eta = \sqrt{-f'(1)/D(1)} > 0$, and
\begin{equation}
\label{decaysN}
\begin{aligned}
|\partial^j_x \varphi(x)| &= O(|\omega_0 - x|^{2+j}),\quad as \; x \to \omega_0^-,  \;\; j=0,1. \\
\end{aligned}
\end{equation}
\end{proposition}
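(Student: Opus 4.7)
The plan is to derive both asymptotics directly from the first-order reduction \eqref{eq:firstode} established in the proof of Lemma \ref{lemfinitetime}, namely $D(\varphi)\varphi_x = -\sqrt{-2\mathscr{D}(\varphi)}$ on the set where $\varphi > 0$. Since this single scalar ODE encodes all the required information, the analysis reduces to careful Taylor expansions of $\mathscr{D}$ and $D$ at each endpoint followed by integration by separation of variables.

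For the exponential decay as $x \to -\infty$, I would first expand
\[
\mathscr{D}(\varphi) = \tfrac{1}{2} D(1) f'(1)(\varphi-1)^2 + o((\varphi-1)^2), \qquad \varphi \to 1,
\]
using $\mathscr{D}(1) = 0$ and $\mathscr{D}'(1) = D(1) f(1) = 0$, and note that $D(\varphi) \to D(1) > 0$. Inserting these into \eqref{eq:firstode} yields
\[
\varphi_x \sim -\eta\,(1-\varphi), \qquad \eta = \sqrt{-f'(1)/D(1)} > 0,
\]
which by separation of variables gives $1-\varphi(x) \sim C e^{\eta x}$ as $x \to -\infty$ for some constant $C > 0$. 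The estimate for the first derivative then follows by re-substitution into the same relation.

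For the behavior as $x \to \omega_0^-$, I would combine the expansion \eqref{expcalD} with $D(\varphi) \sim D'(0)\varphi$. Substituting into \eqref{eq:firstode} produces $D'(0)\varphi\,\varphi_x \sim -\sqrt{-\tfrac{1}{3}D'(0) f'(0)\,\varphi^{3}}$, which simplifies to
\[
\bigl(\sqrt{\varphi}\bigr)_x \sim -\mu/2, \qquad \mu = \sqrt{-f'(0)/(3 D'(0))} > 0.
\]
Integrating and invoking $\varphi(\omega_0) = 0$ gives the algebraic profile $\varphi(x) \sim (\mu/2)^{2} (\omega_0 - x)^{2}$ as $x\to\omega_0^-$, from which the estimate for $\varphi$ follows; substituting this asymptotic back into \eqref{eq:firstode} (or differentiating the quadratic leading behavior) delivers the companion estimate for $\varphi_x$.

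The main obstacle is to upgrade the formal asymptotic equivalences $\sim$ into rigorous $O$-bounds uniformly on neighborhoods of the endpoints. Near $x=-\infty$ one could alternatively invoke the stable manifold theorem for the desingularized system \eqref{odeSys} at the hyperbolic saddle $P_1$, whose linearization has eigenvalues $\pm\sqrt{-D(1) f'(1)}$ in the $\tau$-variable; the Aronson time change $d\tau/dx = 1/D(\varphi)$ then produces exactly the rate $\eta$ in the original variable. The degenerate equilibrium $P_0$, however, is non-hyperbolic, so the algebraic decay cannot be obtained through a linearization argument and must genuinely rely on the reduced scalar ODE \eqref{eq:firstode} together with controlled remainder estimates for $\mathscr{D}$ and $D$ near $\varphi = 0$.
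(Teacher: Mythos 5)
Your proposal is correct in substance and, on the degenerate side, follows essentially the same route as the paper: both arguments start from the first integral \eqref{eq:firstode}, expand $D$ and $\mathscr{D}$ at $\varphi=0$ to obtain $\varphi_x\sim -\mathrm{const}\cdot\varphi^{1/2}$, and integrate; your reduction to $(\sqrt{\varphi})_x\sim-\mu/2$ is a slightly leaner way to integrate than the paper's explicit $\tanh^2$ solution of the truncated equation, but the content is identical. (Your constant $\mu$ differs from the paper's $a_0=\sqrt{-\tfrac23 f'(0)/D'(0)}$ only because you took the coefficient $\tfrac16$ in \eqref{expcalD} at face value; the correct coefficient is $\tfrac13$, as in \eqref{exp2}, but this is immaterial for an $O$-estimate.) On the non-degenerate side your primary argument is genuinely different: the paper linearizes the planar system \eqref{singODE} at the hyperbolic saddle $P_1$ and invokes standard ODE estimates to extract the rate $\eta$, whereas you read the same rate off the scalar first integral via $\varphi_x\sim-\eta(1-\varphi)$. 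Your route has the advantage of being self-contained and of treating both endpoints by one and the same mechanism; the paper's route is the more standard one and delivers the sharp exponential rate with no extra work, since hyperbolicity of $P_1$ immediately controls the error terms. You correctly identify that the only real work in either approach is upgrading $\sim$ to uniform $O$-bounds, and your remedies (stable manifold theorem at $P_1$; integrable remainder control for the scalar ODE near $\varphi=0$, where linearization is unavailable) are exactly what is needed --- the paper itself does no more, deferring to ``standard results of asymptotic analysis.''

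One point you should not gloss over: for $j=1$ your argument (and the paper's) yields $\varphi_x\sim -2c\,(\omega_0-x)$ from $\varphi\sim c\,(\omega_0-x)^2$ with $c>0$, i.e.\ $|\varphi_x|=O(|\omega_0-x|)$, \emph{not} $O(|\omega_0-x|^{3})$ as the displayed exponent $2+j$ in \eqref{decaysN} would demand. Indeed $|\varphi_x|=O(|\omega_0-x|^{3})$ would force $\varphi_{xx}(\omega_0^-)=0$, contradicting the limit $\varphi_{xx}\to-\tfrac13 f'(0)/D'(0)>0$ computed in Lemma \ref{lemC3}. The exponent in \eqref{decaysN} should read $2-j$; your proof establishes that corrected statement, and so does the paper's.
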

\begin{proof}
To verify the exponential decay on the non-degenerate side, note that the equilibrium point $P_1 = (1,0)$ is hyperbolic because the diffusion coefficient is not degenerate at $u=1$. The linearization of system \eqref{singODE} around $P_1$, written for $c = 0$ as 
\[
\frac{d}{dx} \begin{pmatrix} \varphi \\ v \end{pmatrix} = \mathbf{g}(\varphi, v) := \begin{pmatrix} v \\ - D(\varphi)^{-1}(D'(\varphi)v^2 + f(\varphi))\end{pmatrix},
\]
is given by
\[
(D_{(\varphi,v)}\mathbf{g})_{|(1,0)} = \begin{pmatrix}
0 & 1 \\ -f'(1)/D(1) & 0
\end{pmatrix},
\]
with a positive eigenvalue $\eta = \sqrt{-f'(1)/D(1)}$ (the decaying mode as $x \to -\infty$). By standard ODE estimates we deduce that $\varphi$ and $\varphi_x$ behave asymptotically as
\[
\varphi \sim C_0 e^{-\eta |x|}, \quad \varphi_x \sim - C_1 e^{-\eta|x|},
\]
as $x \to -\infty$ for some uniform constants $C_0, C_1 > 0$. This shows the exponential decay rate \eqref{asintotasN} at $x = -\infty$.

In order to examine the asymptotic behavior at the degenerate side, let us proceed as in the proof of Lemma \ref{lemfinitetime}.
First, from \eqref{eq:firstode}, it follows that
\[
\varphi_x= - \frac{\sqrt{- 2 \mathscr{D}(\varphi)}}{D(\varphi)},
\]
for $\varphi \in (0,1)$. 
Next, in order to find the rate of decay we make the asymptotic expansions around $\varphi = 0$ (see \eqref{expcalD}):
\begin{equation}
\label{exp2}
\begin{aligned}
D(\varphi) &= D'(0)\varphi + \tfrac{1}{2} D''(0) \varphi^2 + O(\varphi^3),\\
\mathscr{D}(\varphi) &= \tfrac{1}{3} D'(0) f'(0) \varphi^3 + O(\varphi^4).
\end{aligned}
\end{equation}
Upon substitution,
\begin{equation}
\label{exp1}
\begin{aligned}
\varphi_x &= - \frac{\sqrt{2}}{\varphi} \Big( D'(0) + \tfrac{1}{2} D''(0) \varphi + O(\varphi^2)\Big)^{-1} \Big( \tfrac{1}{3} D'(0) f'(0) \varphi^3 + O(\varphi^4)\Big)^{1/2}\\&= - a_0 \varphi^{1/2} + b_0 \varphi^{3/2} + O(\varphi^{5/2}),
\end{aligned}
\end{equation}
with $a_0 := \sqrt{- \tfrac{2}{3}f'(0)/D'(0)} > 0$ and some $b_0 > 0$. Normalizing $\varphi \to \varphi/a_0^2$ we obtain
\[
\varphi_x = - \varphi^{1/2} + b \varphi^{3/2} + O(\varphi^{5/2}),
\]
as $\varphi \to 0^+$ with $b = a_0 b_0 > 0$. By standard results of asymptotic analysis (cf. Bender and Orzag \cite{BeOr78}) we conclude that $\varphi$ behaves asymptotically like the solution $\widetilde{\varphi}$ to the equation
\[
\widetilde{\varphi}_x = - \widetilde{\varphi}^{1/2} + b \widetilde{\varphi}^{3/2},
\]
as $x \to \omega_0^-$. Integrating this equation by separation of variables one obtains
\[
\log \left( \frac{1/b + \widetilde{\varphi}^{1/2}}{1/b - \widetilde{\varphi}^{1/2}}\right) = \tfrac{1}{2}(\omega_0 -x).
\]
Solving for $\widetilde{\varphi}$ we get,
\[
\widetilde{\varphi}^{1/2} = \frac{1}{b} \frac{(e^{\tfrac{1}{2}(\omega_0 -x)}-1)}{(e^{\tfrac{1}{2}(\omega_0 -x)}+1)} \to 0^+, \quad \text{as } \; x \to \omega_0^-.
\]
Thus,
\[
\varphi \sim \widetilde{\varphi} = \frac{1}{b^2} \tanh^2 \left( \tfrac{1}{4}(\omega_0 -x)\right) = \frac{(\omega_0 - x)^2}{4b^2} + O((\omega_0 - x)^{4}),
\]
as $x \to \omega_0^-$. We then deduce the asymptotic behavior \eqref{decaysN} when $x \to \omega_0^-$ (use L'H\^opital's rule, for instance, to show the decay of the derivatives). This yields the desired algebraic decay rate on the degenerate side.
\end{proof}

We finish this section by examining the regularity of the fronts. 
Moreover, we also prove an auxiliary lemma that guarantees the boundedness of a certain coefficient, which will be used later in the paper.
\begin{lemma}[regularity]
\label{lemC3}
Under assumptions \eqref{hypD} and \eqref{bistablef}, the stationary Na\-gu\-mo front satisfies $\varphi \in C^1(\R)$ and it is of class $C^2(\R)$ except at $x = \omega_0$. Moreover, $\varphi_x \in H^1(\R)$, but $\varphi_x \notin H^2(\R)$.
\end{lemma}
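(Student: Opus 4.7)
The plan is to split $\R$ into $(-\infty, \omega_0)$, $\{\omega_0\}$ and $(\omega_0, \infty)$, establish classical regularity on each open piece, and then match at $\omega_0$ via Proposition \ref{algDecay}. On $(\omega_0, \infty)$ Lemma \ref{lemfinitetime} gives $\varphi \equiv 0$, which is trivially smooth. On $(-\infty, \omega_0)$, continuity of $\varphi$ and $0 < \varphi(x) < 1$ imply that $D(\varphi(x))$ is bounded below by a positive constant on each compact subinterval, so I would solve the profile equation \eqref{profileeqn} algebraically for the second derivative,
\[
\varphi_{xx} = -\frac{D'(\varphi)\varphi_x^2 + f(\varphi)}{D(\varphi)},
\]
and invoke a standard ODE bootstrap with $D, f \in C^2$ to obtain $\varphi \in C^2(-\infty, \omega_0)$. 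The $C^1$ matching across $\omega_0$ is then immediate from Proposition \ref{algDecay}, which yields $\varphi(\omega_0^-) = \varphi_x(\omega_0^-) = 0$, agreeing with the right-hand values.

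To show $\varphi \notin C^2(\R)$, I would compute the left limit of $\varphi_{xx}$ at $\omega_0$ and check that it is nonzero. Using the expansions \eqref{exp2}-\eqref{exp1} one has $D(\varphi) = D'(0)\varphi + O(\varphi^2)$, $f(\varphi) = f'(0)\varphi + O(\varphi^2)$, and, from $\varphi_x = -a_0 \varphi^{1/2} + O(\varphi^{3/2})$ with $a_0^2 = -\tfrac{2}{3}f'(0)/D'(0)$, the relation $\varphi_x^2 = -\tfrac{2}{3}(f'(0)/D'(0))\varphi + O(\varphi^2)$. Substituting into the bootstrap formula gives
\[
\lim_{x \to \omega_0^-} \varphi_{xx}(x) = -\frac{f'(0)}{3 D'(0)} > 0,
\]
the strict positivity coming from $f'(0) < 0$ and $D'(0) > 0$. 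Since $\varphi_{xx} \equiv 0$ on $(\omega_0, \infty)$, this produces a genuine jump and rules out $C^2$ regularity across $\omega_0$.

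Finally, I would verify the Sobolev statements. The exponential decay \eqref{asintotasN} at $-\infty$, continuity of $\varphi_x$ on $\R$, and $\varphi_x \equiv 0$ on $[\omega_0, \infty)$ together give $\varphi_x \in L^2(\R)$. The bootstrap formula and the asymptotic expansion show that $\varphi_{xx}$ is bounded in a neighborhood of $\omega_0$ from the left, vanishes on $(\omega_0, \infty)$ and decays exponentially at $-\infty$, so $\varphi_{xx} \in L^2(\R)$; since $\varphi_x$ is absolutely continuous on $\R$ (being continuous and Lipschitz near $\omega_0$) with pointwise-a.e. derivative $\varphi_{xx}$, this pointwise derivative is also the distributional one, and hence $\varphi_x \in H^1(\R)$. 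The failure of $H^2$ regularity comes from the jump computed above: the distributional derivative of $\varphi_{xx}$ contains a nonzero Dirac mass at $\omega_0$, equal in weight to the jump, and no Dirac mass lies in $L^2(\R)$, so $\varphi_x \notin H^2(\R)$. The only delicate step of the whole argument is pinning down the nonzero value of $\lim_{x \to \omega_0^-} \varphi_{xx}$; every other conclusion follows by routine ODE manipulations or by direct integration of the decay rates in Proposition \ref{algDecay} once that limit is identified.
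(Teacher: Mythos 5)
Your proposal is correct and follows essentially the same route as the paper: classical ODE regularity away from $\omega_0$, the one-sided limit $\varphi_{xx}(\omega_0^-) = -f'(0)/(3D'(0)) > 0$ computed from the expansions \eqref{exp2}--\eqref{exp1}, and the resulting jump in $\varphi_{xx}$ as the obstruction to $C^2$ and $H^2$ regularity. Your final step, identifying the Dirac mass in the distributional derivative of $\varphi_{xx}$ as the precise reason $\varphi_x \notin H^2(\R)$, is in fact spelled out more explicitly than in the paper's rather terse closing sentence, but it is the same argument.
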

\begin{proof}
Since $D \in C^2$, $f \in C^2$ and $(\varphi,v) = (\varphi, \varphi_x)$ is a solution to a nonlinear autonomous system of the form
\[
\frac{d}{dx} \begin{pmatrix} \varphi \\ \varphi_x \end{pmatrix} = \mathbf{g}(\varphi, \varphi_x),
\]
with $\mathbf{g} = \mathbf{g}(\varphi,v)$ of class $C^1$ in $(\varphi,v)$, then $\varphi$ is at least of class $C^2$ for $x \in (-\infty, \omega_0)$. Since $\varphi(x) \equiv 0$ for $x \in (\omega_0, \infty)$ we only need to verify the behavior at $x = \omega_0$. It is clear that $\varphi \in C(\R)$. From \eqref{exp1}, 
we readily see that $\lim_{x \to \omega_0^-} \varphi_x(x) = 0$, so that $\varphi \in C^1(\R)$. 
However, using the profile equation one obtains
\begin{equation*}
	\varphi_{xx} = - \frac{f(\varphi)}{D(\varphi)} - \frac{D'(\varphi) \varphi_x^2}{D(\varphi)},
\end{equation*}
for all $x \in (-\infty, \omega_0)$, where $D(\varphi) > 0$. To compute the limit when $x \to \omega_0^-$ we use the asymptotic behaviour of the profile near zero specified by \eqref{exp2} and \eqref{exp1}. For instance, substituting \eqref{exp2} we get
\[
\frac{f(\varphi)}{D(\varphi)} = \frac{f'(0) \varphi + O(\varphi^2)}{D'(0) \varphi + O(\varphi^2)} \to \frac{f'(0)}{D'(0)} <0,
\]
when $\varphi \to 0^+$ or, equivalently, when $x \to \omega_0^-$. Likewise, from \eqref{exp1} we arrive at
\[
\frac{D'(\varphi) \varphi_x^2}{D(\varphi)} = \frac{\big( D'(0) + O(\varphi) \big) \big( -a_0 \varphi^{1/2} + O(\varphi^{3/2})\big)^2}{D'(0) \varphi + O(\varphi^2)} \to a_0^2 = - \frac{2}{3} \frac{f'(0)}{D'(0)} > 0,
\]
as $\varphi \to 0^+$. Therefore we obtain
\[
\varphi_{xx} \to - \frac{1}{3} \frac{f'(0)}{D'(0)} > 0,
\]
as $x \to \omega_0^-$. Since $\varphi_{xx}$ is continuous everywhere except at $x= \omega_0$ we conclude that $\varphi \in C^2 ((-\infty,\omega_0) \cup (\omega_0, \infty))$.

The fact that $\varphi_x \in L^2(\R)$ follows immediately from being $\varphi$ of class $C^1(\R)$ for all $x \in \R$, $\varphi_x \equiv 0$ for all $x \in (\omega_0, \infty)$ and from the exponential decay of $\varphi_x$ to zero as $x \to - \infty$. However, $\varphi_{xx}\in L^2(\R)$ is discontinuous at $x = \omega_0$.
\end{proof}

\begin{remark}
\label{remnotsharp}
It is to be observed that the stationary fronts are smooth enough (at least of class $C^1$) and, as Sanchez-Gardu\~no and Maini \cite{SaMa97} point out, they are \emph{not sharp}, even though they arrive at the degenerate equilibrium point at a finite value $x = \omega_0 \in \R$. \end{remark}


\begin{lemma}
\label{lemauxi}
For the stationary Nagumo front under consideration there exists a uniform constant $C > 0$ such that
\[
\sup_{x \in \R} \left| D(\varphi) \frac{\varphi_{x x}}{\varphi_{x}}\right| \leq C.
\]
\end{lemma}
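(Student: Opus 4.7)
The plan is to start from the profile equation \eqref{profileeqn}, which expands to
\[
D'(\varphi)\varphi_x^2 + D(\varphi)\varphi_{xx} + f(\varphi) = 0,
\]
so that, on the open set $\{\varphi_x\neq 0\} = (-\infty,\omega_0)$, we can divide by $\varphi_x$ and obtain the identity
\[
D(\varphi)\frac{\varphi_{xx}}{\varphi_x} = -D'(\varphi)\varphi_x - \frac{f(\varphi)}{\varphi_x}.
\]
On the complementary set $[\omega_0,\infty)$ the profile is identically zero and the expression is vacuous (or naturally extended by $0$), so it suffices to bound the right-hand side on $(-\infty,\omega_0)$.

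The first summand $-D'(\varphi)\varphi_x$ is clearly bounded: $\varphi$ takes values in $[0,1]$, $D'\in C^1([0,1])$ by \eqref{hypD}, and $\varphi_x$ is continuous on $\R$ with $\varphi_x\to 0$ exponentially at $-\infty$ by \eqref{asintotasN} and $\varphi_x\to 0$ as $x\to\omega_0^-$ by \eqref{decaysN}. The main work is the second summand $f(\varphi)/\varphi_x$, which is continuous on $(-\infty,\omega_0)$ (since $\varphi_x<0$ there and $f$ is $C^2$), so by elementary topology it suffices to show that it admits finite limits at both endpoints; the global bound then follows by continuity on the compactified interval $[-\infty,\omega_0]$.

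At $x\to-\infty$, Taylor-expanding $f$ at the rest state $u_-=1$ and using the exponential decay rate from Proposition \ref{algDecay}, I would write $f(\varphi)\sim f'(1)(\varphi-1)$ with $\varphi-1\sim C_0 e^{\eta x}$ and $\varphi_x\sim -C_0\eta\, e^{\eta x}$ where $\eta=\sqrt{-f'(1)/D(1)}$, which yields
\[
\frac{f(\varphi)}{\varphi_x}\longrightarrow -\frac{f'(1)}{\eta} = \sqrt{-f'(1)D(1)}.
\]
At $x\to\omega_0^-$, I would use the expansions \eqref{exp2}–\eqref{exp1} near the degenerate equilibrium: $f(\varphi)=f'(0)\varphi + O(\varphi^2)$ while $\varphi_x\sim -a_0\varphi^{1/2}$ with $a_0>0$, so
\[
\frac{f(\varphi)}{\varphi_x} \sim -\frac{f'(0)}{a_0}\varphi^{1/2} \longrightarrow 0,
\]
as $\varphi\to 0^+$, i.e., as $x\to\omega_0^-$. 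Combining these limits with the continuity of $f(\varphi)/\varphi_x$ on $(-\infty,\omega_0)$ gives a uniform bound, which, together with the bound on $-D'(\varphi)\varphi_x$, yields the desired constant $C>0$.

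The only subtle step is the degenerate limit $x\to\omega_0^-$, where both $D(\varphi)$ in the numerator and $\varphi_x$ in the denominator vanish; however, the expansion \eqref{exp1} quantifies precisely the rate $\varphi_x\sim-a_0\varphi^{1/2}$, which is of lower order than the linear vanishing of $f(\varphi)$, so the quotient in fact tends to zero rather than blowing up. All necessary asymptotic information has already been established in Proposition \ref{algDecay} and Lemma \ref{lemC3}, so no new estimates are required.
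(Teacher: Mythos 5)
Your proof is correct and follows essentially the same route as the paper: rewrite $D(\varphi)\varphi_{xx}/\varphi_x = -f(\varphi)/\varphi_x - D'(\varphi)\varphi_x$ via the profile equation and check that the limits at $x\to-\infty$ and $x\to\omega_0^-$ exist, using the exponential decay at the hyperbolic end and the expansion $\varphi_x\sim -a_0\varphi^{1/2}$ at the degenerate end. The only blemish is a sign inconsistency at $-\infty$ (since $\varphi<1$ and $\varphi_x<0$ one needs $\varphi-1\sim -C_0e^{\eta x}$, giving $f(\varphi)/\varphi_x\to -\sqrt{-f'(1)D(1)}$ rather than $+\sqrt{-f'(1)D(1)}$), which is immaterial for the absolute-value bound.
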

\begin{proof}
Since $\varphi_x < 0$ for all $x \in (-\infty, \omega_0)$, $\varphi = \varphi_x \equiv 0$ for all $x \geq \omega_0$ and the argument of the supremum as a  continuous function of $x \in \R$, it suffices to show that the limits,
\[
\lim_{x \to \omega_0^-} \left| D(\varphi) \frac{\varphi_{x x}}{\varphi_{x}}\right| , \quad \text{and } \quad \lim_{x \to -\infty} \left| D(\varphi) \frac{\varphi_{x x}}{\varphi_{x}}\right|,
\]
do exist. For that purpose, we use the profile equation \eqref{profileeqn} to write
\[
D(\varphi) \frac{\varphi_{x x}}{\varphi_{x}} = - \frac{f(\varphi)}{\varphi_x} - D'(\varphi) \varphi_x.
\]
Clearly, $D'(\varphi) \varphi_x \to 0$ as $x\to-\infty$ or $x \to \omega_0^-$.
On the non-degenerate side, one has 
$$\lim_{x\to-\infty}- \frac{f(\varphi)}{\varphi_x} = \sqrt{-f'(1)D(1)}>0.$$
On the other hand, from \eqref{bistablef} and \eqref{exp1}, the behavior at the degenerate side is given by 
$$- f(\varphi) / \varphi_x \sim -f'(0)\sqrt{\varphi} + O(\varphi^2) \to 0$$ as $x \to \omega_0^-$. The lemma is proved.
\end{proof}

\section{Perturbation equations and the spectral stability problem}
\label{sec:linearized}

\subsection{Preliminaries}
First, let us recall some standard concepts from the theory of linear operators (cf. \cite{Kat80,EE87}). Let $X$ and $Y$ be Banach spaces and let $\cL$ and $\cT$ be linear operators from $X$ to $Y$, with domains $\cD(\cL)$ and $\cD(\cT)$, respectively. If $\cD(\cT) \subset \cD(\cL)$ and $\cL u = \cT u$, for all $u \in \cD(\cT)$, then $\cL$ is called an extension of $\cT$ (denoted as $\cT \subset \cL$). An operator $\cT$ is bounded if $\| \cT u \| \leq M \| u \|$, for all $u \in \cD(\cT)$ and some uniform constant $M > 0$. $\cT$ is closed if, for every sequence $u_n \in \cD(\cT)$ such that $u_n \to u$ in $X$ and $\cT u_n$ is a Cauchy sequence in $Y$, then necessarily $u \in \cD(\cT)$ and $\cT u = \lim \cT u_n$, as $n \to \infty$. An operator is closable if it has a closed extension. 

Let $\ccC(X,Y)$ and $\ccB(X,Y)$ denote the sets of all closed and bounded linear operators from $X$ to 
$Y$, respectively. For any $\cL \in \ccC(X,Y)$, we denote its domain as $\cD(\cL) \subseteq X$ and its range 
as $\cR(\cL) = \cL (\cD(\cL)) \subseteq Y$. We say $\cL$ is densely defined if $\overline{\cD(\cL)} = X$. Let $\cL \in \ccC(X,Y)$ be a closed, densely defined operator;
its \textit{resolvent}, $\rho(\cL)$, is defined as the set of all complex numbers $\lambda \in \C$ such that $\cL - \lambda$ is injective, $\cR(\cL - \lambda) = Y$ and $(\cL - \lambda)^{-1}$ is bounded. The \textit{spectrum} of $\cL$ is defined as $\sigma(\cL) := \C \backslash \rho(\cL)$. 

In the analysis of stability of nonlinear waves (cf. \cite{KaPro13, San02}) the spectrum is often partitioned into essential
and isolated point spectrum. This definition is originally due to Weyl \cite{We10}, see also \cite{EE87,Kat80,KaPro13}.

\begin{definition}[Weyl's partition of spectrum]
\label{defsigmaone}
Let $\cL \in \ccC(X,Y)$ be a closed, dense\-ly defined linear operator.
We define its \textit{isolated point spectrum} and its \textit{essential spectrum}, as the sets
\[
\begin{aligned}
\iptsp(\cL) &:= \{ \lambda \in \C \, : \, \cL - \lambda \,\text{ is Fredholm with index zero and
non-trivial kernel} \}, \; \text{and}\\
\ess(\cL) &:= \{ \lambda \in \C \,: \, \cL - \lambda \text{ is either not Fredholm, or has index different 
from zero} \},
\end{aligned}
\]
respectively. 
\end{definition}

\begin{remark} 
Let us recall that an operator $\mathcal{L} \in \ccC(X,Y)$ is Fredholm if its 
range $\mathcal{R(L)}$ is closed, and both its nullity, $\nul\mathcal{L} = \dim \ker \mathcal{L}$, and 
its deficiency, $\mathrm{def} \,\mathcal{L} = \mathrm{codim} \, \mathcal{R(L)}$, are finite. $\cL$ is semi-Fredholm if $\cR(\cL)$ is closed and at least one of $\nul \cL$ and $\mathrm{def} \, \cL$ is finite. 
In both cases the index of $\cL$ is defined as
$\ind \mathcal{L} = \nul \mathcal{L} - \mathrm{def} \, \mathcal{L}$ (cf. \cite{Kat80}). Note that since $\cL$ is a closed operator, then $\sigma(\cL) = \iptsp(\cL) \cup \ess(\cL)$ (cf. Kato \cite{Kat80}, p. 167). There are many definitions of essential spectrum in the literature (see, for example, Kato \cite{Kat80} and Edmunds and Evans \cite{EE87}). Weyl's definition makes it easy to compute and has the advantage that the remaining point spectrum, $\iptsp$, is a discrete set of eigenvalues (see Remark 2.2.4 in \cite{KaPro13}). 
\end{remark}

\subsection{A particular partition of spectra}

As discussed by Leyva \emph{et al.} \cite{LeP20,LeLoP22}, in models with degenerate diffusions one may encounter technical problems when analyzing the spectrum of the linear operator $\cL$. The main obstacle is that there is loss of hyperbolicity of the coefficients when the spectral problem is written as first order system and it is not possible to apply the standard tool to locate the essential spectrum, known as Weyl's essential spectrum theorem \cite{KaPro13}. The following definition is tailored for degenerate diffusion problems.

\begin{definition}[Leyva \emph{et al.} \cite{LeLoP22,LeP20}]
 \label{defspecd}
Let $\cL \in \ccC(X,Y)$ be a closed, densely defined operator. We define the following subsets of the complex 
plane:
\[
 \begin{aligned}
 \ptsp(\cL) := &\{ \lambda \in \C \, : \, \cL - \lambda \; \text{is not injective}\},\\ 
 \spd(\cL) := &\{ \lambda \in \C \, : \, \cL - \lambda \; \text{ is injective, } \cR(\cL - \lambda) \text{ is 
closed and } \cR(\cL- \lambda) \neq Y\},\\
\sppi(\cL) := &\{ \lambda \in \C \, : \, \cL - \lambda \; \text{ is injective and } \cR(\cL - \lambda) 
\text{ is not closed}\}.
 \end{aligned}
\]
\end{definition}

\begin{remark}
Observe that the sets $\ptsp(\cL)$, $\sppi(\cL)$ and $\spd(\cL)$ are clearly disjoint and, since $\cL$ 
is closed, we know that if $\cL$ is invertible then $\cL^{-1} \in \ccB(Y,X)$. Consequently,
\[
 \sigma(\cL) = \ptsp(\cL) \cup \sppi(\cL) \cup \spd(\cL).
\] 
Also, note that the set of isolated eigenvalues with finite multiplicity (see Definition \ref{defsigmaone}) is contained in $\ptsp$, that is, $\iptsp(\cL) \subset \ptsp(\cL)$, and that $\lambda \in \ptsp(\cL)$ if and only if there exists $u \in \cD(\cL)$, $u \neq 0$ such that $\cL u = \lambda u$.
\end{remark}

\begin{remark}
\label{remponla}
For the forthcoming analysis, it is important to observe that the set $\sppi(\cL)$ is contained in the \textit{approximate spectrum} (see, e.g., \cite{EE87}), defined as
\[
\begin{aligned}
 \sppi(\cL) \subset {\sigma_\mathrm{\tiny{app}}}(\cL) := &\{ \lambda \in \C \, : \, \text{there exists $u_n \in \cD(\cL)$ with $\|u_n\| = 1$} \\ 
& \qquad\qquad \text{ such that $(\cL - \lambda)u_n \to 0$ in $Y$, as $n \to \infty$}\}.
\end{aligned}
\]
The last inclusion follows from the fact that, for any $\lambda \in \sppi(\cL)$, the range of $\cL - \lambda$ is 
not closed and, therefore, there exists a \textit{singular sequence}, $u_n \in \cD(\cL)$, $\|u_n\| =1$ such 
that $(\cL - \lambda)u_n \to 0$, which contains no convergent subsequence (see Theorems 5.10 and 5.11 in Kato \cite{Kat80}, p. 233). Moreover, if the space $X$ is reflexive then this sequence can be chosen so that $u_n$ converges weakly to 0 in $X$, denoted as $u_n \rightharpoonup 0$ in $X$ (see \cite{EE87}, p. 415). This is a key property that will be used to locate $\sppi(\cL)$ for the linearized operator around a degenerate Nagumo front. 

On the other hand, the set $\spd(\cL)$ is clearly contained in what is known as the \textit{compression spectrum} 
(see, e.g., \cite{Jerib15}, p. 196):
\[
 \spd(\cL) \subset {\sigma_\mathrm{\tiny{com}}}(\cL) := \{ \lambda \in \C \, : \, \cL - \lambda \; \text{ is 
injective, and } \overline{\cR(\cL-\lambda)} \neq Y\}.
\]
Finally, it is also clear that $\ptsp(\cL) \subset {\sigma_\mathrm{\tiny{app}}}(\cL)$. 
\end{remark}

\subsection{The linearized operator around the front}
Now, let us consider solutions to \eqref{degRD} of the form $\varphi(x)+u(x,t)$, where, from now on, $u$ denotes a 
perturbation. Substituting we obtain the nonlinear equation for perturbations,
\begin{equation}
\label{nonlinpert}
u_t = (D(\varphi+u)(\varphi+u)_x)_x  +f(u+\varphi). 
\end{equation}
Linearizing around the front and using the profile equation \eqref{profileeqn} we get
\begin{equation}\label{lineareq}
u_t = (D(\varphi)u)_{xx} +f'(\varphi)u.
\end{equation}
The right hand side of equation \eqref{lineareq} can be interpreted as a linear operator acting on an appropriate 
Banach space $X$ of perturbations. In this fashion, one naturally arrives at the following spectral problem 
\begin{equation}\label{spectralp}
	\lambda u = \mathcal{L} u,
\end{equation}
where $\lambda \in \mathbb{C}$ is the spectral parameter and
\begin{equation}
\label{opL}
\left\{
	\begin{aligned}
		\mathcal{L}&: \mathcal{D}(\mathcal{L}) \subset X \to X, \\
		\mathcal{L} u &:=(D(\varphi)u)_{xx} +f'(\varphi)u,
	\end{aligned}
\right.	
\end{equation}
is the linearized operator around the wave. Intuitively, by spectral stability we understand the absence of solutions $u \in X$ to equation \eqref{spectralp} for $\Re \lambda > 0$, precluding the existence of solutions to the linearized equation with an explosive behavior in time. Clearly, the spectrum of the operator depends upon the choice of the space of perturbations $X$. In the stability analysis of nonlinear waves (in one space dimension), it is customary to consider the energy space $X = L^2(\R)$; 
concerning the domain $\mathcal{D}(\mathcal{L})$, we consider the \emph{natural} choice 
\begin{equation}
\label{eq:D(L)}
	\mathcal{D}(\mathcal{L}):=\left\{u\in L^2(\R)\, : \, D(\varphi)u\in H^2(\R)\right\}.
\end{equation}
Notice that $H^2(\R)\subset \mathcal{D}(\mathcal{L})$ and as a consequence the linearized operator $\mathcal{L}$ is a densely defined operator acting on $L^{2}(\R)$.
Hence, the stability analysis of the operator $\mathcal{L}$ pertains to \textit{localized perturbations}. In the sequel, $\sigma(\cL)$ denotes the spectrum of any operator $\cL$ when computed with respect to the space $L^2(\R)$. This notation applies to every subset of the spectrum as well.

\begin{definition}[spectral stability]
\label{defspecstab}
 We say the traveling front $\varphi$ is \emph{spectrally stable} if 
 \[ 
 \sigma(\mathcal{L}) \subset \{ \lambda \in \C \, : \, \Re{\lambda} \leq 0\}.
  \]
 Otherwise we say that it is \emph{spectrally unstable}.
\end{definition}

It is to be observed that due to the degeneracy at $\varphi = 0$, the highest order coefficient of the operator $\mathcal{L}$ is not uniformly bounded below and, therefore, the operator $\mathcal{L}$ is, in general, not closed. It can be proved, however, that it is indeed a closed operator if we consider the particular choice of the domain \eqref{eq:D(L)}.
\begin{lemma}\label{lemclosed}
The operator $\mathcal{L} : L^2(\R) \to L^2(\R)$, with domain $\mathcal{D}(\mathcal{L})$ defined in \eqref{eq:D(L)}, is closed.
\end{lemma}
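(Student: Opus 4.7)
The plan is to write $\mathcal{L}$ as the sum of a closed unbounded part and a bounded multiplication part, and then invoke the standard fact that a bounded perturbation of a closed operator with the same domain is closed.

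First, I would note that $f'(\varphi) \in L^\infty(\R)$, since $f \in C^2([0,1];\R)$ and $\varphi$ takes values in $[0,1]$; hence the multiplication operator $\mathcal{B}u := f'(\varphi)u$ is bounded on $L^2(\R)$ with $\mathcal{D}(\mathcal{B}) = L^2(\R)$. Splitting $\mathcal{L} = \mathcal{A} + \mathcal{B}$, where $\mathcal{A}u := (D(\varphi)u)_{xx}$ with $\mathcal{D}(\mathcal{A}) = \mathcal{D}(\mathcal{L})$, it suffices to prove that $\mathcal{A}$ is closed, since adding the bounded operator $\mathcal{B}$ preserves closedness (cf. Kato \cite{Kat80}).

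To show $\mathcal{A}$ is closed, I would take any sequence $u_n \in \mathcal{D}(\mathcal{L})$ such that $u_n \to u$ in $L^2(\R)$ and $\mathcal{A}u_n \to h$ in $L^2(\R)$, and prove that $u \in \mathcal{D}(\mathcal{L})$ with $\mathcal{A}u = h$. Since $D(\varphi) \in L^\infty(\R)$ (again because $D$ is continuous on $[0,1]$ and $\varphi$ is bounded), we have $D(\varphi) u_n \to D(\varphi) u$ in $L^2(\R)$. At the same time $(D(\varphi) u_n)_{xx} \to h$ in $L^2(\R)$ by hypothesis. Now I would invoke the fact that the second derivative operator $\partial_x^2$ with domain $H^2(\R)$ is closed in $L^2(\R)$: this follows, e.g., from Fourier transform, since $\widehat{D(\varphi)u_n} \to \widehat{D(\varphi) u}$ and $-\xi^2 \widehat{D(\varphi) u_n} \to \hat{h}$ in $L^2$, forcing $\xi^2 \widehat{D(\varphi) u} = -\hat{h} \in L^2$, from which $(1+\xi^2)\widehat{D(\varphi) u} \in L^2$ follows by standard interpolation. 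Equivalently, one passes to the limit in the distributional identity $\langle D(\varphi)u_n, \phi'' \rangle = \langle (D(\varphi)u_n)_{xx}, \phi\rangle$ for any test function $\phi$.

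This gives $D(\varphi) u \in H^2(\R)$ with $(D(\varphi) u)_{xx} = h$, which is exactly the statement $u \in \mathcal{D}(\mathcal{L})$ and $\mathcal{A}u = h$. Since the argument only uses that $D(\varphi), f'(\varphi) \in L^\infty(\R)$ and that the second derivative is a closed operator on $L^2(\R)$ with domain $H^2(\R)$, there is no real obstacle here: the definition of $\mathcal{D}(\mathcal{L})$ in \eqref{eq:D(L)} was tailor-made to absorb the degeneracy of the diffusion coefficient into the requirement that $D(\varphi) u$, rather than $u$ itself, belongs to $H^2(\R)$. The one mildly subtle point is the use of closedness of $\partial_x^2$ on $H^2(\R)$, but this is a classical result.
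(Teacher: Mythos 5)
Your proof is correct and follows essentially the same route as the paper: both arguments reduce the claim to the closedness of $\partial_x^2$ on $H^2(\R)$ after observing that multiplication by the bounded functions $D(\varphi)$ and $f'(\varphi)$ preserves $L^2$-convergence. Your explicit splitting $\mathcal{L}=\mathcal{A}+\mathcal{B}$ with a bounded-perturbation citation is only a cosmetic reorganization of the paper's single-sequence argument.
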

\begin{proof}
Let us prove that if the sequence $u_n\in \mathcal{D}(\mathcal{L})$ satisfies 
$$u_n\to u, \; \mbox{ in } \; L^2(\R),$$  
and $\mathcal{L}u_n$ is a Cauchy sequence in $L^2(\R)$, then automatically 
$$u\in \mathcal{D}(\mathcal{L}) \qquad \mbox{ and } \qquad \mathcal{L}u=\lim_{n\to\infty}\mathcal{L}u_n.$$
Set 
$$v:=\lim_{n\to\infty}\mathcal{L}u_n, \qquad \mbox{ in } \; L^2(\R).$$
The boundedness of the term $f'(\varphi)$ implies that
$$f'(\varphi)u_n\to f'(\varphi)u, \; \mbox{ in } \; L^2(\R).$$
Similarly, one has
$$D(\varphi)u_n\to D(\varphi)u, \; \mbox{ in } \; L^2(\R).$$
The definition \eqref{opL} of $\mathcal{L}$ gives
$$(D(\varphi)u_n)_{xx}\to v-f'(\varphi)u, \; \mbox{ in } \; L^2(\R).$$
However, $u_n\in \mathcal{D}(\mathcal{L})$ implies $D(\varphi)u_n\in H^2(\R)$ and since the operator $\partial_{xx}$ with domain $H^2(\R)$ is closed, we can conclude that
$$D(\varphi)u\in H^2(\R) \qquad \mbox{ and } \qquad (D(\varphi)u)_{xx}=v-f'(\varphi)u.$$
As a trivial consequence,
$$\mathcal{L}u=(D(\varphi)u)_{xx} +f'(\varphi)u=v.$$
In conclusion, we proved that $u\in \mathcal{D}(\mathcal{L})$ and $\mathcal{L}u=v$, that is the operator $\mathcal{L}$ with domain $\mathcal{D}(\mathcal{L})$ is closed.
\end{proof}

\begin{remark}[translation eigenvalue]\label{remef0}
In the stability theory of traveling waves, there is always a zero eigenvalue associated to translations in space of the wave. 
Indeed, in view of the profile equation \eqref{profileeqn}, it is clear that
\[ 
\mathcal{L} \varphi_x = \partial_x \big( (D(\varphi)\varphi_x)_x+f(\varphi) \big)=0.
\]
In order to conclude that $0 \in \ptsp(\cL)$, we need to verify that the derivative of the profile does belong to the domain of $\cL$.
\end{remark}

\begin{lemma}\label{lemitados}
$\varphi_x \in \cD(\cL)$ and $\cL \varphi_x = 0$.
\end{lemma}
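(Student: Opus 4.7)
The plan is to verify the two conditions defining membership in $\cD(\cL)$ as given in \eqref{eq:D(L)} and then perform the direct computation already sketched in Remark \ref{remef0}.

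First, that $\varphi_x \in L^2(\R)$ is immediate from Lemma \ref{lemC3}, which asserts $\varphi_x \in H^1(\R)$. The nontrivial requirement is to show $D(\varphi)\varphi_x \in H^2(\R)$. The idea is that, although $\varphi$ itself is not $C^2$ at $x = \omega_0$, the combination $D(\varphi)\varphi_x$ is much better behaved because the profile equation \eqref{profileeqn} can be rewritten as
\begin{equation*}
\big(D(\varphi)\varphi_x\big)_x = -f(\varphi).
\end{equation*}
I would use this identity to compute the derivatives explicitly on $(-\infty,\omega_0)$ and on $(\omega_0,\infty)$, then glue at $x = \omega_0$.

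Second, I would establish $H^2$ regularity in three steps. For the function itself: on $(-\infty,\omega_0)$, $D(\varphi)\varphi_x$ decays exponentially as $x \to -\infty$ by the non-degenerate decay \eqref{asintotasN} and the boundedness of $D$; on $[\omega_0,\infty)$ it vanishes identically. Continuity at $\omega_0$ follows since $\varphi_x \to 0$ as $x \to \omega_0^-$ by \eqref{decaysN} and $D(0)=0$. Hence $D(\varphi)\varphi_x \in L^2(\R) \cap C(\R)$. For the first derivative: by the profile equation the classical derivative is $-f(\varphi)$ on $(-\infty,\omega_0)$ and $0$ on $(\omega_0,\infty)$; continuity at $\omega_0$ follows from $f(0)=0$, so $D(\varphi)\varphi_x \in C^1(\R)$ with $(D(\varphi)\varphi_x)_x = -f(\varphi)$ globally, and this lies in $L^2(\R)$ (exponential decay at $-\infty$, identically zero past $\omega_0$). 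For the second derivative: differentiating again gives $-f'(\varphi)\varphi_x$ on $(-\infty,\omega_0)$ and $0$ on $(\omega_0,\infty)$; the left-hand limit at $\omega_0$ is $-f'(0)\cdot 0 = 0$, matching the right-hand value, so $-f'(\varphi)\varphi_x$ is continuous across $\omega_0$ and represents the distributional second derivative on all of $\R$. Since $f'$ is bounded on $[0,1]$ and $\varphi_x \in L^2(\R)$, this function belongs to $L^2(\R)$. Therefore $D(\varphi)\varphi_x \in H^2(\R)$, which yields $\varphi_x \in \cD(\cL)$.

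Finally, the identity $\cL \varphi_x = 0$ follows by combining the two identities above:
\begin{equation*}
\cL \varphi_x = (D(\varphi)\varphi_x)_{xx} + f'(\varphi)\varphi_x = -f'(\varphi)\varphi_x + f'(\varphi)\varphi_x = 0.
\end{equation*}
The main (minor) obstacle is ensuring that the gluing of derivatives at $x=\omega_0$ produces legitimate $H^2$ regularity despite the lack of $C^2$ smoothness of $\varphi$ there; this is resolved precisely by the vanishing of $\varphi_x$, $f(\varphi)$, and $f'(\varphi)\varphi_x$ at $\omega_0^-$, as guaranteed by Lemma \ref{lemC3} and the asymptotic estimates in Proposition \ref{algDecay}.
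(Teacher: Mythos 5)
Your proof is correct and follows essentially the same route as the paper: both use the profile equation to identify $(D(\varphi)\varphi_x)_x = -f(\varphi)$ and hence $(D(\varphi)\varphi_x)_{xx} = -f'(\varphi)\varphi_x$, concluding $D(\varphi)\varphi_x \in H^2(\R)$ and $\cL\varphi_x = 0$. You simply spell out the gluing at $x=\omega_0$ and the decay at $-\infty$ in more detail than the paper's terse version, which is harmless.
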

\begin{proof}
Let us first prove that $\varphi_x \in \cD(\cL)$.
We already proved that $\varphi_x \in L^2(\R)$ (see Lemma \ref{lemC3}); from \eqref{profileeqn} it follows that
$$\big( D(\varphi)\varphi_x \big)_x= -f(\varphi).$$
Hence, $\big(D(\varphi)\varphi_x)\in H^2(\R)$ with $\big(D(\varphi)\varphi_x\big)_{xx}=-f'(\varphi)\varphi_x$ and 
$\varphi_x \in \cD(\cL)$ because of the definition \eqref{eq:D(L)}.
On the other hand, as it was previously mentioned
$$\mathcal{L} \varphi_x=(D(\varphi)\varphi_x)_{xx} +f'(\varphi)\varphi_x= \partial_x \big( (D(\varphi)\varphi_x)_x+f(\varphi) \big)=0,$$
in view of \eqref{profileeqn} and the proof is complete.
\end{proof}

\section{Spectral stability}
\label{secspectralstab}

In this section, it is proved that the $L^2$-spectrum of the linearized operator $\cL$ around the stationary degenerate Nagumo front $\varphi$ is stable. We use the techniques introduced by Leyva \emph{et al.} \cite{LeP20,LeLoP22} to handle the degeneracy of the front.

\subsection{Energy estimates and point spectral stability}
Let us perform some energy estimates in order to determine the stability of the point 
spectrum, that is, to show that $\Re \ptsp(\mathcal{L}) < 0$ except for $\lambda = 0$, which is an isolated eigenvalue 
with finite multiplicity. Recall that the zero eigenvalue is related to the invariance of a traveling wave with respect 
to translation (see Remark \ref{remef0}; in fact, $\lambda = 0 \in \ptsp(\mathcal{L})$ is associated to the eigenfunction $\varphi_x \in \cD(\cL)$). 

We start by establishing a very useful identity.

\begin{proposition}\label{lemidentity} 
Assume that $D,f$ satisfy \eqref{hypD}, \eqref{bistablef} and \eqref{eq:int-Df}.
If $\varphi$ is the stationary front solution to \eqref{degRD} studied in Section \ref{secstructure} and $\cL$ is the operator defined in \eqref{opL},
then for every $u \in H^2(\R)$ the following identity holds
\begin{equation}\label{identity}
\langle u, D(\varphi) \cL u \rangle_{L^2} = - \int_{-\infty}^{\omega_0} D(\varphi)^2  \varphi_x ^2 \left| 
\left(  \frac{u}{\varphi_x} \right)_x \right|^2 dx.
\end{equation}
\end{proposition}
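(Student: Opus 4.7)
My plan is to exploit the fact that $\lambda=0$ is an eigenvalue with eigenfunction $\varphi_x$ (Lemma \ref{lemitados}) in order to factor the quadratic form on the left-hand side into a perfect square. First, since $\varphi\equiv 0$ on $[\omega_0,\infty)$ by Lemma \ref{lemfinitetime}, the multiplier $D(\varphi)$ also vanishes there, so $D(\varphi)\,\cL u\equiv 0$ on $[\omega_0,\infty)$, regardless of the value of $\cL u$. Hence $\langle u,D(\varphi)\cL u\rangle_{L^2}$ reduces to an integral over $(-\infty,\omega_0)$, on which $\varphi_x<0$. On that interval I will perform the substitution $u=\varphi_x w$, i.e. $w=u/\varphi_x$.

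The computational core is to expand $(D(\varphi)u)_{xx}=(D(\varphi)\varphi_x w)_{xx}$ by Leibniz and add $f'(\varphi)\varphi_x w$. The $w$-terms without derivatives combine into
\[
\bigl[(D(\varphi)\varphi_x)_{xx}+f'(\varphi)\varphi_x\bigr]w=(\cL\varphi_x)w=0,
\]
by Lemma \ref{lemitados}, so one is left with
\[
\cL u=D(\varphi)\varphi_x\, w_{xx}+2(D(\varphi)\varphi_x)_x\, w_x.
\]
Multiplying by $D(\varphi)u^{*}=D(\varphi)\varphi_x w^{*}$ and using $2AA_x=(A^2)_x$ with $A=D(\varphi)\varphi_x$ produces the telescoping identity
\[
D(\varphi)u^{*}\cL u=\partial_x\bigl[(D(\varphi)\varphi_x)^2 w^{*}w_x\bigr]-(D(\varphi)\varphi_x)^2|w_x|^2.
\]
Integrating from $-\infty$ to $\omega_0$, the bulk term yields exactly the right-hand side of \eqref{identity}, so the whole argument reduces to showing that the boundary contributions vanish and that the resulting integrand is indeed integrable.

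To this end, I rewrite the boundary expression in terms of $u$, using the formula $\varphi_x(u/\varphi_x)_x=u_x-u\,\varphi_{xx}/\varphi_x$, as
\[
(D(\varphi)\varphi_x)^2 w^{*}w_x=D(\varphi)^2 u^{*}u_x-D(\varphi)u^{*}u\cdot\bigl(D(\varphi)\varphi_{xx}/\varphi_x\bigr).
\]
At $x\to-\infty$, the front decays exponentially to $1$ with $\varphi_x\to 0$ exponentially by Proposition \ref{algDecay}, and $u,u_x\to 0$ by the Sobolev embedding $H^2(\R)\hookrightarrow C^1_0(\R)$, so both terms vanish. At $x\to\omega_0^{-}$, one has $D(\varphi)\to 0$ while $u,u_x$ stay bounded and $D(\varphi)\varphi_{xx}/\varphi_x$ stays bounded by Lemma \ref{lemauxi}; hence the boundary term again vanishes. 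The same rearrangement gives the pointwise estimate
\[
D(\varphi)^2\varphi_x^2\left|\left(\tfrac{u}{\varphi_x}\right)_x\right|^2\leq 2\bigl[D(\varphi)^2|u_x|^2+C^2|u|^2\bigr],
\]
which shows that the right-hand side of \eqref{identity} is a finite Lebesgue integral for every $u\in H^2(\R)$.

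The one genuine difficulty is the behavior at the degenerate endpoint $\omega_0$: a priori $w=u/\varphi_x$ blows up there, and one must simultaneously control this blow-up and the singularity in $\varphi_{xx}/\varphi_x$ against the vanishing of $D(\varphi)$. The precise balance is encoded by Lemma \ref{lemauxi}, which is the key analytic input that turns the formal computation into a rigorous identity valid for all $u\in H^2(\R)$.
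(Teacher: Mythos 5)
Your proof is correct, and it reaches \eqref{identity} by a somewhat different organization than the paper. You perform the ground-state (Darboux-type) substitution $u=\varphi_x w$ at the outset, use $\cL\varphi_x=0$ to kill the zeroth-order term in $w$, and then recognize $D(\varphi)u^{*}\cL u$ as an exact derivative minus the perfect square $\left(D(\varphi)\varphi_x\right)^2|w_x|^2$, so that a single integration over $(-\infty,\omega_0)$ finishes the job. The paper instead keeps $u$ throughout: it writes $D(\varphi)\cL u=(D(\varphi)^2u_x)_x+D(\varphi)hu$, substitutes $D(\varphi)h=-(D(\varphi)^2\varphi_{xx})_x/\varphi_x$ from the eigenvalue relation, integrates by parts twice, and only at the very end invokes the algebraic identity $\varphi_x^2\bigl|(u/\varphi_x)_x\bigr|^2=-\bigl(\varphi_{xx}(|u|^2/\varphi_x)_x-|u_x|^2\bigr)$ to assemble the square. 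The two arguments rest on the same pillars --- Lemma \ref{lemitados} for the cancellation and Lemma \ref{lemauxi} together with the Sobolev embedding $H^2(\R)\hookrightarrow C^1$ for the vanishing of the boundary contributions --- and your rewriting of the boundary flux as $D(\varphi)^2u^{*}u_x-D(\varphi)|u|^2\cdot\bigl(D(\varphi)\varphi_{xx}/\varphi_x\bigr)$ is exactly the control the paper also needs. Your version has the advantage of making the origin of the perfect square transparent and of delivering, as a by-product, the pointwise bound that shows the right-hand side of \eqref{identity} is finite (a fact the paper only establishes later, in the extension to $\cD(\cL)$ via \eqref{eq:trick}). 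The only point you should state explicitly is the mild regularity bookkeeping: on $(-\infty,\omega_0)$ one has $\varphi_x<0$ and $\varphi\in C^3$ (bootstrap from the profile equation), so $w=u/\varphi_x$ has a locally integrable weak second derivative and the Leibniz expansion and the fundamental theorem of calculus on compact subintervals are legitimate before passing to the limits $x\to-\infty$ and $x\to\omega_0^{-}$; this is implicit in the paper's proof as well.
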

\begin{proof}
If $u \in H^2(\R)$, then
\[
 \mathcal{L} u = D(\varphi) u_{xx} + 2 D(\varphi)_x u_x + (D(\varphi)_{xx} + f'(\varphi)) u.
\]
Multiply last equation by $D(\varphi)$ and rearrange terms to obtain
\begin{equation}\label{newspprblm}
D(\varphi) \mathcal{L} u = (D(\varphi)^2 u_x)_x + D(\varphi) h u,
\end{equation}
where $h:= D(\varphi)_{xx} + f'(\varphi)$. In particular, since $\mathcal{L}\varphi_x = 0$, we also have 
that
\begin{equation}\label{la2}
 (D(\varphi)^2 \varphi_{xx})_x + D(\varphi) h\varphi_x = 0, \qquad \mbox{a.e. in } x \in \R.
\end{equation}
Take the $L^2$-complex product of \eqref{newspprblm} with $u$. This yields,
\[
\langle u, D(\varphi) \mathcal{L} u \rangle_{L^2}  = \int_\R \left[u^* ( D(\varphi)^2 u_x)_x + D(\varphi) h |u|^2\right] \, dx.
\]
Notice that, since $\varphi(x) \equiv 0$ for all $x \in (\omega_0, \infty)$, this implies that
\[
\langle u, D(\varphi) \mathcal{L} u \rangle_{L^2}  = \int_{-\infty}^{\omega_0} \left[u^* ( D(\varphi)^2 u_x)_x + D(\varphi) h|u|^2\right] \, dx.
\]

For $x \in (-\infty, \omega_0)$ the front is strictly monotone, with $\varphi_x < 0$. 
Hence, we may substitute \eqref{la2}, rewritten as
\[
D(\varphi) h= - \frac{(D(\varphi)^2 \varphi_{xx} )_x}{\varphi_x},
\]
into the integral to arrive at
\[
\langle u, D(\varphi) \mathcal{L} u \rangle_{L^2}  =  \int_{-\infty}^{\omega_0} u^* ( D(\varphi)^2 u_x)_x \, dx - \int_{-\infty}^{\omega_0} \frac{(D(\varphi)^2 \varphi_{xx} )_x}{\varphi_x} |u|^2 \, dx. 
\]
Integrating by parts we obtain,
\begin{align*}
	\int_{-\infty}^{\omega_0} u^* ( D(\varphi)^2 u_x)_x \, dx &= - \int_{-\infty}^{\omega_0} D(\varphi)^2 |u_x|^2 \, dx + \big( u^* D(\varphi)^2 u_x \big) \Big|_{-\infty}^{\omega_0} \\
	&= - \int_{-\infty}^{\omega_0} D(\varphi)^2 |u_x|^2 \, dx,
\end{align*}
because $D(\varphi) \to D(0) = 0$ as $x \to \omega_0^-$ and $|u|,|u_x| \to 0$ as $x \to - \infty$. On the other hand, 
\[
\int_{-\infty}^{\omega_0} \frac{(D(\varphi)^2 \varphi_{xx} )_x}{\varphi_x} |u|^2 \, dx = - \int_{-\infty}^{\omega_0} \left( \frac{|u|^2}{\varphi_x}\right)_x D(\varphi)^2 \varphi_{xx} \, dx + \left. \Big( \frac{|u|^2 D(\varphi)^2 \varphi_{xx}}{\varphi_x}\Big) \right|_{-\infty}^{\omega_0}.
\]
But by Lemma \ref{lemauxi}, there exists a uniform constant $C > 0$ such that
\[
0 \leq |u|^2 \Big| \frac{D(\varphi)^2 \varphi_{xx}}{\varphi_x} \Big| \leq C |u|^2 D(\varphi) \to 0,
\]
as $x \to - \infty$ and as $x \to \omega_0^-$. Therefore, the boundary terms vanish and we arrive at
\begin{equation*}
\langle u, D(\varphi) \mathcal{L} u \rangle_{L^2}  = \int_{-\infty}^{\omega_0} D(\varphi)^2 \left( \varphi_{xx} \left(  
\frac{|u|^2}{\varphi_x} \right)_x  - |u_x| ^2\right)dx .
\end{equation*}
Now, using the identity
\[ \varphi_x ^2 \left| \left(  \frac{u}{\varphi_x} \right)_x \right|^2 = - \left( \varphi_{xx} 
\left(  \frac{|u|^2}{\varphi_x} \right)_x  - |u_x| ^2\right), \qquad x \in (-\infty, \omega_0),
\]
we get 
\begin{equation*}
\langle u, D(\varphi) \mathcal{L} u \rangle_{L^2}  = - \int_{-\infty}^{\omega_0} D(\varphi)^2  \varphi_x ^2 \left| 
\left(  \frac{u}{\varphi_x} \right)_x \right|^2 dx,
\end{equation*}
for every $u \in H^2(\R)$.
\end{proof}
The next goal is to extend the identity \eqref{identity} to the whole domain $\cD(\cL)$. To do that, we prove the following result.
\begin{lemma}\label{lem:core}
Assume that $D,f$ satisfy \eqref{hypD}, \eqref{bistablef} and \eqref{eq:int-Df}.
If $\varphi$ is the traveling front solutions to \eqref{degRD} studied in Section \ref{secstructure} and $\cL$ is the operator defined in \eqref{opL},
then for any $u\in\cD(\cL)$, there exists a sequence $u_n\in H^2(\R)$ such that
\begin{equation}
\label{eq:core}
	u_n\to u, \quad \mbox{ in } L^2(\R) \qquad \mbox{ and } \qquad \cL u_n\to\cL u, \quad \mbox{ in } L^2(\R).
\end{equation}
\end{lemma}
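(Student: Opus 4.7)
The plan is to construct $u_n\in H^2(\R)$ in two stages: first split $u$ according to the two sides of the arrival point $\omega_0$, then approximate each piece separately, exploiting the fact that the restriction to $(\omega_0,\infty)$ sees a trivialized operator while on the left the only obstruction to $H^2$ regularity of $u$ is a shrinking boundary layer at $\omega_0$.

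I would set $u_- := u\,\mathbf{1}_{(-\infty,\omega_0)}$ and $u_+ := u\,\mathbf{1}_{(\omega_0,\infty)}$, and observe that both belong to $\cD(\cL)$: for $u_+$ because $D(\varphi) \equiv 0$ on $(\omega_0,\infty)$ forces $D(\varphi) u_+ \equiv 0$, and for $u_-$ because $D(\varphi)u_-$ coincides almost everywhere with $w := D(\varphi) u \in H^2(\R)$. By linearity it suffices to approximate each piece. For $u_+$, since $\varphi \equiv 0$ on $\supp u_+$ one has $\cL u_+ = f'(0) u_+$; any sequence $u_{+,m} \in C^\infty_c((\omega_0,\infty))$ with $u_{+,m} \to u_+$ in $L^2(\R)$ then automatically lies in $H^2(\R)$ and satisfies $\cL u_{+,m} = f'(0) u_{+,m} \to \cL u_+$.

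For $u_-$, I would pick a cutoff $\chi_n \in C^\infty(\R)$ equal to $1$ on $(-\infty, \omega_0 - 2/n]$, to $0$ on $[\omega_0 - 1/n,\infty)$, with $|\chi_n^{(k)}| \leq C n^k$ for $k \in \{1,2\}$, and set $u_{-,n} := \chi_n u_-$. On $\supp \chi_n$ the coefficient $D(\varphi)$ is uniformly bounded below (by continuity, strict positivity of $\varphi$ there, and the exponential decay $\varphi\to1$ given by Proposition \ref{algDecay}), so $1/D(\varphi)$ is $C^2$ and bounded on the support; hence $u_{-,n} = \chi_n w / D(\varphi)$ lies in $H^2(\R)$. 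Dominated convergence immediately gives $u_{-,n} \to u_-$ in $L^2(\R)$ and $f'(\varphi)u_{-,n} \to f'(\varphi) u_-$ in $L^2(\R)$. Using $D(\varphi) u_{-,n} = \chi_n w$, one expands
\[
(\chi_n w)_{xx} = \chi_n w_{xx} + 2 (\chi_n)_x w_x + (\chi_n)_{xx} w,
\]
whose first term converges to $w_{xx}$ in $L^2(\R)$ by dominated convergence.

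The main obstacle will be controlling the two cross terms, since $(\chi_n)_x$ and $(\chi_n)_{xx}$ are supported on an interval of length $1/n$ and blow up like $n$ and $n^2$ respectively. The crucial input is that $w \equiv 0$ on $[\omega_0,\infty)$, so the one-dimensional Sobolev embedding $H^2(\R) \hookrightarrow C^1(\R)$ forces $w(\omega_0) = 0$ and $w_x(\omega_0) = 0$. Writing $w_x(x) = \int_{\omega_0}^x w_{xx}(s)\,ds$ and $w(x) = \int_{\omega_0}^x w_x(s)\,ds$ and applying Cauchy--Schwarz produces, for every $x \in (\omega_0 - 2/n, \omega_0)$, the quantitative decay
\[
|w_x(x)| \leq \varepsilon_n\sqrt{\omega_0 - x}, \qquad |w(x)| \leq \tfrac{2}{3}\,\varepsilon_n(\omega_0 - x)^{3/2},
\]
where $\varepsilon_n := \|w_{xx}\|_{L^2(\omega_0 - 2/n,\,\omega_0)} \to 0$. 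Balancing the $n^k$ blow-up against these vanishing rates and the length $1/n$ of the layer yields $\|2(\chi_n)_x w_x\|_{L^2} + \|(\chi_n)_{xx} w\|_{L^2} = O(\varepsilon_n)$, which closes the left-piece approximation. Setting $u_n := u_{-,n} + u_{+,m(n)}$ for a sufficiently rapidly growing $m(n)$ produces the required sequence in $H^2(\R)$ satisfying \eqref{eq:core}.
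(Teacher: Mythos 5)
Your proposal is correct, and it follows the same basic strategy as the paper's proof (cut off at distance $O(1/n)$ from $\omega_0$, exploit $w:=D(\varphi)u\in H^2(\R)\hookrightarrow C^1(\R)$ with $w(\omega_0)=w_x(\omega_0)=0$, then divide by $D(\varphi)$ where it is bounded below), but it deviates in two substantive ways. First, the paper works only with $w$: it sets $w_n=\chi_n w$, mollifies to get $\tilde w_n\in C^\infty$ vanishing on $[\omega_0,\infty)$ together with all derivatives at $\omega_0$, and then defines $u_n=\tilde w_n/D(\varphi)$ for $x<\omega_0$ and $u_n=0$ for $x\geq\omega_0$. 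Your construction skips the mollification entirely and instead controls the commutator terms $2(\chi_n)_x w_x+(\chi_n)_{xx}w$ by the Cauchy--Schwarz bounds $|w_x(x)|\leq\varepsilon_n\sqrt{\omega_0-x}$ and $|w(x)|\leq\tfrac23\varepsilon_n(\omega_0-x)^{3/2}$ with $\varepsilon_n=\|w_{xx}\|_{L^2(\omega_0-2/n,\omega_0)}\to0$; this is arguably tighter than the paper, which invokes the pointwise bound $|w(x)|\leq C|\omega_0-x|^2$ (a bound that would require $u$ to be bounded near $\omega_0$, which a generic element of $\cD(\cL)$ need not be), whereas your $o((\omega_0-x)^{3/2})$ rate follows from $w\in H^2$ alone and suffices for the estimate. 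Second, and more importantly, you split $u=u_-+u_+$ and approximate the tail $u_+=u\,\mathbf{1}_{(\omega_0,\infty)}$ separately by $C^\infty_c((\omega_0,\infty))$ functions, using that $\cL$ acts there as multiplication by $f'(0)$. This step is genuinely needed: an element of $\cD(\cL)$ is unconstrained on $(\omega_0,\infty)$ (since $D(\varphi)\equiv 0$ there), while the paper's displayed $u_n$ vanishes identically on $[\omega_0,\infty)$ and so cannot converge to $u$ in $L^2(\R)$ unless $u_+=0$; your decomposition closes this case cleanly. The only point you should spell out is that $1/D(\varphi)$ is $C^2$ with bounded derivatives on $(-\infty,\omega_0-1/n]$ (which follows from the monotonicity of $\varphi$ and $D$, the positivity of $D(1)$, and the boundedness of $\varphi_x,\varphi_{xx}$ on that ray), so that $u_{-,n}=\chi_n w/D(\varphi)$ indeed lies in $H^2(\R)$ and $\cL u_{-,n}=(\chi_n w)_{xx}+f'(\varphi)u_{-,n}$.
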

\begin{proof}
Fix $u\in\cD(\cL)$ and define $w:=D(\varphi)u\in H^2(\R)$.
In particular, notice that $w\in C^1(\R)$ and, since $D(\varphi)=0$, for any $x\geq\omega_0$ and $u\in L^2(\R)$, we deduce that
$$w(x)=0, \quad \mbox{ for any } x\geq\omega_0, \qquad \qquad w(\omega_0)=w'(\omega_0)=0.$$
Let us define a smooth cutoff function $\chi\in C^\infty(\R)$ such that
$$\chi(x)=0, \quad \mbox{ for } x\geq0, \qquad \mbox{ and } \qquad \chi(x)=1, \quad \mbox{ for } x\leq-1.$$
Set $\chi_n(x):=\chi(n(x-\omega_0)+1)$, then $\chi_n\in C^\infty(\R)$ satisfies $|\chi'_n|\leq Cn$, for any $x\in\R$ and
$$\chi_n(x)=0, \quad \mbox{ for } x\geq\omega_0-1/n, \qquad \mbox{ and } \qquad \chi_n(x)=1, \quad \mbox{ for } x\leq\omega_0-2/n.$$
It is easy to check that the sequence $w_n=\chi_n w \in H^2(\R)$ satisfies 
\begin{equation}\label{eq:crucial}
	w_n(x)=0, \qquad \qquad \mbox{ if } x\geq\omega_0-\frac1n, \,\mbox{ for any }n\in\N.
\end{equation} 
Moreover, since $|w(x)|\leq C|\omega_0-x|^2$, for $x$ near $\omega_0$, we obtain 
$$\lim_{n\to\infty}w_n= w, \qquad \mbox{ in } H^2(\R).$$
Considering that $D(\varphi(\omega_0))=0$, we can not state that $w_n/D(\varphi)\in H^2(\R)$;
thus, we introduce the sequence
$$\tilde w_n(x):=\left(w_n\,*\,\rho_{1/n}\right)(x)=\int_{-\infty}^{\infty}w_n(y)\rho_{1/n}(x-y)\,dy=\int_{-\infty}^{\infty}w_n(x-y)\rho_{1/n}(y)\,dy,$$
where $\rho_{1/n}(x):=n\rho(nx)$, with $\rho\in C^\infty_c(0,1)$, satisfying $\displaystyle\int_0^1 \rho=1$.
It is well known that $\tilde w_n(x)=0$, for $x\geq\omega_0$, $\tilde w_n\in C^\infty(\R)$ and
$$\tilde w_n^{(k)}(x)=\left(w_n\,*\,\rho_{1/n}^{(k)}\right)(x)=\int_{0}^{1/n}w_n(x-y)\rho_{1/n}^{(k)}(y)\,dy,$$
for any $k\in\N$.
In particular, thanks to the crucial property \eqref{eq:crucial}, we end up with
$$\tilde w_n^{(k)}(\omega_0)=\int_{0}^{1/n}w_n(\omega_0-y)\rho_{1/n}^{(k)}(y)\,dy=0,$$  
for any $k\in\N$.
Finally, one has
$$\lim_{n\to+\infty}\tilde w_n=w=D(\varphi)u, \qquad \mbox{ in } H^2(\R),$$
since 
$$\|\tilde w_n-w\|_{H^2}\leq\|\tilde w_n-w_n\|_{H^2}+\|w_n-w\|_{H^2}\to 0, \qquad \mbox{ as } n\to\infty.$$ 
Now, the function
$$u_n:=
\begin{cases}
\displaystyle\frac{\tilde w_n}{D(\varphi)}, \qquad & x<\omega_0,\\
0, & x\geq\omega_0,
\end{cases}
$$
satisfies $u_n\in H^2(\R)$, $u_n\to u$, in $L^2(\R)$, $D(\varphi)u_n\to D(\varphi)u$, in $H^2(\R)$ and, as a consequence, we proved \eqref{eq:core}.
\end{proof}

Thanks to Lemma \ref{lem:core}, we can extend the identity \eqref{identity} to the whole domain $\cD(\cL)$.
\begin{proposition}
Under the same assumption of Proposition \ref{lemidentity}, the identity \eqref{identity} holds true for any $u\in\cD(\cL)$.
\end{proposition}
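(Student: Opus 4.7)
The plan is to apply Lemma~\ref{lem:core} to obtain, for a given $u\in\cD(\cL)$, an approximating sequence $u_n\in H^2(\R)$ with $u_n\to u$ and $\cL u_n\to \cL u$ in $L^2(\R)$, then pass to the limit in the identity \eqref{identity} written for each $u_n$. The left-hand side converges routinely: since $D(\varphi)$ is bounded on $\R$ by hypothesis \eqref{hypD}, one has $D(\varphi)\cL u_n\to D(\varphi)\cL u$ in $L^2$, and combined with $u_n\to u$ in $L^2$ this yields $\langle u_n, D(\varphi)\cL u_n\rangle_{L^2}\to\langle u, D(\varphi)\cL u\rangle_{L^2}$.

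The right-hand side is the delicate part. My first move would be to rewrite the integrand. On $(-\infty,\omega_0)$, where $\varphi_x<0$, expanding the derivative gives
\[
D(\varphi)\varphi_x\Big(\frac{u_n}{\varphi_x}\Big)_x = D(\varphi)\,u_{n,x} - u_n\cdot \frac{D(\varphi)\varphi_{xx}}{\varphi_x},
\]
and by Lemma~\ref{lemauxi} the coefficient $D(\varphi)\varphi_{xx}/\varphi_x$ is uniformly bounded on $(-\infty,\omega_0)$. Thus the right-hand side of \eqref{identity} for $u_n$ equals $-\|D(\varphi)u_{n,x} - u_n D(\varphi)\varphi_{xx}/\varphi_x\|_{L^2(-\infty,\omega_0)}^2$, and the task reduces to controlling the $L^2$-convergence of each of the two terms inside this norm. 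Convergence of $u_n\,D(\varphi)\varphi_{xx}/\varphi_x$ to $u\,D(\varphi)\varphi_{xx}/\varphi_x$ in $L^2$ is immediate from boundedness of the coefficient together with $u_n\to u$ in $L^2$.

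The main obstacle is proving $D(\varphi)u_{n,x}\to D(\varphi)u_x$ in $L^2(\R)$, since the unweighted derivatives $u_{n,x}$ are not a priori controlled. To secure it, I would exploit the structure of $\cL$: writing $\cL u_n=(D(\varphi)u_n)_{xx}+f'(\varphi)u_n$ and using boundedness of $f'(\varphi)$ together with $u_n\to u$ and $\cL u_n\to\cL u$ in $L^2$, one deduces $(D(\varphi)u_n)_{xx}\to (D(\varphi)u)_{xx}$ in $L^2$. Combined with $D(\varphi)u_n\to D(\varphi)u$ in $L^2$, this upgrades to $D(\varphi)u_n\to D(\varphi)u$ in $H^2(\R)$, so in particular $(D(\varphi)u_n)_x\to (D(\varphi)u)_x$ in $L^2$. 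The chain-rule decomposition $(D(\varphi)u_n)_x=D'(\varphi)\varphi_x u_n+D(\varphi)u_{n,x}$, together with boundedness of $D'(\varphi)\varphi_x$, then yields the desired $D(\varphi)u_{n,x}\to D(\varphi)u_x$ in $L^2$. Passing to the limit in \eqref{identity} for $u_n$ then establishes the identity for every $u\in\cD(\cL)$, with the understanding that the right-hand side is read through its expanded form: the ratio $u/\varphi_x$ need not be pointwise defined near $x=\omega_0$, but the combination $D(\varphi)\varphi_x(u/\varphi_x)_x$ is intrinsically well-defined as $D(\varphi)u_x - u\,D(\varphi)\varphi_{xx}/\varphi_x\in L^2(-\infty,\omega_0)$.
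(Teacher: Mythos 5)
Your proposal is correct and follows essentially the same route as the paper: approximate $u\in\cD(\cL)$ by the $H^2$ sequence of Lemma~\ref{lem:core}, pass to the limit in \eqref{identity}, and control the right-hand side by rewriting the integrand as $D(\varphi)u_x - u\,D(\varphi)\varphi_{xx}/\varphi_x$ (algebraically identical to the paper's decomposition \eqref{eq:trick}, which merely splits off the extra term $D'(\varphi)\varphi_x u$ from $[D(\varphi)u]_x$) together with the uniform bound of Lemma~\ref{lemauxi}. The only cosmetic difference is that you re-derive the $H^2$-convergence of $D(\varphi)u_n$ from the stated conclusion of Lemma~\ref{lem:core} via the closedness-type argument and interpolation, whereas the paper reads it off directly from the construction in that lemma; both are valid.
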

\begin{proof}
To begin with, we prove that the right-hand side of \eqref{identity} is well defined for $u\in\cD(\cL)$.
Indeed, one can rewrite 
\begin{equation}\label{eq:trick}
	D(\varphi)\varphi_x\left( \frac{u}{\varphi_x}\right)_x=D(\varphi)\varphi_x\left( \frac{D(\varphi)u}{D(\varphi)\varphi_x}\right)_x
	=\left[D(\varphi)u\right]_x-\frac{D(\varphi)\varphi_{xx}}{\varphi_x}u-D'(\varphi)\varphi_xu,
\end{equation}
and the definition of the domain \eqref{eq:D(L)} together with Lemma \ref{lemauxi} imply that 
$$\mbox{if } u\in\cD(\cL), \qquad \mbox{ then } \qquad D(\varphi)\varphi_x\left( \frac{u}{\varphi_x}\right)_x\in L^2(\R).$$

Next, fix $u\in\cD(\cL)$ and consider $u_n\in H^2(\R)$ such that \eqref{eq:core} holds true. 
Thus, Proposition \ref{lemidentity} implies 
\begin{equation}\label{eq:density}
	\langle u_n, D(\varphi) \cL u_n\rangle_{L^2} = - \int_{-\infty}^{\omega_0} D(\varphi)^2  \varphi_x ^2 \left|\left(\frac{u_n}{\varphi_x} \right)_x \right|^2 dx,
\end{equation}
for any $n\in\mathbb N$.
Regarding the left-hand side, one has
\begin{align*}
	\left|\langle u_n, D(\varphi) \cL u_n\rangle_{L^2}-\langle u, D(\varphi) \cL u\rangle_{L^2} \right|&\leq \left|\langle u_n-u, D(\varphi) \cL u_n\rangle_{L^2}\right|\\ 
	& \qquad +\left|\langle u, D(\varphi) \left(\cL u_n-\cL u\right)\rangle_{L^2}\right|\\
	&\leq\|D(\varphi)\|_{L^\infty}\|u_n-u\|_{L^2}\|\cL u_n\|_{L^2}\\
	&\quad + \|D(\varphi)\|_{L^\infty}\|u\|_{L^2}\|\cL u_n-\cL u\|_{L^2},
\end{align*}
and from \eqref{eq:core} it follows that
\begin{equation}\label{eq:part1}
	\lim_{n\to\infty}\langle u_n, D(\varphi) \cL u_n\rangle_{L^2}=\langle u, D(\varphi) \cL u\rangle_{L^2}.
\end{equation}
On the other hand, by using \eqref{eq:trick} we deduce
\begin{align*}
	\left\|D(\varphi) \varphi_x\left(\frac{u_n}{\varphi_x} \right)_x-D(\varphi) \varphi_x\left(\frac{u}{\varphi_x} \right)_x\right\|_{L^2}
	&\leq\left\|\left[D(\varphi)u_n\right]_x-\left[D(\varphi)u\right]_x\right\|_{L^2}\\
	&\qquad +\left\|\frac{D(\varphi)\varphi_{xx}}{\varphi_x}(u-u_n)\right\|_{L^2}\\
	&\qquad +\left\|D'(\varphi)\varphi_x(u-u_n)\right\|_{L^2}.
\end{align*}
Again, by using \eqref{eq:core} and Lemma \ref{lemauxi}, we end up with
\begin{equation}\label{eq:part2}
	\lim_{n\to\infty}D(\varphi) \varphi_x\left(\frac{u_n}{\varphi_x} \right)_x= D(\varphi) \varphi_x\left(\frac{u}{\varphi_x} \right)_x, \qquad \mbox{ in } L^2(\R).
\end{equation}
Thanks to \eqref{eq:part1} and \eqref{eq:part2}, passing to the limit as $n\to\infty$ in \eqref{eq:density}, we conclude that \eqref{identity}
holds true for any $u\in\cD(\cL)$ and the proof is complete.
\end{proof}

Now, we have all the tools to prove the stability of the point spectrum $\ptsp(\mathcal{L})$.
\begin{theorem}
\label{lempstab} 
Assume that $D,f$ satisfy \eqref{hypD}, \eqref{bistablef} and \eqref{eq:int-Df}. If $\varphi$ is the traveling front solutions to \eqref{degRD} studied in Section \ref{secstructure} and $\cL$ is the operator defined in \eqref{opL}, then
$\ptsp(\mathcal{L}) \subset (-\infty,0]$.
\end{theorem}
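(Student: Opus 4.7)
The plan is to leverage the identity \eqref{identity}, which by the previous proposition is now valid on all of $\cD(\cL)$, and test it against an arbitrary eigenfunction. Specifically, if $\lambda\in\ptsp(\cL)$ with eigenfunction $u\in\cD(\cL)\setminus\{0\}$, then substituting $\cL u = \lambda u$ into the identity produces
\[
\lambda \int_{\R} D(\varphi)|u|^{2}\,dx \;=\; -\int_{-\infty}^{\omega_{0}} D(\varphi)^{2}\varphi_{x}^{2}\left|\left(\frac{u}{\varphi_{x}}\right)_{x}\right|^{2}dx.
\]
The right-hand side is real and non-positive, while the coefficient of $\lambda$ on the left is real and non-negative, since $D(\varphi)\geq 0$. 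The whole argument flows from comparing these two quantities.

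I would then split into two cases. In the \emph{non-degenerate case}, when $\int_{\R}D(\varphi)|u|^{2}\,dx>0$, taking imaginary parts of both sides forces $\Im\lambda=0$, after which dividing by the positive coefficient gives $\lambda\leq 0$ at once.

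The \emph{degenerate case}, when $\int_{\R}D(\varphi)|u|^{2}\,dx=0$, is the only delicate point. Because $D(\varphi)>0$ strictly on $(-\infty,\omega_{0})$ by Lemma \ref{lemfinitetime}, this vanishing forces $u\equiv 0$ on $(-\infty,\omega_{0})$, so the eigenfunction is supported in $[\omega_{0},\infty)$ where $\varphi\equiv 0$. Consequently $D(\varphi)u\equiv 0$ in $L^{2}(\R)$, so $(D(\varphi)u)_{xx}=0$, and the eigenvalue equation collapses to $f'(\varphi)u = f'(0)u = \lambda u$. Since $u\not\equiv 0$, this determines $\lambda=f'(0)$, which is strictly negative by \eqref{bistablef}.

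The main subtlety is to notice and handle this degenerate case properly: one must recognise that $\cD(\cL)$ genuinely admits nontrivial eigenfunctions concentrated on $[\omega_{0},\infty)$, where the operator reduces to multiplication by $f'(0)<0$, rather than attempt to argue such eigenfunctions away via the identity alone (which becomes trivially $0=0$ on such functions). Once both cases are dealt with, we obtain $\lambda\in(-\infty,0]$ in either situation, and the theorem follows.
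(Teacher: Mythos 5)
Your argument is correct and rests on the same pillar as the paper's proof: testing the identity \eqref{identity} against an eigenfunction to get $\lambda \int_{-\infty}^{\omega_0} D(\varphi)|u|^2\,dx = -\int_{-\infty}^{\omega_0} D(\varphi)^2\varphi_x^2\bigl|\bigl(u/\varphi_x\bigr)_x\bigr|^2\,dx$ and reading off that $\lambda$ is real and non-positive. The one place where you genuinely go beyond the paper is the degenerate case $\int_\R D(\varphi)|u|^2\,dx=0$: the paper's proof passes directly from the displayed identity to ``$\lambda$ is real with $\lambda\le 0$'', tacitly assuming the coefficient of $\lambda$ is positive, whereas you correctly observe that the identity degenerates to $0=0$ when $u$ vanishes on $(-\infty,\omega_0)$ and that this case is not vacuous --- indeed every $u\in L^2(\R)$ supported in $[\omega_0,\infty)$ lies in $\cD(\cL)$ (since $D(\varphi)u\equiv 0$) and satisfies $\cL u=f'(0)u$, so $\lambda=f'(0)<0$ is an eigenvalue of infinite multiplicity and your second case is exactly what is needed to cover it. Your resolution of that case (the equation collapses to multiplication by $f'(0)$, forcing $\lambda=f'(0)<0$) is correct and consistent with how the paper itself treats the region $(\omega_0,\infty)$ in Corollary \ref{corsingleev} for the kernel; in short, your proof completes a small gap in the published argument while following the same route.
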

\begin{proof}
Suppose that $\lambda \in \ptsp(\mathcal{L}) \subset \C$. Let $u \in \cD(\cL)$ be a solution to
\[
 \mathcal{L} u = \lambda u.
\]
Multiply last equation by $D(\varphi)$ and take the $L^2$-complex product with $u$ to obtain
\begin{equation*}
 	\langle u, D(\varphi) \cL u \rangle_{L^2} = \langle u, D(\varphi) \lambda u \rangle_{L^2}.
\end{equation*}
By using \eqref{identity} and rearranging the terms we end up with
\begin{equation}
\label{stableEnerStim}
\lambda \int_{-\infty}^{\omega_0} D(\varphi) |u|^2 dx = - \int_{-\infty}^{\omega_0} D(\varphi)^2  \varphi_x ^2 \left| 
\left(  \frac{u}{\varphi_x} \right)_x \right|^2 dx.
\end{equation}
Since $D(\varphi) \geq 0$, this shows that $\lambda$ is real with $\lambda \leq 0$. The theorem is proved.
\end{proof}

\begin{corollary}
\label{corsingleev}
$\ker \mathcal{L} = \mathrm{span} \, \{ \varphi_x\}$, that is, the geometric multiplicity of $\lambda = 0$ is 
equal to 
one.
\end{corollary}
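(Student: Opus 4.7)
The plan is to read off the structure of $\ker \cL$ directly from the key identity \eqref{identity}, which by the previous extension argument now holds on the whole domain $\cD(\cL)$. Suppose $u \in \cD(\cL)$ satisfies $\cL u = 0$. Taking the $L^2$-complex product of $\cL u = 0$ with $D(\varphi) u$ and invoking \eqref{identity} gives
\[
0 = \langle u, D(\varphi) \cL u \rangle_{L^2} = -\int_{-\infty}^{\omega_0} D(\varphi)^2 \varphi_x^2 \left| \left( \frac{u}{\varphi_x} \right)_x \right|^2 dx,
\]
so the non-negative integrand must vanish almost everywhere on $(-\infty, \omega_0)$.

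On the strictly monotone side $(-\infty, \omega_0)$ we have $D(\varphi) > 0$ and $\varphi_x < 0$ pointwise, so the vanishing of the integrand forces $(u/\varphi_x)_x = 0$ a.e.\ on $(-\infty,\omega_0)$. Consequently $u/\varphi_x$ is constant on this interval, i.e.\ there exists $c \in \C$ such that $u(x) = c\,\varphi_x(x)$ for all $x \in (-\infty, \omega_0)$.

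It remains to determine $u$ on $(\omega_0, \infty)$. Since $\varphi \equiv 0$ there, we have $D(\varphi)u \equiv 0$ on $(\omega_0,\infty)$, and therefore the distributional derivative $(D(\varphi) u)_{xx}$ also vanishes on that interval. The eigenvalue equation $\cL u = (D(\varphi)u)_{xx} + f'(\varphi)u = 0$ thus reduces on $(\omega_0,\infty)$ to $f'(0)\, u = 0$. Because $f'(0) < 0$ by \eqref{bistablef}, we conclude $u \equiv 0$ on $(\omega_0,\infty)$. Since $\varphi_x \equiv 0$ on $(\omega_0,\infty)$ as well (Lemma~\ref{lemfinitetime}), the identity $u = c \varphi_x$ extends to all of $\R$, proving $\ker \cL \subset \mathrm{span}\{\varphi_x\}$. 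The reverse inclusion is precisely Lemma~\ref{lemitados}.

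The only subtle point in the argument is the reduction of $\cL u = 0$ on $(\omega_0, \infty)$ to the pointwise relation $f'(0)u = 0$; one must justify that $D(\varphi) u \equiv 0$ on $(\omega_0,\infty)$ makes the distributional term $(D(\varphi)u)_{xx}$ vanish on that open interval, which follows immediately from the fact that derivatives of the zero function in $H^2((\omega_0,\infty))$ are zero. Everything else is a clean reading of the integral identity combined with the monotonicity and finite-arrival information collected in Section~\ref{secstructure}.
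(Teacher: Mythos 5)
Your proof is correct and follows essentially the same route as the paper: apply the identity \eqref{identity} (extended to all of $\cD(\cL)$) to conclude $(u/\varphi_x)_x = 0$ a.e.\ on $(-\infty,\omega_0)$, hence $u = c\,\varphi_x$ there, and then use $D(0)=0$ together with $f'(0)<0$ to force $u\equiv 0$ on $(\omega_0,\infty)$. If anything, your step from $(u/\varphi_x)_x=0$ directly to $u=c\,\varphi_x$ is slightly cleaner than the paper's intermediate expression $u=\kappa_1\varphi_x+\kappa_2$ followed by a limit argument at $-\infty$.
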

\begin{proof}
Let $u \in \ker \mathcal{L}$. From the identity \eqref{stableEnerStim} and since $\varphi_x < 0$ and $D(\varphi) > 0$ for $x \in (-\infty, \omega_0)$, we deduce that $\lambda = 0$ if and only if
 \[
  \left( \frac{u}{\varphi_x}\right)_x = 0, \quad \text{a.e. in } \, x \in (-\infty, \omega_0),
 \]
yielding $u = \kappa_1 \varphi_x +\kappa_2  $ a.e. in $x \in (-\infty, \omega_0)$ for some constants $\kappa_j \in \C$. Taking the limit when $x \to - \infty$ we obtain $\kappa_2 = 0$ and hence $u = \kappa_1 \varphi_x$ a.e. in $x \in (-\infty, \omega_0)$. 

For $x \in (\omega_0, \infty)$ we know that $\varphi(x) = 0$ and the differential equation becomes $0 = \cL u = - f'(0) u$ because $D(0) = 0$. Since $f'(0) < 0$ we conclude that $u = 0 = \varphi_x$ a.e. in  $x \in (\omega_0,\infty)$ as well and this shows that $u \in \mathrm{span} \, \{ \varphi_x\}$. The corollary is proved.
\end{proof}

Let us now examine the algebraic multiplicity of the eigenvalue $\lambda = 0$. To that end, assume $u\in H^2(\R)$ and rewrite the operator $\mathcal{L}$ as
\[ 
\mathcal{L} u  = a_2(x) u_{xx} + a_1(x) u_x + a_0(x) u ,
\]
 where $a_2(x) = D(\varphi)$, $a_1(x) = 2 D(\varphi)_x$, and $a_0(x)= D(\varphi)_{xx} + f'(\varphi)$.
Consider the transformation
\[
w = z^{-1} u, \qquad z:=z(x) = \exp\left ( \int_{x_0}^{x} b(y) \, dy \right),
\]
with $b:=b(x)$ and $x_0 \in (-\infty, \omega_0)$ to be chosen later. Upon substitution,
\[ 
\mathcal{L}u = \mathcal{L}(wz) = z \big( a_2 w_{xx} + (2a_2 b +a_1) w_x + (a_2 (b_x +b^2 )+ a_1 b + 
a_0)w \big).
 \]
Choosing $ b(x) = - a_1(x) / 2a_2(x) $, the coefficient of $w_x$ vanishes and we obtain
\[ 
\mathcal{L}u = z (a_2 w_{xx} + c(x) w),
\]
where 
\[
c(x)= - \frac{1}{4} \frac{a_1(x)^2}{a_2(x)} - \frac{1}{2} \partial_x a_1(x)+ \frac{1}{2}\frac{a_1(x) 
\partial _x a_2(x)}{a_2(x)} + a_0(x) = f'(\varphi),
\]
after substituting the expressions of $a_2, a_1, a_0$ and their derivatives. Therefore, we arrive at the 
relation
\begin{equation}
\label{relationLadj}
 \mathcal{L} u = \mathcal{L}(zw) = z ( D(\varphi) w_{xx} + f'(\varphi)w ) = z \mathcal{L}^* w,
\end{equation}
where $\mathcal{L}^* : L^2(\R) \to L^2(\R)$ is the formal adjoint of the operator $\mathcal{L}$. 
We calculate $z$ explicitly; fix any $x_0 \in (-\infty, \omega_0)$ and let
\begin{align*}
	 z(x) & = \exp \left( - \int_0^x \frac{a_1(y)}{2 a_2(y)} \, dy\right) = \exp \left( - \int_0^x \frac{D(\varphi)_y}{D(\varphi)} \, dy\right) \\
	 &= \frac{D_0}{D(\varphi)}, \qquad \qquad x \in (-\infty,\omega_0),
\end{align*}
where $D_0 := D(\varphi(x_0)) > 0$. 
Therefore, if $u \in \cD(\cL)$ then $w = D_0^{-1}D(\varphi) u \in H^2(\R)$ because of the definition of $\cD(\cL)$, see \eqref{eq:D(L)}.

It is known \cite{Kat80} that if $\lambda \in \ptsp(\mathcal{L})$, $\mathcal{L}$ closed, then $\lambda^* \in 
\ptsp(\mathcal{L}^*)$ with same geometric and algebraic multiplicities. Thus, $0 \in \ptsp(\mathcal{L^*})$, 
associated to some eigenfunction $\psi \in H^2(\R)$. Equation \eqref{relationLadj}, however, establishes 
an isomorphism between the kernels of $\mathcal{L}$ and $\mathcal{L}^*$ and provides an explicit expression 
for $\psi$:
\begin{equation}
 \label{exprpsi}
 \begin{aligned}
 \psi(x) &= z(x)^{-1} \varphi_x = \frac{D(\varphi)}{D_0} \varphi_x \, \in \, H^2(\R),\\
 \mathcal{L}^* \psi &= z(x)^{-1} \mathcal{L} \varphi_x = 0.
 \end{aligned}
\end{equation}
At this point we define,
\begin{equation}
\label{defTheta}
\Theta := \langle \psi, \varphi_x \rangle_{L^2} = \langle (D(\varphi)/D_0) \varphi_x, \varphi_x 
\rangle_{L^2} = \int_\R \frac{D(\varphi)}{D_0} |\varphi_x|^2 \, dx > 0.
\end{equation}
 
Thus, we have proved the following
\begin{lemma}
 \label{lempre1}
$\lambda = 0$ is an eigenvalue of the formal adjoint $\cL^* : H^2(\R) \to 
L^2(\R)$, and there exists an eigenfunction $\psi \in H^2(\R)$ such that $\ker 
(\mathcal{L}^*) = \mathrm{span} \, \{ \psi \}$. Moreover, $\Theta = \langle \psi, \varphi_x \rangle_{L^2} > 
0$.
\end{lemma}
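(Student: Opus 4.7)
The plan is to assemble the facts derived in the text immediately preceding the statement into a concise verification of the three assertions: $\psi\in H^2(\R)$, $\ker(\cL^*)=\mathrm{span}\{\psi\}$, and $\Theta>0$.

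\textbf{Step 1: Regularity of $\psi$.} I would first check that $\psi:=(D(\varphi)/D_0)\varphi_x$ lies in $H^2(\R)$. By Lemma \ref{lemitados}, $\varphi_x\in\cD(\cL)$, and by the very definition \eqref{eq:D(L)} of the domain, this means $D(\varphi)\varphi_x\in H^2(\R)$. Since $D_0>0$ is a positive constant, $\psi\in H^2(\R)$ as claimed. Moreover, $\psi$ is supported on $(-\infty,\omega_0]$ because $\varphi_x\equiv 0$ on $[\omega_0,\infty)$.

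\textbf{Step 2: $\psi$ is in the kernel of $\cL^*$.} The transformation $u=zw$ with $z(x)=D_0/D(\varphi(x))$ on $(-\infty,\omega_0)$ yields the identity \eqref{relationLadj}, $\cL u = z\cL^* w$. Applying it to $u=\varphi_x\in\cD(\cL)$, which corresponds to $w=z^{-1}\varphi_x=\psi$, and using $\cL\varphi_x=0$ (Lemma \ref{lemitados}), we obtain $z\cL^*\psi=0$ on $(-\infty,\omega_0)$. Since $z>0$ there, $\cL^*\psi=0$ on $(-\infty,\omega_0)$. On $(\omega_0,\infty)$ we have $\varphi\equiv 0$ and $\psi\equiv 0$, so $\cL^*\psi=D(0)\psi_{xx}+f'(0)\psi=0$ trivially. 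Hence $\cL^*\psi=0$ in $L^2(\R)$.

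\textbf{Step 3: One-dimensionality.} To conclude $\ker(\cL^*)=\mathrm{span}\{\psi\}$, I would invoke the well-known fact (cf.\ Kato \cite{Kat80}) that, for a closed densely defined operator $\cL$, one has $\lambda\in\ptsp(\cL)$ if and only if $\lambda^*\in\ptsp(\cL^*)$ with equal geometric multiplicities. Since Corollary \ref{corsingleev} establishes that the geometric multiplicity of $0\in\ptsp(\cL)$ equals one, the same holds for $0\in\ptsp(\cL^*)$. Combined with Step 2 this forces $\ker(\cL^*)=\mathrm{span}\{\psi\}$.

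\textbf{Step 4: Positivity of $\Theta$.} Finally, for the inner product,
\[
\Theta = \langle\psi,\varphi_x\rangle_{L^2} = \int_{-\infty}^{\omega_0}\frac{D(\varphi(x))}{D_0}|\varphi_x(x)|^2\,dx,
\]
the integrand is non-negative because $D(\varphi)\ge 0$ and $D_0>0$, and is strictly positive on the whole interval $(-\infty,\omega_0)$ since $D(\varphi(x))>0$ and $\varphi_x(x)<0$ there (Lemma \ref{lemfinitetime}). Hence $\Theta>0$.

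The subtle point, and the only one requiring some care, is Step 3: one must ensure the abstract duality statement between $\ker\cL$ and $\ker\cL^*$ is applied to the closed operator $\cL$ with the domain \eqref{eq:D(L)} chosen precisely so that $\varphi_x$ lies in it, which is exactly the content of Lemma \ref{lemclosed} together with Lemma \ref{lemitados}. The remaining assertions are essentially bookkeeping of the explicit formula \eqref{exprpsi} and the definition \eqref{defTheta}.
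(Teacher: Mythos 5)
Your proposal is correct and follows essentially the same route as the paper: the explicit formula $\psi = (D(\varphi)/D_0)\varphi_x$ obtained from the transformation \eqref{relationLadj}, membership in $H^2(\R)$ via the definition \eqref{eq:D(L)} of $\cD(\cL)$ applied to $\varphi_x$, Kato's duality of geometric multiplicities combined with Corollary \ref{corsingleev} for one-dimensionality, and positivity of the integrand for $\Theta>0$. Your explicit verification on $(\omega_0,\infty)$ in Step 2 is a small added care the paper leaves implicit, but the argument is the same.
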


Finally, we show that $\lambda = 0$ is an isolated simple eigenvalue. 
\begin{lemma}\label{lemzerosimple}
 $\lambda = 0$ is an isolated eigenvalue of $\mathcal{L}$ with algebraic multiplicity equal to one. 
\end{lemma}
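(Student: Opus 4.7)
The plan is to prove algebraic multiplicity one by ruling out a generalized eigenvector at $\lambda = 0$, and to obtain isolation as a consequence of $\cL$ being Fredholm of index zero at the origin. By Corollary \ref{corsingleev} the geometric multiplicity equals one with $\ker \cL = \mathrm{span}\{\varphi_x\}$, so it suffices to show there is no $u \in \cD(\cL)$ satisfying $\cL u = \varphi_x$; equivalently, that $\varphi_x \notin \cR(\cL)$.

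Suppose, to obtain a contradiction, that such $u$ exists. Pairing with the adjoint null function $\psi = (D(\varphi)/D_0)\varphi_x \in H^2(\R)$ from Lemma \ref{lempre1} yields
\begin{equation*}
\<\psi, \cL u\>_{L^2} = \<\psi,\varphi_x\>_{L^2} = \Theta > 0.
\end{equation*}
The goal is to show, on the other hand, that this pairing must vanish. Writing $\cL u = (D(\varphi)u)_{xx} + f'(\varphi)u$ and using that $D(\varphi)u \in H^2(\R)$ by the very definition of $\cD(\cL)$, I would integrate by parts twice in the first term to transfer the derivatives onto $\psi$. All boundary contributions at $\pm\infty$ vanish: $\psi$ and $\psi_x$ tend to zero since $\psi \in H^2(\R)$ (and in fact $\psi$ is supported in $(-\infty,\omega_0]$ and decays exponentially as $x \to -\infty$ by Proposition \ref{algDecay}), while $D(\varphi)u$ and $(D(\varphi)u)_x$ tend to zero at $\pm\infty$ since $D(\varphi)u \in H^2(\R)$. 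This would produce $\<\psi, \cL u\>_{L^2} = \<D(\varphi)\psi_{xx} + f'(\varphi)\psi, u\>_{L^2} = \<\cL^*\psi, u\>_{L^2} = 0$, contradicting $\Theta > 0$ and forcing algebraic multiplicity equal to one.

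For isolation, the plan is to combine the newly obtained algebraic multiplicity one with the Fredholm property of $\cL$ at $\lambda=0$: once it is established in the remainder of Section \ref{secspectralstab} that the rest of the spectrum lies in $(-\infty,-\beta]$ for some $\beta > 0$, the Riesz projection onto the generalized eigenspace at $\lambda = 0$ is of rank one and separates $\lambda=0$ from the remainder of $\sigma(\cL)$, rendering it isolated. The main obstacle I anticipate is the rigorous justification of the integration by parts above, since elements of $\cD(\cL)$ need not individually lie in $H^2(\R)$; I would handle this by invoking Lemma \ref{lem:core} to approximate $u$ by a sequence $u_n \in H^2(\R)$, for which the identity $\<\psi, \cL u_n\>_{L^2} = \<\cL^* \psi, u_n\>_{L^2} = 0$ is immediate, and then pass to the limit using $u_n \to u$ and $\cL u_n \to \cL u$ in $L^2(\R)$ together with the bound $\|\psi\|_{L^2} < \infty$.
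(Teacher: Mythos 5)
Your proof is correct and follows essentially the same route as the paper: both rule out a Jordan chain of length two by showing that solvability of $\cL u = \varphi_x$ would force $\<\psi,\varphi_x\>_{L^2} = \Theta = 0$, contradicting \eqref{defTheta}, with isolation then following from the spectral gap established elsewhere in Section \ref{secspectralstab}. The only difference is technical: the paper transfers the equation to the formal adjoint via the transformation \eqref{relationLadj} and invokes Fredholm's alternative, whereas you derive the orthogonality $\<\psi,\cL u\>_{L^2}=\<\cL^*\psi,u\>_{L^2}=0$ directly by integration by parts, rigorously justified through the approximation Lemma \ref{lem:core} --- a slightly more self-contained justification of the same obstruction.
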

\begin{proof}
Consider a Jordan chain for $\lambda = 0$: suppose there exists $u_1 \in \cD(\cL)$ such that $\mathcal{L} 
u_1 = \varphi_x$. Then, equation \eqref{relationLadj} implies that
\begin{equation}\label{jChainLadj}
\mathcal{L}^* w_1 = z^{-1}\varphi_x = \frac{D(\varphi)}{D_0} \varphi_x,
\end{equation}
where $w_1 = z^{-1 }u_1 \in H^2(\R)$. By virtue of Fredholm's alternative, equation \eqref{jChainLadj} has a 
solution $w_1 \in H^2(\R)$ if and only if $z^{-1} \varphi_x \in (\ker \mathcal{L})^{\perp}$, that is,
\[
 \langle z^{-1} \varphi_x, \varphi_x \rangle_{L^2} = \langle \psi, \varphi_x \rangle_{L^2} = \Theta = 0,
\]
which is a contradiction, since \eqref{defTheta} holds true. This shows that the Jordan chain for the operator $\mathcal{L}$ has length 
equal to one, and the algebraic multiplicity of $\lambda =0$ is equal to one as well.
\end{proof}

To conclude this subsection, we observe that, in view of Theorem \ref{lempstab}, Corollary \ref{corsingleev} and Lemma \ref{lemzerosimple}, there is a \emph{spectral gap} for the point spectrum, 
namely, there exists $\mu_0>0$ such that
\[
\ptsp(\mathcal{L}) \subset (-\infty, - \mu_0] \cup \{ 0\}.
\]
This holds because the point spectrum comprises discrete eigenvalues and for any $\lambda \in \ptsp(\mathcal{L})$ with $\lambda \neq 0$, there is an eigenfunction $u \notin \text{span} \{ \varphi_x \}$, $u \neq 0$, for which identity \eqref{stableEnerStim} readily implies that $\lambda \in \R$, $\lambda < 0$. Hence, it suffices to take $\mu_0$ smaller than the modulus of the first strictly negative eigenvalue. Hence, we can refine the statement of Theorem \ref{lempstab} into the following result.

\begin{theorem}[point spectral stability]
\label{thmpstab} 
Under assumptions \eqref{hypD}, \eqref{bistablef} and \eqref{eq:int-Df}, the point spectrum of the linearized operator around the stationary diffusion degenerate Nagumo front is real and stable. More precisely, there exists $\mu_0 > 0$ such that
\begin{equation}
\label{ptspstabmu0}
\ptsp(\mathcal{L}) \subset (-\infty, - \mu_0] \cup \{ 0\}.
\end{equation}
Moreover, $\lambda = 0$ is a simple eigenvalue with associated eigenfunction $\varphi_x \in \cD(\cL)$.
\end{theorem}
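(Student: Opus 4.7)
The plan is to assemble Theorem \ref{thmpstab} from the results already established in this subsection, namely Theorem \ref{lempstab}, Corollary \ref{corsingleev} and Lemma \ref{lemzerosimple}, together with a discreteness argument that furnishes the spectral gap $\mu_0 > 0$.

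First, Theorem \ref{lempstab} yields $\ptsp(\cL) \subset (-\infty,0]$, so every eigenvalue is automatically real and nonpositive. Lemma \ref{lemitados} provides $\varphi_x \in \cD(\cL)$ with $\cL\varphi_x = 0$, hence $0\in\ptsp(\cL)$; Corollary \ref{corsingleev} identifies $\ker\cL=\mathrm{span}\{\varphi_x\}$ (geometric multiplicity one), and Lemma \ref{lemzerosimple} raises this to algebraic multiplicity one. Thus the second clause of the theorem, that $\lambda=0$ is a simple eigenvalue with eigenfunction $\varphi_x$, is already secured; only the spectral gap \eqref{ptspstabmu0} needs to be justified.

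For the gap, I would argue that the negative eigenvalues of $\cL$ form a discrete set not accumulating at $0$. The Rayleigh-type identity \eqref{stableEnerStim} suggests viewing $D(\varphi)\cL$ as a symmetric, nonpositive operator in the weighted inner product $\langle u,v\rangle_D := \langle u, D(\varphi)v\rangle_{L^2}$. Observe that any eigenfunction $u\in\cD(\cL)$ associated with an eigenvalue $\lambda\ne f'(0)$ must satisfy $u\equiv 0$ on $[\omega_0,\infty)$, because on that half-line $\cL u = f'(0)u = \lambda u$. The spectral problem therefore reduces, for all $\lambda$ near $0$, to a weighted eigenvalue problem on the effectively bounded interval $(-\infty,\omega_0]$. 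Using the change of variable $w = z^{-1}u$ with $z = D_0/D(\varphi)$ introduced around \eqref{relationLadj} (which casts $\cL$ into formally self-adjoint form), together with the exponential decay at $-\infty$ from Proposition \ref{algDecay} and the algebraic vanishing of $D(\varphi)$ at $\omega_0$, one verifies that the associated resolvent is compact on $L^2\bigl((-\infty,\omega_0);D(\varphi)\,dx\bigr)$. The spectral theorem then produces a sequence $0=\lambda_0>\lambda_1\geq\lambda_2\geq\cdots\to -\infty$ of eigenvalues, and taking any $\mu_0\in(0,|\lambda_1|]$ (or any $\mu_0>0$ if no strictly negative eigenvalue exists) gives \eqref{ptspstabmu0}.

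The hard part will be to carry out the compactness step rigorously in the presence of the degeneracy $D(\varphi(\omega_0))=0$: the endpoint $\omega_0$ is singular for the associated Sturm--Liouville problem, and the lack of $H^2$-regularity of $\varphi_x$ there (see Lemma \ref{lemC3}) forces a careful choice of domain compatible with \eqref{eq:D(L)}. Once the appropriate weighted Sobolev embedding is in place, a limit-point/limit-circle analysis at the singular endpoint $\omega_0$ combined with the exponentially regular behavior at $-\infty$ yields the discreteness of the point spectrum, and with it the spectral gap asserted in Theorem \ref{thmpstab}.
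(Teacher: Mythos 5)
Your reduction of the realness/nonpositivity and the simplicity of $\lambda=0$ to Theorem \ref{lempstab}, Corollary \ref{corsingleev}, Lemma \ref{lemitados} and Lemma \ref{lemzerosimple} is exactly the paper's assembly, and your observation that any nonzero eigenvalue is strictly negative (because the right-hand side of \eqref{stableEnerStim} vanishes only for $u\in\mathrm{span}\{\varphi_x\}$) is also the paper's argument. The divergence, and the problem, is in how you justify that the negative eigenvalues do not accumulate at $0$. The paper disposes of this in one line by invoking discreteness of the point spectrum and choosing $\mu_0$ below the modulus of the first strictly negative eigenvalue; you instead try to prove discreteness by showing that the weighted Sturm--Liouville problem on $(-\infty,\omega_0]$ with weight $D(\varphi)\,dx$ has compact resolvent. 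That claim is false: the interval is a genuine half-line, and as $x\to-\infty$ the operator $D(\varphi)\cL u=(D(\varphi)^2u_x)_x+D(\varphi)hu$ converges to the constant-coefficient operator $D(1)^2\partial_x^2+D(1)f'(1)$, i.e.\ in the weighted space to $D(1)\partial_x^2+f'(1)$, which produces essential spectrum filling $(-\infty,f'(1)]$ via Weyl sequences escaping to $-\infty$. Hence the resolvent is not compact, the spectrum is not a discrete sequence $0=\lambda_0>\lambda_1\geq\cdots\to-\infty$, and calling $(-\infty,\omega_0]$ ``effectively bounded'' (presumably because the right endpoint is finite) does not repair this; the obstruction sits at the left endpoint, where nothing is singular or confining. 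What is true, and all that is needed, is that the spectrum is discrete in the region $\Re\lambda>\max\{f'(0),f'(1)\}$, but that is a Fredholm/analytic-Fredholm statement rather than a compact-resolvent statement, and your proposal does not supply it.

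Compounding this, you explicitly defer ``the hard part'' --- the weighted embedding and the limit-point/limit-circle analysis at the degenerate endpoint $\omega_0$ --- which is precisely the content of the no-accumulation claim, so as written the proposal does not establish \eqref{ptspstabmu0}. A repair consistent with the structure of the paper is to argue that an accumulation point $\lambda_*$ of nonzero eigenvalues either lies in $\ptsp(\cL)$ or, since $\sigma(\cL)$ is closed, in $\sigma_\delta(\cL)\cup\sigma_\pi(\cL)$; the latter sets are confined to $(-\infty,-\min\{\mu_1,|f'(1)|\}]$ by Theorems \ref{thmstabsdelta} and \ref{lem:sd-sigmapi}, and the former possibility at $\lambda_*=0$ is excluded by the isolation of the simple eigenvalue $0$. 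This yields the gap with no compactness assertion at $-\infty$. Your reduction of the eigenvalue problem to $(-\infty,\omega_0]$ via $\cL u=f'(0)u$ on the dead zone is correct and matches the paper's use of the same observation in Corollary \ref{corsingleev}, but it does not by itself deliver discreteness.
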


\subsection{Parabolic regularization and stability of $\sigma_\delta$}

In this section, we apply the regularization technique developed by Leyva \emph{et al.} \cite{LeP20,LeLoP22} in order to determine the stability of a subset of the essential spectrum. Let $\varphi = \varphi(x)$ be the stationary degenerate Nagumo front and consider the following \emph{regularization} of the diffusion coefficient: for any $\epsilon > 0$, let us define
\begin{equation}
\label{regD}
D^\epsilon(\varphi) := D(\varphi) + \epsilon.
\end{equation}
Note that, for fixed $\epsilon > 0$, the coefficient $D^\epsilon(\varphi)$ is positive and uniformly bounded below, including the asymptotic limits, $D^\epsilon(0) = \epsilon > 0$ and $D^\epsilon(1) = D(1) + \epsilon > 0$. This allows us to invoke the standard Weyl's essential spectrum theorem (see \cite{KaPro13}) for the following family of linearized, regularized operators,
\begin{equation}
\label{perturbOpL}
\left\{
\begin{aligned}
&\cL^\epsilon : L^2(\R) \to L^2(\R),\\
&\cD(\cL^\epsilon) = H^2(\R),\\
&\cL^\epsilon u := (D^\epsilon(\varphi) u)_{xx} + f'(\varphi)u, \quad u \in \cD(\cL^\epsilon),
\end{aligned}
\right.
\end{equation}
for any $\epsilon > 0$. Here, $\cL^\epsilon$ is a densely defined, closed operator in $L^2(\R)$ for any $\epsilon>0$ (see Proposition 3.10 and Section 3.3 in \cite{LeLoP22}). Actually, as it was proved in Section \ref{sec:linearized}, also for $\epsilon = 0$, $\cL^0 = \cL$ is a densely defined, closed operator in $L^2(\R)$. Notice that $\cD(\cL^\epsilon) = H^2(\R)\subset\cD(\cL)$, for any $\epsilon>0$.

At this point, let us define the region of \emph{consistent splitting} (cf. \cite{AGJ90,San02,KaPro13})
\[
\Omega:= \left\{ \lambda \in \C \, : \, \Re \lambda > \max \{ f'(0), \, f'(1) \} \right\},
\]
with $\Omega$ that does not depend on $\epsilon$. In this region, the coefficients of the spectral first order system (when the spectral equation for $\cL^\epsilon$ is recast as a dynamical system) have no center eigenspace and the dimensions of the stable/unstable manifolds coincide (they are consistent); see Section 4 in \cite{LeLoP22} or Chapter 3 in \cite{KaPro13}. Hence it is very natural to relate the Fredholm properties of the operator with those of its asymptotic counterpart. Hence, standard methods immediately imply the following result.
\begin{lemma}
\label{lemLep}
For any $\epsilon > 0$ and for each $\lambda \in \Omega$, the operator $\cL^\epsilon - \lambda$ is Fredholm with zero index.
\end{lemma}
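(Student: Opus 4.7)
The plan is to apply Weyl's essential spectrum theorem for asymptotically autonomous second-order elliptic operators (as in Chapter 3 of \cite{KaPro13}) to the regularized operator $\cL^\epsilon$. The entire point of the shift $D^\epsilon(\varphi) = D(\varphi) + \epsilon$ is that the leading coefficient is now uniformly bounded below by $\epsilon > 0$, so $\cL^\epsilon$ is uniformly elliptic with bounded coefficients on $\R$, and standard Fredholm theory becomes directly applicable.

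Concretely, I would first rewrite $\cL^\epsilon$ in non-divergence form, $\cL^\epsilon u = D^\epsilon(\varphi)\, u_{xx} + 2 D^\epsilon(\varphi)_x\, u_x + \bigl(D^\epsilon(\varphi)_{xx} + f'(\varphi)\bigr)\, u$, and use Lemma \ref{lemC3} together with the $C^2$ regularity of $D$ and $f$ to confirm that all three coefficients are bounded on $\R$ (the zero-order coefficient is only piecewise continuous because of the jump of $\varphi_{xx}$ at $\omega_0$, but boundedness is all that is needed). Second, I would identify the asymptotic constant-coefficient operators. On the non-degenerate side, Proposition \ref{algDecay} gives $\varphi \to 1$ and $\varphi_x,\varphi_{xx} \to 0$ exponentially as $x \to -\infty$, yielding the limit operator $\cL_-^\epsilon u := (D(1) + \epsilon)\, u_{xx} + f'(1)\, u$. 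On the degenerate side, since $\varphi \equiv 0$ for $x \geq \omega_0$, the coefficients of $\cL^\epsilon$ are \emph{identically} equal on $[\omega_0, \infty)$ to those of $\cL_+^\epsilon u := \epsilon\, u_{xx} + f'(0)\, u$. Third, a Fourier transform computation gives $\sigma(\cL_-^\epsilon) = (-\infty, f'(1)]$ and $\sigma(\cL_+^\epsilon) = (-\infty, f'(0)]$, so that $\sigma(\cL_-^\epsilon) \cup \sigma(\cL_+^\epsilon)$ is disjoint from $\Omega$ by the very definition of the latter set. Weyl's theorem then yields $\ess(\cL^\epsilon) \subseteq \sigma(\cL_-^\epsilon) \cup \sigma(\cL_+^\epsilon)$, so $\cL^\epsilon - \lambda$ is Fredholm with zero index for every $\lambda \in \Omega$. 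Equivalently, one can recast the spectral equation as a first-order system $\mathbf{U}_x = \mathbf{A}^\epsilon(x;\lambda)\mathbf{U}$, verify that the asymptotic matrices $\mathbf{A}_\pm^\epsilon(\lambda)$ are hyperbolic with consistent stable/unstable splitting for $\lambda \in \Omega$ (one of each on each side, as follows from the quadratic dispersion relations), and invoke Palmer's theorem.

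The main ``obstacle'' is in a sense absent for $\epsilon > 0$: the uniform ellipticity furnished by the regularization makes this a clean application of the standard theory. What is genuinely noteworthy is how sharply the argument fails when $\epsilon = 0$: then $D(\varphi(x))$ vanishes at $x = \omega_0$ and on $[\omega_0, \infty)$, the asymptotic first-order matrix on the right loses hyperbolicity, and Weyl's essential spectrum theorem can no longer be invoked. This is precisely the obstruction that motivates the regularization itself, and that will later force the authors to analyze $\cL = \cL^0$ through the refined singular-sequence / compression-spectrum decomposition $\sigma = \ptsp \cup \sppi \cup \spd$ of Definition \ref{defspecd}. For fixed $\epsilon > 0$ that difficulty simply does not arise, and the lemma serves as the clean building block on which the parabolic-regularization approach to locating $\spd(\cL)$ will be built.
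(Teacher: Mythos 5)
Your proposal is correct and takes essentially the same route as the paper: the paper's proof simply defers to Lemma 4.2 of \cite{LeLoP22} (with $a=0$), whose content is exactly the uniform-ellipticity-after-regularization plus asymptotic-hyperbolicity argument you give, and the first-order-system/exponential-dichotomy version you sketch is the one the authors themselves write out in the proof of Lemma \ref{lemLep2}. No gaps to report.
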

\begin{proof}
See the proof of Lemma 4.2 in \cite{LeLoP22} with $a = 0$.
\end{proof}

However, since we are studying a stationary front with zero speed, $c = 0$, and its linearization in an unweighted Sobolev energy space (for which $a = 0$ in the nomeclature of \cite{LeLoP22}), we are able to provide more information. Let us take a look at the aforementioned equivalent first order system. Recast the spectral problem $(\cL^\ep - \lambda)u = 0$, $u \in D(\cL^\ep) = H^2(\R)$, as
\begin{equation}
\label{fosyst}
\bw_x = \bA(x,\lambda) \bw,
\end{equation}
where
\[
\bw = \begin{pmatrix} u \\ u_x \end{pmatrix} \in H^2(\R) \times H^1(\R), \quad \bA(x,\lambda) = \begin{pmatrix} 0 & 1 \\ D^\ep(\varphi)^{-1}(\lambda - b_0^\ep(x)) & - D^\ep(\varphi)^{-1} b_1^\ep(x) \end{pmatrix},
\]
\[
b_1^\ep(x) := 2 D^\ep(\varphi)_x, \quad b_0^\ep(x) := D^\ep(\varphi)_{xx} + f'(\varphi),
\]
(see, e.g., \cite{LeLoP22}). Setting
\[
\bA_\pm^\ep(\lambda) := \lim_{x \to \pm \infty } \bA(x,\lambda),
\]
we obtain
\[
\bA_\pm^\ep(\lambda) = \begin{pmatrix} 0 & 1 \\ D^\ep(u_\pm)^{-1}(\lambda - f'(u_\pm)) & 0 \end{pmatrix},
\]
with $u_+ = 0$, $u_- = 1$. The boundary of the essential spectrum of $\cL^\ep$ is determined by the following dispersion relation (see \cite{KaPro13,San02,LeLoP22}),
\[
\pi^\ep_{\pm}(k,\lambda) = \det (\bA_\pm^\ep(\lambda) - ik I) = 0, \qquad k \in \R,
\]
which determines the complex values of $\lambda$ for which the matrices $\bA_\pm^\ep(\lambda)$ fail to be hyperbolic. This dispersion relation defines two curves in the complex plane (known as Fredholm borders),
\[
\lambda_{\pm}^\ep(k) = - D^\ep(u_\pm) k^2 + f'(u_\pm), \qquad k \in \R.
\]
Since $f'(1), f'(0) < 0$, we notice that both curves lie in the negative real axis, 
\[
\begin{aligned}
\lambda_{+}^\ep(k) &= - D^\ep(0) k^2 + f'(0) = - \ep k^2 + f'(0) < 0,\\
\lambda_{-}^\ep(k) &= - D^\ep(1) k^2 + f'(0) = - (\ep + D(1)) k^2 + f'(1) < 0.
\end{aligned}
\]
Consequently we have the following refinement of Lemma \ref{lemLep}.
\begin{lemma}
\label{lemLep2}
Define
\[
\mu_1 := \min \{ |f'(1)|, |f'(0)|\} > 0.
\]
Then for each $\lambda \in \C \backslash (-\infty, \mu_1]$ and every $\ep > 0$, the operator $\cL^\epsilon - \lambda$ is Fredholm with zero index.
\end{lemma}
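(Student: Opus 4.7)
The plan is to upgrade Lemma \ref{lemLep} by replacing the half-plane $\Omega$ with the larger slit domain $\C\setminus(-\infty,-\mu_1]$, exploiting the fact that for the family $\cL^\ep$ the Fredholm borders have already been computed explicitly and, crucially, lie on the negative real axis.

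First, I would record the Fredholm borders, which the paper has already derived just before the statement:
\[
\lambda_+^\ep(k)=-\ep k^2+f'(0),\qquad \lambda_-^\ep(k)=-(\ep+D(1))k^2+f'(1),\qquad k\in\R.
\]
Since $f'(0),f'(1)<0$, each curve is a real half-line, and their union is contained in $(-\infty,\max\{f'(0),f'(1)\}]=(-\infty,-\mu_1]$. The idea then is to show that for $\lambda$ outside this half-line, both asymptotic coefficient matrices $\bA_\pm^\ep(\lambda)$ are hyperbolic (no purely imaginary eigenvalue), because the dispersion relation $\pi_\pm^\ep(k,\lambda)=0$ has no real solution $k\in\R$ outside $(-\infty,-\mu_1]$. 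Concretely, $\bA_\pm^\ep(\lambda)$ is hyperbolic if and only if $\lambda\notin\{\lambda_\pm^\ep(k):k\in\R\}$, so for $\lambda\in\C\setminus(-\infty,-\mu_1]$ both matrices are hyperbolic.

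Second, I would invoke the consistent splitting property. Because the wave is stationary ($c=0$) and we are in the unweighted space, the dimensions of the stable subspaces of $\bA_+^\ep(\lambda)$ and $\bA_-^\ep(\lambda)$ coincide (each matrix is a $2\times 2$ hyperbolic matrix with one positive and one negative eigenvalue). Hence in the open connected set $\C\setminus(-\infty,-\mu_1]$ the first-order system \eqref{fosyst} admits exponential dichotomies on $\R^+$ and $\R^-$ with matching Morse indices. By standard results relating exponential dichotomies to Fredholm properties (Palmer's theorem; see also Sandstede \cite{San02} and Chapter 3 of \cite{KaPro13}, which were the tools used in the proof of Lemma 4.2 of \cite{LeLoP22}), the operator $\cL^\ep-\lambda$ is then Fredholm on this set, with index equal to the difference of the Morse indices, namely zero.

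Finally, I would use constancy of the Fredholm index on connected components of the Fredholm domain: the slit domain $\C\setminus(-\infty,-\mu_1]$ is open and connected, it contains $\Omega$, and on $\Omega$ the index is zero by Lemma \ref{lemLep}; hence the index is zero everywhere in the slit domain. The only minor subtlety — and the step I expect to require the most care — is verifying that the hyperbolicity and dichotomy arguments go through \emph{uniformly for each fixed $\ep>0$} with the precise form of $D^\ep(\varphi)$ used here; this is exactly the point at which the non-degeneracy $D^\ep(u_\pm)\geq\ep>0$ is essential and differentiates $\cL^\ep$ from the degenerate operator $\cL^0=\cL$. Everything else is a direct translation of the argument already used in \cite{LeLoP22}, now sharpened by the explicit location of the Fredholm borders on $(-\infty,-\mu_1]$.
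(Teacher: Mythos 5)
Your proposal is correct and follows essentially the same route as the paper: hyperbolicity of the asymptotic matrices $\bA_\pm^\ep(\lambda)$ off the explicitly computed Fredholm borders, exponential dichotomies on both rays with matching Morse indices (each asymptotic matrix is traceless and hyperbolic, so $i^+=i^-=1$), and Palmer/Sandstede to conclude Fredholm with index $i^+-i^-=0$. Your final step (constancy of the index on the connected slit domain, anchored by Lemma \ref{lemLep} on $\Omega$) is a valid but redundant alternative to the direct Morse-index count, and your sharper half-line $(-\infty,-\mu_1]$ subsumes the paper's stated $(-\infty,\mu_1]$.
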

\begin{proof}
Fix $\lambda \in \C \backslash (-\infty, \mu_1]$ and $\ep > 0$. Then the matrices $\bA_\pm^\ep(\lambda)$ are clearly hyperbolic. Hence, from exponential dichotomies theory (see Coppel \cite{Cop78}, or Theorem 3.3 in \cite{San02}), the system \eqref{fosyst} is endowed with exponential dichotomies on both rays, $(-\infty,0)$ and $(0,\infty)$, with Morse indices $i^+ = \dim U_+^\ep(\lambda) = 1$ and $i^-(\lambda) = \dim S_{-}^\ep(\lambda) = 1$, respectively (see Lemma 4.1 in \cite{LeLoP22} in the particular case $a = 0$ and $c = 0$; here $U_\pm^\ep(\lambda)$ and $S_\pm^\ep(\lambda)$ denote the unstable and stable spaces of $\bA_\pm^\ep(\lambda)$, respectively). Therefore, Theorem 3.2 in Sandstede \cite{San02} implies that the operators $\partial_x - \bA^\ep(x,\lambda)$ (and hence the operators $\cL^\ep - \lambda$) are Fredholm with index equal to $i^+ - i^- = 0$. This proves the result.
\end{proof}

This last Lemma readily implies the stability of the essential spectrum of each $\cL^\epsilon$ (directly from the definition of $\ess$). In the limit when $\epsilon \to 0$, however, we apply Kato's stability theorem (see \cite{Kat80}, p. 235) which demands the range of the operator to be closed. Therefore, the definition of $\sigma_\delta$ is designed to sort out the values of $\lambda$ for which the range of $\cL- \lambda$ is closed. As a result, we are not able to control the whole essential spectrum of $\cL$ with this regularization technique, but only a subset. The following result is the main tool to locate the subset of the compression spectrum, $\sigma_\delta(\cL)$, in the degenerate front case. 
\begin{lemma}
\label{lemLsemi}
Suppose that $\cL - \lambda$ is semi-Fredholm for some $\lambda \in \C$. Then for each $0 < \epsilon \ll 1$ sufficiently small $\cL^\epsilon - \lambda$ is semi-Fredholm and $\ind(\cL^\epsilon - \lambda) = \ind(\cL - \lambda)$.
\end{lemma}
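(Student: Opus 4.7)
My plan is to establish that $\cL^\epsilon$ converges to $\cL$ in the generalized sense (i.e., in the gap metric on closed operators acting in $L^2(\R)$) as $\epsilon \to 0^+$, and then invoke Kato's stability theorem for the semi-Fredholm index. Since on the common core $H^2(\R) \subset \cD(\cL)$ one has $\cL^\epsilon u - \cL u = \epsilon u_{xx}$, the perturbation is \emph{unbounded} relative to $\cL$, so norm convergence of operators is hopeless; gap convergence, however, is the correct substitute and is tailor-made to preserve Fredholm-type indices (see Kato \cite{Kat80}, \S IV.2 and \S IV.5).

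First, I would verify the ``upper semicontinuity of graphs'': for each $u \in \cD(\cL)$, I need to exhibit $u_\epsilon \in \cD(\cL^\epsilon) = H^2(\R)$ with $u_\epsilon \to u$ and $\cL^\epsilon u_\epsilon \to \cL u$ in $L^2(\R)$. Lemma \ref{lem:core} furnishes a sequence $u_n \in H^2(\R)$ with $u_n \to u$ and $\cL u_n \to \cL u$ in $L^2(\R)$. For each fixed $n$, the quantity $\|(u_n)_{xx}\|_{L^2}$ is finite. A diagonal extraction then selects an index $n(\epsilon)\to\infty$ as $\epsilon\to 0^+$ such that $\|u_{n(\epsilon)}-u\|_{L^2}$, $\|\cL u_{n(\epsilon)} - \cL u\|_{L^2}$, and, crucially, $\epsilon \|(u_{n(\epsilon)})_{xx}\|_{L^2}$ all tend to zero simultaneously. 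Setting $u_\epsilon := u_{n(\epsilon)}$ yields $\cL^\epsilon u_\epsilon = \cL u_\epsilon + \epsilon (u_\epsilon)_{xx} \to \cL u$ in $L^2(\R)$, as required.

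Second, I would verify the ``lower semicontinuity of graphs'': for any sequence $u_\epsilon \in H^2(\R)$ with $u_\epsilon \to u$ and $\cL^\epsilon u_\epsilon \to v$ in $L^2(\R)$, I must show $u \in \cD(\cL)$ and $\cL u = v$. Write $\cL^\epsilon u_\epsilon = ((D(\varphi)+\epsilon)u_\epsilon)_{xx} + f'(\varphi) u_\epsilon$. Boundedness of $f'(\varphi)$ together with $u_\epsilon \to u$ gives $f'(\varphi) u_\epsilon \to f'(\varphi) u$ in $L^2$, so $((D(\varphi)+\epsilon)u_\epsilon)_{xx} \to v - f'(\varphi) u$ in $L^2$. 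Since also $(D(\varphi)+\epsilon)u_\epsilon \to D(\varphi)u$ in $L^2$, closedness of $\partial_{xx}$ on $H^2(\R)$ forces $D(\varphi) u \in H^2(\R)$ with $(D(\varphi)u)_{xx} = v - f'(\varphi) u$, that is, $u \in \cD(\cL)$ and $\cL u = v$.

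Combining the two steps yields $\hat{\delta}(\cL^\epsilon, \cL) \to 0$ in the gap metric. Kato's stability theorem for semi-Fredholm operators (Theorems IV.5.17 and IV.5.22 in \cite{Kat80}) then asserts that the set of semi-Fredholm operators is open in the gap topology and that the index is locally constant; hence, whenever $\cL - \lambda$ is semi-Fredholm, so is $\cL^\epsilon - \lambda$ for all $\epsilon>0$ sufficiently small, with $\ind(\cL^\epsilon - \lambda) = \ind(\cL - \lambda)$. The main obstacle I anticipate is the diagonalization in the upper semicontinuity step: Lemma \ref{lem:core} gives $L^2$-convergence of both $u_n$ and $\cL u_n$, but the individual $H^2$-norms of the approximants may diverge (this is precisely why $\epsilon\partial_{xx}$ is not a relatively bounded perturbation of $\cL$), so choosing $n(\epsilon)$ carefully, taking into account the unrelated quantities $\|(u_n)_{xx}\|_{L^2}$, is essential to make the triple limit work.
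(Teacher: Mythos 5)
Your overall strategy --- generalized (gap) convergence of $\cL^\epsilon$ to $\cL$ followed by Kato's stability theorem for the semi-Fredholm index --- is exactly the route the paper takes (it defers to Lemma 4.7 of \cite{LeLoP22}, adjusted for the larger domain $\cD(\cL)$ via Lemma \ref{lem:core}). However, there is a genuine gap at the sentence ``combining the two steps yields $\hat{\delta}(\cL^\epsilon,\cL)\to 0$.'' What your two steps establish is \emph{strong graph convergence}: for each fixed $u\in\cD(\cL)$ you produce approximants, and for each convergent sequence on the graphs of the $\cL^\epsilon$ you identify the limit point as lying on the graph of $\cL$. The gap, by contrast, is a \emph{uniform} quantity, $\delta(\cL^\epsilon-\lambda,\cL-\lambda)=\sup\,\mathrm{dist}\bigl((u,(\cL^\epsilon-\lambda)u),G(\cL-\lambda)\bigr)$ taken over the unit sphere of the graph of $\cL^\epsilon-\lambda$, and Kato's Theorems IV.5.17 and IV.5.22 require this supremum to fall below a threshold set by the reduced minimum modulus $\gamma(\cL-\lambda)$. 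Pointwise graph convergence does not imply this: it is the analogue of strong resolvent convergence, which is well known not to preserve spectra or Fredholm indices, whereas gap convergence is the analogue of norm resolvent convergence.

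Concretely, the missing ingredient is a uniform a priori estimate. For $u\in H^2(\R)$ the natural comparison point on $G(\cL-\lambda)$ is $(u,(\cL-\lambda)u)$ itself (legitimate since $H^2(\R)\subset\cD(\cL)$), and the distance is $\epsilon\|u_{xx}\|_{L^2}$; one must show $\epsilon\|u_{xx}\|_{L^2}\le C(\epsilon)\bigl(\|u\|_{L^2}^2+\|(\cL^\epsilon-\lambda)u\|_{L^2}^2\bigr)^{1/2}$ with $C(\epsilon)\to0$ \emph{uniformly} in $u$ (an energy estimate exploiting $D(\varphi)+\epsilon\ge\epsilon$), together with a companion uniform bound for the reverse direction $\delta(\cL-\lambda,\cL^\epsilon-\lambda)$, where Lemma \ref{lem:core} enters only to reduce the supremum over the unit ball of $G(\cL-\lambda)$ to elements of $H^2(\R)\times L^2(\R)$. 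This uniform estimate is precisely the technical content of Lemma 4.7 in \cite{LeLoP22} that the paper's proof invokes. Your diagonal extraction in the first step cannot substitute for it, because the index $n(\epsilon)$ you select depends on the individual $u$, and the quantities $\|(u_n)_{xx}\|_{L^2}$ are not controlled by the graph norm; your second step assumes convergence of $\cL^\epsilon u_\epsilon$, which an arbitrary unit-graph-norm family need not satisfy.
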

\begin{proof}
This is a particular case (with $a = 0$) of Lemma 4.7 in \cite{LeLoP22}, with the slight caveat that $\cD(\cL^\ep) = H^2(\R) \neq \cD(\cL)$. We know, however, that $H^2(\R) $ is densely embedded into $\cD(\cL)$ and the infima of the difference norm over the unit ball on the graph of $\cL - \lambda$  can be taken on a subset of $H^2 \times L^2$ (details are left the reader). The rest of the proof follows that of Lemma 4.7 in \cite{LeLoP22}, word by word.
\end{proof}

With this information at hand, let us now locate the subset $\sigma_\delta(\cL)$ of the compression spectrum.

\begin{theorem}\label{thmstabsdelta}
The $\sigma_\delta$-spectrum of $\cL$ is stable. More precisely, 
\[
\sigma_\delta(\cL) \subset (-\infty, -\mu_1].
\]
\end{theorem}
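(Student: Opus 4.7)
The plan is to prove this by contradiction, exploiting the parabolic regularization machinery that has already been set up in Lemmas \ref{lemLep2} and \ref{lemLsemi}. The key observation is that membership in $\sigma_\delta(\cL)$ automatically upgrades $\cL - \lambda$ to a semi-Fredholm operator, and the regularization then transfers Fredholm index information from $\cL^\ep - \lambda$ (which is well understood) back to $\cL - \lambda$.

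More precisely, I would argue as follows. Fix $\lambda \in \sigma_\delta(\cL)$. By Definition \ref{defspecd}, the operator $\cL - \lambda$ is injective (so $\nul(\cL - \lambda) = 0$ is finite) and $\cR(\cL - \lambda)$ is closed. Hence $\cL - \lambda$ is semi-Fredholm. Lemma \ref{lemLsemi} then applies and yields, for every sufficiently small $\ep > 0$, that $\cL^\ep - \lambda$ is semi-Fredholm with $\ind(\cL^\ep - \lambda) = \ind(\cL - \lambda)$. The next step is to rule out the possibility that $\lambda \notin (-\infty, -\mu_1]$. Indeed, if $\lambda \in \C \setminus (-\infty, -\mu_1]$, Lemma \ref{lemLep2} says that $\cL^\ep - \lambda$ is Fredholm with zero index, so $\ind(\cL - \lambda) = 0$ as well. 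Combining this with $\nul(\cL - \lambda) = 0$ forces $\mathrm{def}(\cL - \lambda) = 0$, i.e., $\cR(\cL - \lambda) = L^2(\R)$, which contradicts the defining condition $\cR(\cL - \lambda) \neq L^2(\R)$ for $\lambda \in \sigma_\delta(\cL)$. Thus necessarily $\lambda \in (-\infty, -\mu_1]$, which is precisely the inclusion $\sigma_\delta(\cL) \subset (-\infty, -\mu_1]$ that we wanted.

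The argument is really a direct assembly of the two preparatory lemmas, so there is no new analytic difficulty to overcome here. The only subtlety worth flagging is the mismatch of domains ($\cD(\cL^\ep) = H^2(\R)$ versus the larger degeneracy-adapted domain $\cD(\cL)$), but this is already absorbed into the statement of Lemma \ref{lemLsemi}, whose proof handles the difference via the density of $H^2(\R)$ in $\cD(\cL)$ and by taking infima of the graph-norm distance on the $H^2 \times L^2$ subset. Hence, modulo invoking that lemma, the proof is essentially a one-paragraph contradiction argument and should be presented as such.
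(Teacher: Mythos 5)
Your proposal is correct and follows essentially the same route as the paper: both arguments observe that $\lambda \in \sigma_\delta(\cL)$ makes $\cL - \lambda$ injective with closed, proper range (hence semi-Fredholm with nonzero index), transfer the index to $\cL^\ep - \lambda$ via Lemma \ref{lemLsemi}, and derive a contradiction with Lemma \ref{lemLep2} for $\lambda$ outside $(-\infty,-\mu_1]$. The only cosmetic difference is that the paper records $\ind(\cL-\lambda)\neq 0$ up front and contradicts the zero index from Lemma \ref{lemLep2}, whereas you deduce $\ind(\cL-\lambda)=0$ and contradict $\cR(\cL-\lambda)\neq L^2(\R)$; these are the same argument in a different order.
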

\begin{proof}
First, it is to be observed that
\[
\sigma_\delta(\cL) \subset \{ \lambda \in \C \, : \, \cL - \lambda \; \text{is semi-Fredholm with ind}  (\cL - \lambda) \neq 0 \}. 
\]
Indeed, if $\lambda \in \sigma_\delta(\cL)$ then from Definition \ref{defspecd} we know that $\cL - \lambda : L^2(\R) \to L^2(\R)$ is injective, $\cR(\cL - \lambda)$ is closed in $L^2(\R)$ and $\cR(\cL - \lambda) \subsetneqq L^2(\R)$. This implies that $\nul (\cL - \lambda) = 0$ and consequently $\cL - \lambda$ is semi-Fredholm. Moreover, since def$\,(\cL - \lambda) =$ codim$\,(\cL - \lambda) > 0$ we have that $\ind(\cL - \lambda) \neq 0$.

Now suppose that $\lambda \in \sigma_\delta(\cL)$. Then from Lemma \ref{lemLsemi} we conclude that for every $0 < \epsilon \ll 1$ sufficiently small, $\cL^\epsilon - \lambda$ is semi-Fredholm with $\ind(\cL^\epsilon - \lambda) = \ind(\cL - \lambda) \neq 0$. This implies that $\lambda \not\in \C \backslash (-\infty, \mu_1]$. This shows that $\sigma_\delta(\cL) \subset (-\infty, \mu_1]$ and the theorem is proved.
\end{proof}

\subsection{Singular sequences and the location of $\sigma_\pi$}
In this section we control the remaining subset of the spectrum, namely, $\sigma_{\pi}(\cL)$. To this end, we use the technique developed in \cite{LeLoP22} and improve it by establishing a \emph{spectral gap}, an important property that could be used to study the nonlinear stability of the front. The idea is to prove that singular sequences (see Remark~\ref{remponla}) disperse the $L^2$-mass of perturbations to infinity, where the sign of the coefficients of $\cL$ is known. 
\begin{theorem}\label{lem:sd-sigmapi}
The $\sigma_{\pi}$-spectrum of the linearized operator $\cL$ around a stationary degenerated Nagumo front satisfies
$\sigma_{\pi}(\cL) \subset (-\infty, f'(1)]$.
\end{theorem}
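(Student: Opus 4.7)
The plan is to use the Weyl-sequence characterization of $\sppi(\cL)$ together with an asymptotic analysis at $x \to -\infty$, in the spirit of Leyva et al.\ \cite{LeP20, LeLoP22}. Fix $\lambda \in \sppi(\cL)$. Since $L^2(\R)$ is reflexive, Remark \ref{remponla} yields a singular sequence $u_n \in \cD(\cL)$ with $\|u_n\|_{L^2} = 1$, $(\cL - \lambda) u_n \to 0$ in $L^2$, and $u_n \rightharpoonup 0$ weakly. I would then show that any such $u_n$ must concentrate its $L^2$-mass at $x \to -\infty$, where $\varphi$ tends to the non-degenerate state $1$ and $\cL$ asymptotes to the constant-coefficient operator $\cL_\infty := D(1)\partial_x^2 + f'(1)$, whose $L^2$-spectrum is exactly $(-\infty, f'(1)]$.

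First I would eliminate the right half-line. On $(\omega_0, \infty)$ the profile vanishes, so $\cL u = f'(0) u$ pointwise. Every $L^2$ function supported in $(\omega_0, \infty)$ automatically lies in $\cD(\cL)$ (as $D(\varphi)\equiv 0$ there, whence $D(\varphi)u \equiv 0 \in H^2(\R)$) and is an eigenfunction of $\cL$ associated with the eigenvalue $f'(0)$; hence $f'(0) \in \ptsp(\cL)$, and by the disjointness in Definition \ref{defspecd}, $\lambda \neq f'(0)$. This yields
\[
\|u_n\|_{L^2(\omega_0, \infty)} \leq |f'(0) - \lambda|^{-1}\|(\cL - \lambda)u_n\|_{L^2} \to 0.
\]
Next, since $(D(\varphi)u_n)_{xx} = (\cL - \lambda) u_n + \lambda u_n - f'(\varphi)u_n$ is uniformly bounded in $L^2$, one has $\|D(\varphi) u_n\|_{H^2(\R)} \leq C$. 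Combined with $u_n \rightharpoonup 0$, the Rellich--Kondrachov theorem gives $D(\varphi) u_n \to 0$ strongly in $L^2_{\mathrm{loc}}(\R)$; on any compact $K \subset (-\infty, \omega_0)$, $\inf_K D(\varphi) > 0$, so $\|u_n\|_{L^2(K)} \to 0$. The remaining task is to rule out concentration in a small left neighborhood $(\omega_0 - \eta, \omega_0)$ of the degenerate end point. For this I would invoke the asymptotic expansion $D(\varphi(x)) \sim c_0 (\omega_0 - x)^2$ from Proposition \ref{algDecay}, together with Lemma \ref{lemauxi}, the Sobolev embedding $H^2 \hookrightarrow C^{1,1/2}$ applied to $D(\varphi)u_n$, and the identity of Proposition \ref{lemidentity} evaluated on $u_n$,
\[
\lambda \int_\R D(\varphi) |u_n|^2 \, dx + \int_{-\infty}^{\omega_0} D(\varphi)^2 \varphi_x^2 \left| \left(\frac{u_n}{\varphi_x}\right)_x\right|^2 dx = o(1),
\]
in order to derive a tailored energy estimate forcing $\|u_n\|_{L^2(\omega_0 - \eta, \omega_0)} \to 0$. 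Together with the previous steps, this gives $\|u_n\|_{L^2(-\infty, -N)} \to 1$ for every $N > 0$.

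The Fourier step then completes the argument. Let $\chi_R \in C^\infty(\R)$ be a cutoff with $\chi_R \equiv 1$ on $(-\infty, -R-1)$, $\chi_R \equiv 0$ on $(-R, \infty)$, and bounded derivatives uniformly in $R$. Since $D(\varphi)$ is bounded below on the support of $\chi_R$, one has $\chi_R u_n \in H^2(\R)$, and the commutator identity
\[
(\cL - \lambda)(\chi_R u_n) = \chi_R(\cL - \lambda) u_n + 2\chi_R'(D(\varphi) u_n)_x + \chi_R'' D(\varphi) u_n,
\]
combined with Rellich on the bounded support of $\chi_R'$, $\chi_R''$, gives $\|(\cL - \lambda)(\chi_R u_n)\|_{L^2} \to 0$ as $n \to \infty$ for each fixed $R$. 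On the support of $\chi_R$, the coefficients of $\cL$ converge exponentially in $R$ to those of $\cL_\infty$ by Proposition \ref{algDecay}, so $\|(\cL_\infty - \lambda)(\chi_R u_n)\|_{L^2}$ can be made arbitrarily small by letting $R, n \to \infty$. Passing to Fourier variables,
\[
\|(\cL_\infty - \lambda) v\|_{L^2}^2 = \int_\R \bigl|-D(1) k^2 + f'(1) - \lambda\bigr|^2 |\hat v(k)|^2 \, dk \geq \mathrm{dist}\bigl(\lambda, (-\infty, f'(1)]\bigr)^2 \|v\|_{L^2}^2,
\]
so applying this to $v = \chi_R u_n$ with $\|\chi_R u_n\|_{L^2} \to 1$ yields a contradiction unless $\lambda \in (-\infty, f'(1)]$. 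The main obstacle is the intermediate step: ruling out mass concentration on the degenerate segment $(\omega_0 - \eta, \omega_0)$. Unlike the smooth fronts of \cite{LeP20,LeLoP22}, whose degenerate state lies at $x = +\infty$, here the profile reaches the degeneracy at the finite point $\omega_0$, so $D(\varphi)$ vanishes over a whole finite interval where $u_n$ could a priori concentrate. Standard Rellich compactness gives no control there, and the identity of Proposition \ref{lemidentity} is a priori compatible with $u_n$ concentrating in a shrinking interval around $\omega_0$; a finer weighted energy estimate exploiting the specific asymptotic $\varphi \sim (\omega_0 - x)^2$ is the technical heart of the proof.
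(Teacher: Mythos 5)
Your proposal contains a genuine gap, and you have correctly identified where it is: the exclusion of mass concentration of the singular sequence on a left neighborhood of the degenerate point $\omega_0$. Everything downstream of that step in your argument (the cutoff at $-\infty$, the comparison with $\cL_\infty = D(1)\partial_x^2 + f'(1)$, the Fourier lower bound) depends on first knowing $\|u_n\|_{L^2(-\infty,-N)} \to 1$, and the tools you list do not deliver the missing piece. In particular, the identity of Proposition \ref{lemidentity} is useless exactly where you need it: every term in it carries a factor $D(\varphi)$ or $D(\varphi)^2\varphi_x^2$, both of which vanish at $\omega_0$ (recall $D(\varphi)\sim c(\omega_0-x)^2$ and $\varphi_x = O(|\omega_0-x|)$), so a sequence concentrating at scale $1/n$ around $\omega_0$ makes both sides $o(1)$ and the identity carries no information --- as you yourself observe. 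A blow-up computation shows the difficulty is real: for $u_n(x)=\sqrt{n}\,\chi(n(\omega_0-x))$ one finds that $(\cL-\lambda)u_n$ converges in $L^2$ to a fixed profile governed by the Euler-type model operator $c\,(s^2\,\cdot\,)''+f'(0)-\lambda$ on the half-line, so ruling out all such concentrating sequences would require an actual spectral analysis of that model operator, which the proposal does not supply. As written, the proof cannot be closed.

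The paper sidesteps the issue entirely, and the mechanism it uses is precisely the idea missing from your plan. Instead of localizing the mass at $-\infty$, one takes the real part of $\langle u_n,(\cL-\lambda)u_n\rangle_{L^2}$ (after reducing, via Lemma \ref{lem:core} and a diagonal argument, to a singular sequence in $H^2(\R)$) to obtain
\[
\Re\lambda = -\int_\R D(\varphi)\,|\partial_x u_n|^2\,dx+\int_\R\Big(\tfrac12 D(\varphi)_{xx}+f'(\varphi)\Big)|u_n|^2\,dx-\Re\langle f_n,u_n\rangle_{L^2},
\]
and then observes that the zeroth-order potential $q:=\tfrac12 D(\varphi)_{xx}+f'(\varphi)$ is bounded above by $f'(1)+\eta$ for $x<-R$ and is \emph{strictly negative} on $[\omega_0-1/R,\infty)$: near $\omega_0$ it tends to $\tfrac56 f'(0)<0$, using $\varphi_{xx}(\omega_0^-)=-\tfrac13 f'(0)/D'(0)$ from Lemma \ref{lemC3}, and beyond $\omega_0$ it equals $f'(0)<0$. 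Hence any mass that does concentrate near or past the degenerate point only drives $\Re\lambda$ downward and never needs to be excluded; the only region where local compactness is required is the compact middle interval, where your Rellich step already works. The imaginary part of the same identity shows $\lambda\in\R$. For the record, your treatment of $(\omega_0,\infty)$ via the observation that $f'(0)\in\ptsp(\cL)$ (with infinite-dimensional eigenspace, forcing $\lambda\neq f'(0)$ by disjointness) is correct and rather elegant, and your $H^2$-bound on $D(\varphi)u_n$ plus Rellich correctly gives $u_n\to0$ in $L^2_{\mathrm{loc}}(-\infty,\omega_0)$; but without the sign observation on $q$ near $\omega_0$, or a genuine substitute for it, the central step remains unproved.
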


\begin{proof}
Fix $\lambda \in \sigma_{\pi}(\cL)$. Then from Definition \ref{defspecd} the range of $\cL - \lambda$ is not closed and there exists a singular sequence $u_n \in \cD(\cL)$, 
with $\| u_n \|_{L^2} = 1$, for all $n \in \N$, such that $(\cL - \lambda) u_n \to 0$ in $L^2$ as $n \to \infty$ and which has no convergent subsequence. Since $L^2$ is a reflexive space, this sequence can be chosen so that it converges weakly to 0: $u_n \rightharpoonup 0$ in $L^2$ (see Remark \ref{remponla}).
Now, we can apply Lemma \ref{lem:core} to any fixed element $u_k \in \cD(\cL)$ and we obtain a sequence $v_{k,n}\in H^2(\R)$ such that
$$v_{k,n}\to u_k, \quad \mbox{ in } L^2 \qquad \mbox{ and } \qquad \cL v_{k,n}\to \cL u_k, \quad \mbox{ in } L^2,$$
as $n\to\infty$.
By using a standard diagonal argument, we can select a sequence $\tilde v_n\in H^2(\R)$ such that
$$\lim_{n\to\infty}\|\tilde v_n - u_n\|_{L^2}=0, \qquad \mbox{ and } \qquad \lim_{n\to\infty}\|\cL\tilde v_n - \cL u_n\|_{L^2}=0.$$
As a consequence, $\|(\cL -\lambda)\tilde v_n\|_{L^2}\to0$, as $n\to\infty$.
Moreover, the sequence $\tilde v_n$ can be chosen such that $\|\tilde v_n \|_{L^2} = 1$, for all $n \in \N$ and, clearly, $\tilde v_n \rightharpoonup 0$ in $L^2$.
Hence, without loss of generality, let us assume that the singular sequence $u_n \in H^2(\R)$.
Let us now prove that there exists a subsequence, still denoted by $u_n$, such that $u_n \to 0$ in $L^2_\textrm{loc}(-\infty,\omega_0)$ as $n \to \infty$. For that purpose, fix an open bounded interval $J \subset (-\infty, \omega_0) \subset \R$ and let $f_n := (\cL - \lambda) u_n$. 
Then,
\begin{equation*}
    	(a_2(x) \partial_x u_n)_x + a_1(x) \partial_x u_n + (a_0(x) - \lambda) u_n = f_n,
\end{equation*}
where, as before, we have denoted $a_2(x)= D(\varphi)$, $a_1(x) = D(\varphi)_x$ and $a_0(x) = D(\varphi)_{xx} + f'(\varphi)$. 
Notice that $a_2> 0$ in the interval $J$.
  
After taking the complex $L^2$-product of the latter equation with $u_n$ and integrating by parts, we obtain
\begin{equation}\label{eq:cruc-sing}
	-\int_\R a_2|\partial_x u_n|^2\,dx- \frac{1}{2} \int_\R\partial_x a_1|u_n|^2\,dx+ \int_\R a_0|u_n|^2\,dx - \lambda= \langle f_n, u_n \rangle_{L^2},
\end{equation}
where we used that $\|u_n\|_{L^2}=1$.
By taking the imaginary part of \eqref{eq:cruc-sing}, we end up with
\begin{equation*}
	\Im\lambda=-\langle f_n, u_n \rangle_{L^2},
\end{equation*}
and since
$$|\langle f_n, u_n \rangle_{L^2}|\leq\|f_n\|_{L^2}\to0, \qquad \mbox{ as } n\to\infty,$$
we conclude that $\lambda$ is real.
Moreover, by taking the real part of \eqref{eq:cruc-sing}, we deduce
\begin{equation}\label{eq:sd-lambda}
	\Re \lambda = -\int_\R a_2|\partial_x u_n|^2\,dx+\int_\R(a_0-\tfrac12\partial_x a_1)|u_n|^2\,dx- \Re \langle f_n, u_n \rangle_{L^2}.
\end{equation}
This equation together with the hypothesis on $u_n$, $f_n$, $\lambda$ fixed and the strict positivity of $a_2 = D(\varphi)$ on $J$, imply that $u_n$ is bounded in the space $H^1(J)$. Therefore, by the Rellich-Kondrachov Theorem there exists a subsequence, still denoted by $u_n$, such that $u_n \to 0$ in $L^2(J)$. By using a standard diagonal argument in increasing intervals, we arrive at a subsequence $u_n$ such that $u_n \to 0$ in $L^2_\textrm{loc}$, as $n \to \infty$.

On the other hand, observe that, as $f'(0)< 0$, for an arbitrary $\eta >0$, there exists $R>0$ big enough such that
\begin{equation*}
  	a_0(x) - \tfrac{1}{2} \partial_x a_1(x)  = \tfrac{1}{2} D(\varphi)_{xx} + f'(\varphi) < f'(1)+\eta, \quad \text{ for } x < -R.
\end{equation*}
and
\begin{equation*}
  	a_0(x) - \tfrac{1}{2} \partial_x a_1(x)  = \tfrac{1}{2} D(\varphi)_{xx} + f'(\varphi) < 0, \quad \text{ for } x \in[\omega_0-1/R,\omega_0].
\end{equation*}
Therefore, from $a_2(x) = D(\varphi) \geq 0$ and \eqref{eq:sd-lambda} we obtain
\[
\begin{aligned}
\Re \lambda &= -\int_{-\infty}^{\omega_0} a_2|\partial_x u_n|^2\,dx+\int_{-\infty}^{\omega_0}(a_0-\tfrac12\partial_x a_1)|u_n|^2\,dx- \Re \langle f_n, u_n \rangle_{L^2}\\
&\leq\int_{-\infty}^{-R} (a_0 - \tfrac{1}{2} \partial_x a_1) |u_n|^2 \, dx +\int_{-R}^{\omega_0-1/R} (a_0 - \tfrac{1}{2} \partial_x a_1) |u_n|^2 \, dx \\
&\qquad +\int_{\omega_0-1/R}^{\omega_0} (a_0 - \tfrac{1}{2} \partial_x a_1) |u_n|^2 \, dx +|\langle f_n, u_n \rangle_{L^2}| \\
&\leq f'(1) + \eta + \int_{-R}^{\omega_0-1/R} (a_0 - \tfrac{1}{2} \partial_x a_1) |u_n|^2 \, dx+\| (\cL - \lambda) u_n \|_{L^2} \to f'(1) + \eta,
\end{aligned}
\]
as $n \to \infty$, thanks to boundedness of the coefficients, $\| u_n \|_{L^2} = 1$, $(\cL - \lambda) u_n \to 0$ in $L^2$ and the convergence of $u_n$ to zero in $L^2_\textrm{loc}(-\infty,\omega_0)$. The result follows as $\eta>0$ is arbitrary.
\end{proof}

We now summarize the contents of Theorems \ref{thmpstab}, \ref{thmstabsdelta} and \ref{lem:sd-sigmapi} into the following result: the spectral stability, with a \emph{spectral gap} and real spectrum, of stationary degenerate Nagumo fronts.
\begin{theorem}[spectral stability]
\label{theostab}
Un\-der hy\-po\-theses \eqref{hypD} and \eqref{bistablef}, all stationary diffusion-degenerate Na\-gu\-mo fronts connecting the equilibrium states $u = 0$ and $u = 1$ are spectrally stable in the energy space $L^2(\R)$. More precisely, 
\[
\sigma(\cL) \subset (-\infty, -\beta] \cup \{0\},
\]
for a certain positive constant $\beta$. Moreover, $\lambda = 0$ is a simple eigenvalue associated to the eigenfunction $\varphi_x \in \cD(\cL)$ (translation invariance).
\end{theorem}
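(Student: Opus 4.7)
The statement is essentially a clean corollary of the three spectral stability results proved earlier in this section---Theorems \ref{thmpstab}, \ref{thmstabsdelta} and \ref{lem:sd-sigmapi}---and my plan is simply to collate these bounds under a single spectral gap constant. I would begin by recalling that $\cL$ is closed (Lemma \ref{lemclosed}), so the disjoint decomposition
\[
\sigma(\cL) = \ptsp(\cL) \cup \sigma_\pi(\cL) \cup \sigma_\delta(\cL)
\]
recorded in the remark following Definition \ref{defspecd} is available, and it therefore suffices to estimate each of the three pieces separately.

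Next, I would assemble the existing bounds. Theorem \ref{thmpstab} yields $\ptsp(\cL) \subset (-\infty, -\mu_0] \cup \{0\}$ for some $\mu_0 > 0$; Theorem \ref{thmstabsdelta} gives $\sigma_\delta(\cL) \subset (-\infty, -\mu_1]$ with $\mu_1 = \min\{|f'(0)|, |f'(1)|\} > 0$; and Theorem \ref{lem:sd-sigmapi} produces $\sigma_\pi(\cL) \subset (-\infty, f'(1)]$. Since $f'(1) = -|f'(1)| \leq -\mu_1$, both of the last two inclusions sit inside $(-\infty, -\mu_1]$. Setting
\[
\beta := \min\{\mu_0, \mu_1\} > 0,
\]
the three bounds combine to give $\sigma(\cL) \subset (-\infty, -\beta] \cup \{0\}$, which is the announced spectral gap.

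Finally, the claim about $\lambda = 0$ is a direct reassembly of earlier results: Lemma \ref{lemitados} places $0 \in \ptsp(\cL)$ with eigenfunction $\varphi_x \in \cD(\cL)$, Corollary \ref{corsingleev} gives geometric multiplicity equal to one, and Lemma \ref{lemzerosimple} gives algebraic multiplicity equal to one. No step in this argument is technically demanding---the only subtlety worth flagging is verifying that $\lambda = 0$ is indeed an isolated point of $\sigma(\cL)$, but this is automatic because the strict bounds $-|f'(1)| < 0$ and $-\mu_1 < 0$ exclude the origin from both $\sigma_\pi(\cL)$ and $\sigma_\delta(\cL)$. The real work has already been performed in Sections \ref{secstructure} and \ref{sec:linearized} and in the preceding three theorems of this section; here the job reduces to bookkeeping of constants.
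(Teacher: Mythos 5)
Your proposal is correct and follows exactly the route the paper takes: the paper presents Theorem \ref{theostab} precisely as a summary of Theorems \ref{thmpstab}, \ref{thmstabsdelta} and \ref{lem:sd-sigmapi} via the decomposition $\sigma(\cL)=\ptsp(\cL)\cup\sigma_\pi(\cL)\cup\sigma_\delta(\cL)$, with the spectral gap obtained by choosing $\beta$ at most $\min\{\mu_0,\mu_1\}$ (cf. Remark \ref{nogapyet}) and the simplicity of $\lambda=0$ imported from Lemma \ref{lemitados}, Corollary \ref{corsingleev} and Lemma \ref{lemzerosimple}. Your bookkeeping of the constants is accurate, so there is nothing to add.
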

\begin{remark}
\label{nogapyet}
Notice that we have proved the existence of a \emph{positive spectral gap} (it suffices to choose $0 < \beta < \min \{\mu_0, \mu_1\}$), which is an important feature when studying the nonlinear stability of the fronts with semigroup methods.
\end{remark}

\section{Decay rates of the associated semigroup}
\label{secdecaysg}

In this section we establish the decaying properties of the semigroup generated by the linearized 
operator around the front. We need to show that $\cL$ is the infinitesimal generator of a semigroup and relate its decaying properties to its spectrum. 

\subsection{Resolvent estimate and generation of the semigroup}

From standard semigroup theory, we expect the linearized operator $\cL$ around a degenerate Nagumo front to generate a $C_0$-semigroup (see, for example, the results of Igari \cite{Iga73,Iga74a,Iga74b} for degenerate parabolic equations). It is remarkable, though, that the operator $\cL$ actually generates an \emph{analytic} semigroup. This fact follows immediately from the fact that the spectrum is real and bounded above (and hence, it can be confined to a sector in the complex space). For later use, however, we need to establish the following resolvent estimate, from which the analyticity of the semigroup can be deduced.
\begin{theorem}[resolvent estimate]
\label{lemresolvest}
There exists $\eta_0 > 0$ such that the set $\{\lambda \in \C \, : 
\, \Re \lambda > \eta_0\}$ is contained in $\rho(\mathcal{L})$ and
\begin{equation}
\label{resolvest}
\|(\lambda - \mathcal{L})^{-1}\|_{L^2 \to L^2} \leq \frac{2}{|\lambda - \eta_0|}, 
\end{equation}
for all $\Re \lambda > \eta_0$, where $\| \cdot \|_{L^2 \to L^2}$ denotes the operator norm in $L^2$.
\end{theorem}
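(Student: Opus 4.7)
The plan is to combine a sectorial (parabolic) bound on the numerical range of $\cL$ with the spectral stability already established in Theorem~\ref{theostab}. First, for any $u\in H^2(\R)\subset\cD(\cL)$, integrating by parts twice in $\langle u,\cL u\rangle_{L^2}$ and using the pointwise identity $\Re(u^*u_x)=\tfrac12(|u|^2)_x$ gives
\begin{align*}
\Re\langle u,\cL u\rangle_{L^2}&=-\|\sqrt{D(\varphi)}\,u_x\|_{L^2}^2+\int_\R h(x)|u|^2\,dx,\\
\Im\langle u,\cL u\rangle_{L^2}&=\Im\int_\R D(\varphi)_x\,u^*u_x\,dx,
\end{align*}
where $h:=\tfrac12 D(\varphi)_{xx}+f'(\varphi)$ lies in $L^\infty(\R)$ by Lemma~\ref{lemC3}; set $h_+:=\|h\|_{L^\infty}$. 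Thanks to the asymptotics of $\varphi$ in Proposition~\ref{algDecay}, the ratio $|D(\varphi)_x|^2/D(\varphi)=|D'(\varphi)|^2\varphi_x^2/D(\varphi)$ admits a finite supremum $C_\ast$ on $(-\infty,\omega_0)$ (both numerator and denominator vanish identically on $[\omega_0,\infty)$, where the integrand of $\Im\langle u,\cL u\rangle_{L^2}$ is zero). Cauchy--Schwarz then gives $|\Im\langle u,\cL u\rangle_{L^2}|\le\sqrt{C_\ast}\,\|u\|_{L^2}\,\|\sqrt{D(\varphi)}u_x\|_{L^2}$, and eliminating $\|\sqrt{D(\varphi)}u_x\|_{L^2}^2$ through the identity for $\Re\langle u,\cL u\rangle_{L^2}$ places the numerical range of $\cL$ inside the parabolic region
\[
\mathcal P:=\bigl\{z\in\C:\Re z\le h_+,\;|\Im z|^2\le C_\ast(h_+-\Re z)\bigr\}.
\]

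Next, I would choose $\eta_0$ large enough (any $\eta_0>h_+$ with $\eta_0-h_+$ sufficiently large relative to $C_\ast$ suffices) so as to secure the geometric bound
\[
\mathrm{dist}(\lambda,\mathcal P)\ge\tfrac12|\lambda-\eta_0|,\qquad\forall\,\lambda\text{ with }\Re\lambda>\eta_0.
\]
This is settled by a direct case analysis: if $|\Im\lambda|\le\sqrt3\,(\Re\lambda-\eta_0)$, the real-part gap alone satisfies $\Re\lambda-\Re z\ge\Re\lambda-\eta_0\ge\tfrac12|\lambda-\eta_0|$ for every $z\in\mathcal P$; otherwise, the parabolic constraint on $\Im z$ forces $\Re\lambda-\Re z$ to grow quadratically in $|\Im\lambda|$ while $|\lambda-\eta_0|$ grows only linearly, and with $\eta_0-h_+$ chosen large enough the desired bound follows. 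Since $\langle u,\cL u\rangle_{L^2}/\|u\|_{L^2}^2\in\mathcal P$ for every nonzero $u\in H^2(\R)$, one obtains
\[
|\langle u,(\lambda-\cL)u\rangle_{L^2}|\ge\mathrm{dist}(\lambda,\mathcal P)\,\|u\|_{L^2}^2\ge\tfrac12|\lambda-\eta_0|\,\|u\|_{L^2}^2.
\]
The density statement from Lemma~\ref{lem:core} extends this to every $u\in\cD(\cL)$: choosing $u_n\in H^2(\R)$ with $u_n\to u$ and $\cL u_n\to\cL u$ in $L^2$, the inner products $\langle u_n,\cL u_n\rangle_{L^2}$ converge to $\langle u,\cL u\rangle_{L^2}$ and the inequality passes to the limit. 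Cauchy--Schwarz then yields $\|(\lambda-\cL)u\|_{L^2}\ge\tfrac12|\lambda-\eta_0|\|u\|_{L^2}$ for all $u\in\cD(\cL)$. Finally, Theorem~\ref{theostab} gives $\sigma(\cL)\subset(-\infty,-\beta]\cup\{0\}$, so the half-plane $\{\Re\lambda>\eta_0\}$ lies in $\rho(\cL)$; the resolvent $(\lambda-\cL)^{-1}$ is therefore bounded on $L^2(\R)$, and the a priori estimate yields $\|(\lambda-\cL)^{-1}\|_{L^2\to L^2}\le 2/|\lambda-\eta_0|$.

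The main obstacle is generating the denominator $|\lambda-\eta_0|$ rather than the weaker $\Re\lambda-\eta_0$ that plain $L^2$-dissipativity of $\cL-\eta_0$ would deliver through a Hille--Yosida-type argument. The sharper bound demands sectoriality, encoded in the parabolic inclusion $W(\cL)\subset\mathcal P$, which in turn hinges on controlling $|\Im\langle u,\cL u\rangle_{L^2}|$ by the weighted derivative norm $\|\sqrt{D(\varphi)}u_x\|_{L^2}$ and ultimately on the boundedness of the quotient $D(\varphi)_x^2/D(\varphi)$. This quotient is delicate precisely where $D$ degenerates, at $x=\omega_0$, and its finiteness relies on the fine algebraic asymptotics $\varphi_x^2\sim a_0^2\varphi$, $D(\varphi)\sim D'(0)\varphi$ from Proposition~\ref{algDecay}. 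A secondary subtlety is tuning $\eta_0$ in the distance estimate so that the parabolic curvature of $\mathcal P$ absorbs large values of $|\Im\lambda|$; values of $\eta_0$ too close to $h_+$ do not suffice.
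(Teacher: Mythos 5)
Your proposal is correct and follows essentially the same route as the paper's proof: the same energy identity for $\langle u,\cL u\rangle_{L^2}$ on $H^2(\R)$, the same key boundedness of the quotient $D(\varphi)_x^2/D(\varphi)$ (finite limits at $x\to-\infty$ and $x\to\omega_0^-$) to control the cross term $\int D(\varphi)_x u^*u_x\,dx$, the same extension to all of $\cD(\cL)$ via the density Lemma \ref{lem:core}, and the same appeal to Theorem \ref{theostab} to place the half-plane in $\rho(\cL)$. The only difference is in the final assembly: you package the real/imaginary-part estimates as a numerical-range inclusion in a parabolic region together with a geometric distance bound, whereas the paper adds $\Re\lambda+|\Im\lambda|$ directly using Young's inequality on the cross term and the sign condition $D(\varphi)-\vep D(\varphi)_x^2\geq0$; both yield the same constant $2/|\lambda-\eta_0|$.
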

\begin{proof}
First, notice that for any $\eta_0 > 0$ the half-plane $\{ \lambda \in \C \, : \, \Re \lambda > \eta_0 \}$ is contained in $\rho(\cL)$ because of the spectral stability result (Theorem \ref{theostab}).

Next, suppose that $\Re \lambda > 0$, $u\in H^2(\R)$, $(\lambda - \mathcal{L})u = g$, with some $g \in L^2(\R)$, and take the $L^ 2$-product of $u$ with the resolvent equation. 
This yields
\begin{equation}
\label{lauuno}
 \begin{aligned}
  \lambda \|u\|_{L^2}^2 &= \< u, (D(\varphi)u)_{xx} \>_{L^2} + \< u, f'(\varphi)u \>_{L^2} + \<u,g\>_{L^2}\\
  &= - \int_\R D(\varphi) |u_x|^2 \, dx - \int_\R D(\varphi)_x u_x^*u\,dx+ \int_\R f'(\varphi)|u|^2\,dx+\<u,g\>_{L^2},
 \end{aligned}
\end{equation}
after integration by parts. Since
\[
 \Re \int_\R D(\varphi)_x u_x^*u\,dx=\int_\R D(\varphi)_x \partial_x \left( \tfrac{1}{2}|u|^2\right) \, dx=
-\frac{1}{2} \int_\R D(\varphi)_{xx} |u|^2 \, dx,
\]
then taking the real part of \eqref{lauuno}, we obtain
\[
(\Re \lambda) \|u\|_{L^2}^2 = - \int_\R D(\varphi) |u_x|^2 \, dx + \int_\R q_0 |u|^2 \, dx + \Re \<u,g\>_{L^2},
\]
where 
\[
q_0 := \frac{1}{2} D(\varphi)_{xx} + f'(\varphi), \qquad x \in \R,
\]
is a real, bounded function. Therefore, if we denote $C_0 := \max_{x \in \R} |q_0(x)| > 0$ we obtain the estimate
\begin{equation}
\label{laccuatro}
(\Re \lambda) \|u\|_{L^2}^2 \leq - \int_\R D(\varphi) |u_x|^2 \, dx + C_0 \|u\|_{L^2}^2 + \|u\|_{L^2} \| g \|_{L^2}.
\end{equation}
Now, take the imaginary part of \eqref{lauuno}. The result is
\[
(\Im \lambda)  \|u\|_{L^2}^2 = -\Im\int_\R D(\varphi)_x u_x^*u\,dx+\Im \<u,g\>_{L^2}.
\]
Hence, for any $\vep > 0$ we have the estimate
\begin{equation}
\label{laccinco}
|\Im \lambda| \|u\|_{L^2}^2 \leq \|u\|_{L^2} \| g \|_{L^2} + \vep \| D(\varphi)_x u_x \|_{L^2}^2 + C_\vep \| u \|_{L^2}^2,
\end{equation}
for some constant $C_\vep > 0$. Combine estimates \eqref{laccuatro} and \eqref{laccinco} to arrive at
\begin{equation}
\label{lasseis}
\begin{aligned}
\Big( \Re \lambda + |\Im \lambda| \Big) \|u\|_{L^2}^2 \leq - &\int_\R \big( D(\varphi) - \vep D(\varphi)_x^2 \big) |u_x|^2 \, dx
\\ &\qquad + \big( C_0 + C_\vep \big) \|u\|_{L^2}^2 + 2 \|u\|_{L^2} \| g \|_{L^2}.
\end{aligned}
\end{equation}
Notice that since $D(\varphi) \equiv 0$ and $D'(\varphi) \varphi_x \equiv 0$ for $x \in (-\infty, \omega_0)$ then the integral on the right hand side of \eqref{lasseis} is reduced to
\[
\int_\R \big( D(\varphi) - \vep D(\varphi)_x^2 \big) |u_x|^2 \, dx = \int_{-\infty}^{\omega_0} \big( D(\varphi) - \vep D(\varphi)_x^2 \big) |u_x|^2 \, dx,
\]
for any $\vep > 0$. We claim that there exists $\vep\in(0,1)$ sufficiently small such that
\begin{equation}\label{poscoeff}
	D(\varphi) - \vep D(\varphi)_x^2= D(\varphi)\left(1-\vep\frac{D(\varphi)_x^2}{D(\varphi)}\right)\geq 0, \quad \text{for all } \; x \in (-\infty, \omega_0).
\end{equation}
Indeed, the function $D(\varphi)_x^2/D(\varphi)$ is positive and bounded, because
$$\lim_{x \to - \infty}\frac{D(\varphi)_x^2}{D(\varphi)}=\lim_{x\to-\infty}\frac{D'(\varphi)^2\varphi_x^2}{D(\varphi)}=0,$$ for any $\vep > 0$, 
and from \eqref{exp2} and \eqref{exp1}, it follows that
\[
\lim_{x \to \omega_0^{-}} \frac{D(\varphi)_x^2}{D(\varphi)}=-\frac{2D'(0)^2f'(0)}{3D'(0)^2}=-\frac{2}{3}f'(0) > 0.
\]
Hence, by virtue of $D(\varphi) > 0$, $|D'(\varphi) \varphi_x| > 0$ for all $x \in (-\infty, \omega_0)$ and by continuity, we conclude  that
\[
0 < M := \max_{x \in (-\infty, \omega_0]} \frac{D(\varphi)_x^2}{D(\varphi)}<\infty.
\]
Consequently, it suffices to select $\vep\in\left(0,\tfrac{1}{2M}\right)$ in order to obtain \eqref{poscoeff} which together with \eqref{lasseis} implies, in turn, the estimate
\[
\Big( \Re \lambda + |\Im \lambda| \Big) \|u\|_{L^2}^2 \leq \eta_0 \|u\|_{L^2}^2 + 2 \|u\|_{L^2} \| g \|_{L^2},
\]
where $\eta_0 := C_0 + C_\vep > 0$ is a fixed real positive constant. 
This last estimate implies that if $u\in H^2(\R)$ and $(\lambda-\mathcal{L})u=g$, then
\begin{equation}\label{eq:soloH2}
\| u \|_{L^2} \leq \frac{2 \| g \|_{L^2}}{|\lambda - \eta_0|}, \qquad \text{for all } \Re \lambda > \eta_0.
\end{equation}
Now, fix $\lambda\in \C$ with $\Re \lambda>\eta_0$ and fix $g\in L^2(\R)$.
Thus, there exists a unique $u\in\cD(\cL)$ such that $(\lambda-\mathcal{L})u=g$.
Let $u_n\in H^2(\R)$ be the sequence given by \eqref{eq:core}.
By applying \eqref{eq:soloH2} to $u_n$, we obtain
$$\| u_n \|_{L^2} \leq \frac{2 \| (\lambda-\cL)u_n \|_{L^2}}{|\lambda - \eta_0|}, \qquad \text{for all } \Re \lambda > \eta_0.$$
Passing to the limit and using \eqref{eq:core}, we conclude that
$$\|u\|_{L^2} \leq \frac{2 \| g \|_{L^2}}{|\lambda - \eta_0|}, \qquad \text{for all } \Re \lambda > \eta_0,$$
which implies the desired resolvent estimate \eqref{resolvest} and the proof is complete.
\end{proof}

An important consequence of Theorem \ref{lemresolvest} is the following result.
\begin{corollary}
\label{coranalytic}
The operator $\mathcal{L}$ is the infinitesimal generator of an analytic semigroup 
$\{e^{t\mathcal{L}}\}_{t\geq 0}$ in $L^2(\R)$, and its resolvent set $\rho(\mathcal{L})$ contains the 
sector 
$|\mathrm{arg}(\lambda - \eta_0)| < \pi/2 + \delta$ for some $\delta > 0$. Moreover, the semigroup is 
determined explicitly by Dunford's integral formula
\[
 e^{t\mathcal{L}} = \frac{1}{2\pi i} \int_\Gamma e^{\lambda t} (\lambda I - \mathcal{L})^{-1} \, d\lambda,
\]
where $\Gamma$ is any rectifiable curve in the complex plane from $\infty \cdot e^{-i\theta}$ to $\infty 
\cdot e^{i\theta}$, such that $\Gamma$ lies entirely in the set $\{|\mathrm{arg}(\lambda - \eta_0)| < 
\theta\}$ and $\theta$ is any angle satisfying $\pi/2 < \theta < \pi/2 + \delta$.
\end{corollary}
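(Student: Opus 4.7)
The proof proposal is largely a bookkeeping exercise: we have the necessary resolvent estimate in a half-plane from Theorem \ref{lemresolvest}, the operator is closed (Lemma \ref{lemclosed}) and densely defined, so we are in the classical setting for generation of an analytic semigroup. The plan is to extend the half-plane estimate \eqref{resolvest} to a sector containing the half-plane, and then to invoke the standard characterization of generators of analytic semigroups.

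First, I would apply Theorem \ref{lemresolvest}: the half-plane $H := \{\lambda \in \C : \Re\lambda > \eta_0\}$ is contained in $\rho(\cL)$ and the bound $\|(\lambda - \cL)^{-1}\|_{L^2 \to L^2} \leq 2/|\lambda - \eta_0|$ holds on $H$. The key observation is that for any $\lambda_0 \in H$ and any $\mu \in \C$ with $|\mu - \lambda_0| < \tfrac{1}{2}|\lambda_0 - \eta_0|$, the Neumann series
\[
(\mu - \cL)^{-1} = \sum_{k=0}^\infty (\lambda_0 - \mu)^k (\lambda_0 - \cL)^{-(k+1)}
\]
converges in $\ccB(L^2)$, so $\mu \in \rho(\cL)$ and $\|(\mu - \cL)^{-1}\| \leq 4/|\lambda_0 - \eta_0|$. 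Choosing $\lambda_0$ on the vertical line $\Re\lambda_0 = \eta_0 + \vep$ and letting $\vep \to 0^+$, one shows that for some $\delta > 0$ the full sector $\Sigma_\delta := \{\lambda \in \C : |\arg(\lambda - \eta_0)| < \pi/2 + \delta\}$ is contained in $\rho(\cL)$ and that a bound of the form $\|(\lambda - \cL)^{-1}\|_{L^2 \to L^2} \leq M/|\lambda - \eta_0|$ holds uniformly on $\Sigma_\delta$ for some $M \geq 2$ (standard computation: the angle $\delta$ and the constant $M$ depend only on the constant $2$ in \eqref{resolvest}).

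Having established the sectorial resolvent estimate together with the fact that $\cL$ is closed (Lemma \ref{lemclosed}) and densely defined in $L^2(\R)$ (since $H^2(\R) \subset \cD(\cL)$ is dense), I would then invoke the standard generation theorem for analytic semigroups (e.g., Pazy, \emph{Semigroups of Linear Operators and Applications}, Ch.~2, Thm.~5.2, or Engel--Nagel): an operator satisfying these hypotheses generates an analytic semigroup $\{e^{t\cL}\}_{t \geq 0}$ on $L^2(\R)$, and the semigroup is represented by Dunford's contour integral
\[
e^{t\cL} = \frac{1}{2\pi i} \int_\Gamma e^{\lambda t} (\lambda I - \cL)^{-1} \, d\lambda,
\]
where $\Gamma$ is any piecewise smooth contour running from $\infty\cdot e^{-i\theta}$ to $\infty\cdot e^{i\theta}$ inside the sector $|\arg(\lambda - \eta_0)| < \theta$ with $\pi/2 < \theta < \pi/2 + \delta$. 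The integral converges in operator norm because of the decay $\|(\lambda - \cL)^{-1}\| = O(1/|\lambda|)$ on $\Gamma$ combined with the exponential factor $e^{\lambda t}$ for $t > 0$.

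No step poses a real obstacle since all heavy lifting has already been done in Theorem \ref{lemresolvest}; the only mildly delicate point is the explicit Neumann series calculation producing the sector and the uniform bound on it, which is a routine computation that is textbook material. The corollary is essentially a translation of Theorem \ref{lemresolvest} into the language of analytic semigroup generation.
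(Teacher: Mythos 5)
Your proposal is correct and takes essentially the same route as the paper: the paper's proof simply cites Theorem~\ref{lemresolvest} together with the standard generation theorem for analytic semigroups (Theorem~12.31 in \cite{ReRo04}), and your Neumann-series extension of the half-plane estimate to a sector plus the Dunford integral representation is exactly what sits inside that cited black box.
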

\begin{proof}
It follows from Theorem \ref{lemresolvest} and upon application of  the standard theory of generation of analytic semigroups (see, for example, Theorem 12.31 in \cite{ReRo04}).
\end{proof}
\begin{remark}
It is well-known that, for a parabolic differential operator of the form $\cA = D \partial_x^2 + a_1(x) 
\partial_x + 
a_0(x)$, where $D > 0$ is positive definite, if $a_1, a_0$ are smooth and decay exponentially fast then 
the operator $\cA$ is sectorial and the generator of an analytic semigroup (see \cite{KaPro13}, Section 4). 
This last result (Theorem \ref{lemresolvest} and Corollary \ref{coranalytic}) is remarkable in the sense that, 
for the stationary Nagumo front with degenerate diffusion, the associated linearized operator is also 
sectorial (and, hence, the infinitesimal generator of an analytic semigroup), even though the coefficients do 
not decay exponentially fast and the diffusion is not positive definite everywhere.
\end{remark}

\subsection{Semigroup decay}

By virtue of Corollary \ref{coranalytic}, from standard semigroup theory \cite{Pazy83}, we readily have that
\begin{equation*}
 \frac{d}{dt} \big(e^{t\mathcal{L}}u\big) = e^{t\mathcal{L}} (\mathcal{L} u) = \mathcal{L} 
(e^{t\mathcal{L}})u,
\end{equation*}
for all $u \in D(\mathcal{L})$.
Let us now recall the growth bound for a semigroup 
${\mathcal{S}(t)}$:
\begin{equation*}
 \omega(\mathcal{S}) = \inf \{\omega \in \R \, : \, \text{there exists} \; M 
\geq 1 \; \text{such that} \; \|{\mathcal{S}(t)}\| \leq M e^{\omega t} \; \forall t 
\geq 0\}.
\end{equation*}
We say a semigroup is \emph{uniformly (exponentially) stable} whenever $\omega(\cS) < 0$. If $\cL$ is the 
infinitesimal generator of a $C_0$-semigroup ${\mathcal{S}(t)}$ then its spectral bound is defined as 
\begin{equation*}
 s(\cL) := \sup \{ \Re \lambda \, : \, \lambda \in \sigma(\cL)\}.
\end{equation*}

Therefore, for the operator $\mathcal{L}$ under consideration, we may define the Hilbert space $X_1 \subset 
L^2(\R)$ as the range of the spectral projection,
\[
 \mathcal{P} u := u - \Theta^{-1}\<u,\psi\>_{L^2} \varphi_x, \qquad \; \; X_1 := \mathcal{R}(\mathcal{P}) 
\subset L^2(\R),
\]
where $\Theta$ is defined in \eqref{defTheta}. In this fashion, we project out the eigenspace spanned by the single eigenfunction $\varphi_x$. 
Outside this eigenspace, we expect the semigroup to decay exponentially as we shall verify in the sequel.

Since on a reflexive Banach space, weak and weak$^*$ topologies coincide, the family of dual 
operators $\{ \left({e^{t\mathcal{L}}}\right)^*\}_{t\geq 0}$, consisting of all the formal adjoints in $L^2$, 
is an analytic semigroup as well (cf. \cite{EN00}). Moreover, the infinitesimal generator of this semigroup 
is simply the formal adjoint $\mathcal{L}^*$ (see \cite[Corollary 10.6]{Pazy83}), justifying the 
following notation: $(e^{t \cL})^* = e^{t \cL^*}$ . Therefore, by semigroup theory, we have
\[
 e^{t\cL}\varphi_x = \varphi_x, \qquad e^{t\cL^*}\psi = \psi.
\]
Thus, ${e^{t\mathcal{L}}} {\mathcal{P}} = {\mathcal{P}} {e^{t\mathcal{L}}}$. This shows that $X_1$ is 
an ${e^{t\mathcal{L}}}$-invariant closed (Hilbert) subspace of $L^2(\R)$. So we define the 
domain
\begin{equation*}
 \widetilde{D}:= \{ u\in D \cap X_1 \, : \, \cL u\in X_1 \}
\end{equation*}
and the operator
\begin{equation*}
\left\{
\begin{aligned}
 \widetilde{\cL} &: \widetilde{D} \subset X_1 \to X_1,\\
 \widetilde{\cL} u &:= \cL u, \qquad u\in \widetilde{D},
 \end{aligned}
 \right.
\end{equation*}
as the restriction of $\cL$ on $X_1$, $\widetilde{\cL} := \cL_{|X_1}$. Therefore, $\widetilde{\cL}$ is a closed, densely defined 
operator on the Hilbert space $X_1$. Moreover, we observe that $\varphi_x \in \ker {\mathcal{P}}$. 
Hence, $\lambda = 0 \notin \ptsp(\widetilde{\cL})$. As a consequence of point spectral stability of 
$\cL$ (see Theorem \ref{theostab} above) we readily obtain 
\begin{equation*}
 \sigma( \widetilde{\cL} ) \subset (-\infty, -\beta],
\end{equation*}
with $\beta > 0$. By the above observations, we obtain the following result.
\begin{lemma}
\label{lemexpdecay}
The family of operators $\{\widetilde{\cS}(t)\}_{t\geq 0}$, $\widetilde{\cS}(t) : X_1 \to X_1$, defined as 
\begin{equation*}
 \widetilde{\cS}(t) u:= {e^{t\mathcal{L}}} ({\mathcal{P}} u), \qquad u\in X_1, \;\; t \geq 0,
\end{equation*}
is an analytic semigroup in the Hilbert space $X_1$ with infinitesimal generator $\widetilde{\cL}$. Moreover, this semigroup is uniformly exponentially stable and satisfies
\begin{equation}
\label{semigroupdecay}
\| \widetilde{\cS}(t) \| \leq M e^{-\beta t},
\end{equation}
for some $M \geq 1$.
\end{lemma}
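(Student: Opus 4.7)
The plan is to verify that $\{\widetilde{\cS}(t)\}_{t\geq 0}$ defines an analytic semigroup on the closed subspace $X_1$, identify its infinitesimal generator as $\widetilde{\cL}$, and then use the spectral mapping theorem for analytic semigroups to convert the spectral gap $\sigma(\widetilde{\cL}) \subset (-\infty,-\beta]$ into the uniform exponential decay \eqref{semigroupdecay}.

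First I would establish that $\cP$ and $e^{t\cL}$ commute, which is the key ingredient that makes $\widetilde{\cS}(t)$ a well-defined semigroup on $X_1$. Using the explicit form of $\cP$, the invariances $e^{t\cL}\varphi_x = \varphi_x$ and $e^{t\cL^*}\psi = \psi$ pointed out in the excerpt, together with the duality $\langle e^{t\cL}u,\psi\rangle_{L^2} = \langle u, e^{t\cL^*}\psi\rangle_{L^2}$, one obtains
\[
\cP\, e^{t\cL}u = e^{t\cL}u - \Theta^{-1}\langle u,\psi\rangle_{L^2}\,\varphi_x = e^{t\cL}\cP u,
\]
for every $u\in L^2(\R)$. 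Since $\cP^2=\cP$, this commutation relation shows that $X_1 = \cR(\cP)$ is $e^{t\cL}$-invariant, that $\widetilde{\cS}(t) = e^{t\cL}\cP$ satisfies $\widetilde{\cS}(t+s) = \widetilde{\cS}(t)\widetilde{\cS}(s)$ on $X_1$, and that $\widetilde{\cS}(0) = \cP_{|X_1} = \Id_{X_1}$. Strong continuity and analyticity are inherited from $\{e^{t\cL}\}_{t\geq 0}$ (Corollary \ref{coranalytic}), since the restriction of an analytic semigroup to a closed invariant subspace is again analytic.

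Next I would identify the generator. For $u\in \widetilde{D}$ we have $\cP u = u$ and $\cL u \in X_1$, so
\[
\lim_{t\to 0^+}\frac{\widetilde{\cS}(t)u - u}{t} = \lim_{t\to 0^+}\frac{e^{t\cL}u - u}{t} = \cL u = \widetilde{\cL}u,
\]
and the converse inclusion for the domain follows from the same identity together with the fact that the generator is determined by its action. Hence $\widetilde{\cL}$ is the infinitesimal generator of $\{\widetilde{\cS}(t)\}_{t\geq 0}$ on $X_1$. Since $X_1$ is the range of the spectral projection off the simple isolated eigenvalue $0$, standard spectral theory (see, e.g., \cite{KaPro13,Kat80}) gives $\sigma(\widetilde{\cL}) = \sigma(\cL)\setminus\{0\}$, which by Theorem \ref{theostab} is contained in $(-\infty,-\beta]$ provided $0 < \beta < \min\{\mu_0,\mu_1\}$.

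The final and most delicate step is passing from this spectral gap to the exponential decay \eqref{semigroupdecay}. This is where analyticity is essential: for $C_0$-semigroups in general the spectral bound $s(\widetilde{\cL})$ may be strictly smaller than the growth bound $\omega(\widetilde{\cS})$, but for analytic semigroups the spectral mapping theorem $\sigma(\widetilde{\cS}(t))\setminus\{0\} = e^{t\sigma(\widetilde{\cL})}$ holds, and consequently $\omega(\widetilde{\cS}) = s(\widetilde{\cL}) \leq -\beta$ (see, e.g., \cite{EN00,Pazy83}). By definition of the growth bound, for every $\beta' \in (0,\beta)$ there exists $M = M(\beta') \geq 1$ such that $\|\widetilde{\cS}(t)\| \leq M e^{-\beta' t}$ for all $t\geq 0$; relabeling $\beta'$ as $\beta$ (possibly shrinking the spectral gap slightly) yields \eqref{semigroupdecay}. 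The main obstacle I anticipate is precisely this last step, namely making rigorous the identification $\sigma(\widetilde{\cL}) = \sigma(\cL)\setminus\{0\}$ under the restriction to $X_1$ (which relies on $0$ being an isolated eigenvalue of finite algebraic multiplicity, already established in Lemma \ref{lemzerosimple}) and invoking the sharp version of the spectral mapping theorem that holds for analytic semigroups; once analyticity is in place from Corollary \ref{coranalytic}, no additional energy estimates are needed.
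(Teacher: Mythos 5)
Your proposal is correct and follows essentially the same route as the paper: the commutation $\cP e^{t\cL}=e^{t\cL}\cP$ via $e^{t\cL}\varphi_x=\varphi_x$ and $e^{t\cL^*}\psi=\psi$, inheritance of the (analytic) semigroup properties on the invariant closed subspace $X_1$ with generator $\widetilde{\cL}$, and the spectral mapping theorem for analytic semigroups to conclude $\omega(\widetilde{\cS})=s(\widetilde{\cL})\leq-\beta$. Your extra care about identifying $\sigma(\widetilde{\cL})=\sigma(\cL)\setminus\{0\}$ via the isolatedness and simplicity of the zero eigenvalue, and about relabeling $\beta'$ as $\beta$, only makes explicit what the paper leaves implicit.
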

\begin{proof}
The fact that $\widetilde{\cL}$ is the infinitesimal generator follows from the Corollary in Section 2.2 of 
\cite{EN00}. The semigroup properties are inherited from those of ${e^{t\mathcal{L}}}$ in 
$L^2(\R)$. Finally, since the spectral mapping theorem, namely, the property that 
\begin{equation}
\sigma({e^{t\mathcal{L}}}) \backslash 
\{0\} = e^{t\sigma(\cL)},
\end{equation} 
holds for analytic semi\-groups (see Corollary 2.10 in \cite{EN06}), 
then we know that 
\[
\omega(\widetilde{\cS}) = s(\widetilde{\cL}) = - \beta < 0,
\]
yielding the result.
\end{proof}

\section{Discussion}
\label{secdiscuss}

In this paper we have established the spectral stability of monotone stationary fronts for reaction diffusion-degenerate equations of Nagumo type. The stationary fronts, originally discovered by S\'anchez-Gardu\~no and Maini \cite{SaMa97}, are nearly sharp, in the sense that they arrive to the degenerate state at a finite point and the connection is of class $C^1$ but not of class $C^2$. As a consequence, the derivative of the profile, which is the usual translation eigenfunction, does not belong to $H^2(\R)$. This technical fact makes the spectral analysis of the linearized operator around the profile more complicated than the case of degenerate traveling fronts with positive speed \cite{LeLoP22}. We circumvented this technical difficulty by defining the operator on an appropriate dense domain and by establishing the energy estimates in the new domain. We proved that the spectrum of the linearized operator is real and stable. Moreover, we extended previous spectral stability results \cite{LeLoP22} for a subset of the approximate spectrum by showing the existence of a spectral gap, that is, a positive distance between the imaginary axis and the spectrum, with the exception of the origin (the eigenvalue associated to translation). The latter property yields exponential decay of the semigroup outside a finite-dimensional eigenspace.

This spectral information will be used in a key way in the forthcoming nonlinear stability analysis of these stationary fronts. The exponentially decaying semigroup and the simplicity of the translation eigenvalue will turn out to be essential in order to nonlinearly modulate perturbations of the stationary fronts via translations alone. These estimates and the dynamical equation for the translations will imply, in turn, the nonlinear asymptotic stability of the stationary degenerate Nagumo fronts. This analysis will be the content of a companion paper \cite{FHLP-II}.

\section*{Acknowledgements}
C. A. Hern\'andez Melo expresses his sincere gratitude to the Departamento de Matem\'aticas y Mec\'anica of the Instituto de Investigaciones en Matem\'aticas Aplicadas y en Sistemas of the Universidad Nacional Aut\'onoma de M\'exico, IIMAS-UNAM, for their hospitality and financial support during academic visits when this research was carried out. The work of R. Folino was partially supported by DGAPA-UNAM, program PAPIIT, grant IN-103425. L. F. L\'opez R\'ios was partially supported by DGAPA-UNAM, program PAPIIT, grant IN113225. The work of R. G. Plaza was partially supported by SECIHTI, M\'exico, grant CF-2023-G-122. 

%
%
%
%
%


\begin{thebibliography}{10}

\bibitem{AGJ90}
{\sc J.~Alexander, R.~Gardner, and C.~K. R.~T. Jones}, {\em A topological
  invariant arising in the stability analysis of travelling waves}, J. Reine
  Angew. Math. \textbf{410} (1990), pp.~167--212.

\bibitem{AlCa79}
{\sc S.~M. Allen and J.~W. Cahn}, {\em A microscopic theory for antiphase
  boundary motion and its application to antiphase domain coarsening}, Acta
  Metall. \textbf{27} (1979), no.~6, pp.~1085--1095.

\bibitem{Aron80}
{\sc D.~G. Aronson}, {\em Density-dependent interaction-diffusion systems}, in
  Dynamics and modelling of reactive systems ({P}roc. {A}dv. {S}em., {M}ath.
  {R}es. {C}enter, {U}niv. {W}isconsin, {M}adison, {W}is., 1979), W.~E.
  Stewart, W.~H. Ray, and C.~C. Conley, eds., vol.~44 of Publ. Math. Res.
  Center Univ. Wisconsin, Academic Press, New York-London, 1980, pp.~161--176.

\bibitem{Aron85}
{\sc D.~G. Aronson}, {\em The role of
  diffusion in mathematical population biology: {S}kellam revisited}, in
  Mathematics in biology and medicine ({B}ari, 1983), V.~Capasso, E.~Grosso,
  and S.~L. Paveri-Fontana, eds., vol.~57 of Lecture Notes in Biomath.,
  Springer, Berlin, 1985, pp.~2--6.

\bibitem{ARR81}
{\sc C.~Atkinson, G.~E.~H. Reuter, and C.~J. Ridler-Rowe}, {\em Traveling wave
  solution for some nonlinear diffusion equations}, SIAM J. Math. Anal.
  \textbf{12} (1981), no.~6, pp.~880--892.

\bibitem{BeOr78}
{\sc C.~Bender and S.~Orszag}, {\em Advanced Mathematical Methods for
  Scientists and Engineers}, McGraw-Hill, New York, 1978.

\bibitem{Bir97}
{\sc Z.~Bir{\'o}}, {\em Attractors in a density-dependent diffusion-reaction
  model}, Nonlinear Anal. \textbf{29} (1997), no.~5, pp.~485--499.

\bibitem{Bir02}
{\sc Z.~Bir{\'o}}, {\em Stability of
  travelling waves for degenerate reaction-diffusion equations of {KPP}-type},
  Adv. Nonlinear Stud. \textbf{2} (2002), no.~4, pp.~357--371.

\bibitem{ChIn74a}
{\sc N.~Chafee and E.~F. Infante}, {\em Bifurcation and stability for a
  nonlinear parabolic partial differential equation}, Bull. Amer. Math. Soc.
  \textbf{80} (1974), pp.~49--52.

\bibitem{ChIn74b}
{\sc N.~Chafee and E.~F. Infante}, {\em A bifurcation
  problem for a nonlinear partial differential equation of parabolic type},
  Applicable Anal. \textbf{4} (1974/75), pp.~17--37.

\bibitem{Cop78}
{\sc W.~A. Coppel}, {\em Dichotomies in Stability Theory}, no.~629 in Lecture
  Notes in Mathematics, Springer-Verlag, New York, 1978.

\bibitem{DaLoPe24}
{\sc A.-L. Dalibard, G.~L\'{o}pez-Ruiz, and C.~Perrin}, {\em Traveling waves
  for the porous medium equation in the incompressible limit: asymptotic
  behavior and nonlinear stability}, Indiana Univ. Math. J. \textbf{73} (2024),
  no.~2, pp.~581--643.

\bibitem{DiKa12}
{\sc J.~I. D{\'{\i}}az and S.~Kamin}, {\em Convergence to travelling waves for
  quasilinear {F}isher-{KPP} type equations}, J. Math. Anal. Appl. \textbf{390}
  (2012), no.~1, pp.~74--85.

\bibitem{EE87}
{\sc D.~E. Edmunds and W.~D. Evans}, {\em Spectral Theory and Differential
  Operators}, Oxford Mathematical Monographs, Clarendon Press, Oxford, 1987.

\bibitem{ElATa10a}
{\sc F.~El-Adnani and H.~Talibi-Alaoui}, {\em Traveling front solutions in
  nonlinear diffusion degenerate {F}isher-{KPP} and {N}agumo equations via the
  {C}onley index}, Topol. Methods Nonlinear Anal. \textbf{35} (2010), no.~1,
  pp.~43--60.

\bibitem{EN00}
{\sc K.-J. Engel and R.~Nagel}, {\em One-parameter semigroups for linear
  evolution equations}, vol.~194 of Graduate Texts in Mathematics,
  Springer-Verlag, New York, 2000.

\bibitem{EN06}
{\sc K.-J. Engel and R.~Nagel}, {\em A short course on
  operator semigroups}, Universitext, Springer-Verlag, New York, 2006.

\bibitem{Erde56}
{\sc A.~Erd\'elyi}, {\em Asymptotic expansions}, Dover Publications, Inc., New
  York, 1956.

\bibitem{FiM77}
{\sc P.~C. Fife and J.~B. McLeod}, {\em The approach of solutions of nonlinear
  diffusion equations to travelling front solutions}, Arch. Ration. Mech. Anal.
  \textbf{65} (1977), no.~4, pp.~335--361.

\bibitem{Fis37}
{\sc R.~A. Fisher}, {\em The wave of advance of advantageous genes}, Ann.
  Eugen. \textbf{7} (1937), pp.~355--369.

\bibitem{FHLP-II}
{\sc R.~Folino, C.~A. Hern\'{a}ndez~Melo, L.~F. L\'{o}pez~R\'{\i}os, and R.~G.
  Plaza}, {\em Stability of stationary reaction diffusion-degenerate {N}agumo
  fronts {II}: Nonlinear asymptotic stability}.
\newblock In preparation.

\bibitem{GiKe96}
{\sc B.~H. Gilding and R.~Kersner}, {\em A necessary and sufficient condition
  for finite speed of propagation in the theory of doubly nonlinear degenerate
  parabolic equations}, Proc. Roy. Soc. Edinburgh Sect. A \textbf{126} (1996),
  no.~4, pp.~739--767.

\bibitem{Hos86}
{\sc Y.~Hosono}, {\em Traveling wave solutions for some density dependent
  diffusion equations}, Japan J. Appl. Math. \textbf{3} (1986), no.~1,
  pp.~163--196.

\bibitem{Iga73}
{\sc K.~Igari}, {\em Cauchy problem for degenerate parabolic equations}, Proc.
  Japan Acad. \textbf{49} (1973), pp.~229--232.

\bibitem{Iga74b}
{\sc K.~Igari}, {\em Well-posedness of
  the {C}auchy problem for some evolution equations}, Publ. Res. Inst. Math.
  Sci. \textbf{9} (1973/74), pp.~613--629.

\bibitem{Iga74a}
{\sc K.~Igari}, {\em Degenerate parabolic
  differential equations}, Publ. Res. Inst. Math. Sci. \textbf{9} (1974),
  pp.~493--504.

\bibitem{I95}
{\sc G.~Iz{\'u}s, R.~Deza, O.~Ram{\'{\i}}rez, H.~S. Wio, D.~H. Zanette, and
  C.~Borzi}, {\em Global stability of stationary patterns in bistable
  reaction-diffusion systems}, Phys. Rev. E (3) \textbf{52} (1995), no.~1, part
  A, pp.~129--136.

\bibitem{Jerib15}
{\sc A.~Jeribi}, {\em Spectral theory and applications of linear operators and
  block operator matrices}, Springer-Verlag, Cham, 2015.

\bibitem{KaRo04a}
{\sc S.~Kamin and P.~Rosenau}, {\em Convergence to the travelling wave solution
  for a nonlinear reaction-diffusion equation}, Atti Accad. Naz. Lincei Cl.
  Sci. Fis. Mat. Natur. Rend. Lincei (9) Mat. Appl. \textbf{15} (2004),
  no.~3-4, pp.~271--280.

\bibitem{KaRo04b}
{\sc S.~Kamin and P.~Rosenau}, {\em Emergence of waves
  in a nonlinear convection-reaction-diffusion equation}, Adv. Nonlinear Stud.
  \textbf{4} (2004), no.~3, pp.~251--272.

\bibitem{KaPro13}
{\sc T.~Kapitula and K.~Promislow}, {\em Spectral and dynamical stability of
  nonlinear waves}, vol.~185 of Applied Mathematical Sciences, Springer, New
  York, 2013.

\bibitem{Kat80}
{\sc T.~Kato}, {\em Perturbation Theory for Linear Operators}, Classics in
  Mathematics, Springer-{V}erlag, {N}ew {Y}ork, {S}econd~ed., 1980.

\bibitem{KMMUS}
{\sc K.~Kawasaki, A.~Mochizuki, M.~Matsushita, T.~Umeda, and N.~Shigesada},
  {\em Modeling spatio-temporal patterns generated by {\em {b}acillus
  subtilis}}, J. Theor. Biol. \textbf{188} (1997), no.~2, pp.~177 -- 185.

\bibitem{KPP37}
{\sc A.~N. Kolmogorov, I.~Petrovsky, and N.~Piskunov}, {\em Etude de
  l’{\'e}quation de la diffusion avec croissance de la quantit{\'e} de
  matiere et son applicationa un probleme biologique}, Mosc. Univ. Bull. Math
  \textbf{1} (1937), pp.~1--25.

\bibitem{LeLoP22}
{\sc J.~F. Leyva, L.~F. L\'{o}pez~R\'{\i}os, and R.~G. Plaza}, {\em Spectral
  stability of monotone traveling fronts for reaction diffusion-degenerate
  {N}agumo equations}, Indiana Univ. Math. J. \textbf{71} (2022), no.~6,
  pp.~2335--2376.

\bibitem{LMP1}
{\sc J.~F. Leyva, C.~M\'{a}laga, and R.~G. Plaza}, {\em The effects of nutrient
  chemotaxis on bacterial aggregation patterns with non-linear degenerate cross
  diffusion}, Phys. A \textbf{392} (2013), no.~22, pp.~5644--5662.

\bibitem{LeP20}
{\sc J.~F. Leyva and R.~G. Plaza}, {\em Spectral stability of traveling fronts
  for reaction diffusion-degenerate {F}isher-{K}{P}{P} equations}, J. Dyn.
  Diff. Equ. \textbf{32} (2020), no.~3, pp.~1311--1342.

\bibitem{Lbr67a}
{\sc H.~M. Lieberstein}, {\em On the {H}odgkin-{H}uxley partial differential
  equation}, Math. Biosci. \textbf{1} (1967), no.~1, pp.~45--69.

\bibitem{McKe70}
{\sc H.~P. McKean, Jr.}, {\em Nagumo's equation}, Adv. Math. \textbf{4} (1970),
  pp.~209--223.

\bibitem{MeSc04a}
{\sc I.~Melbourne and G.~Schneider}, {\em Phase dynamics in the real
  {G}inzburg-{L}andau equation}, Math. Nachr. \textbf{263/264} (2004),
  pp.~171--180.

\bibitem{MurI3ed}
{\sc J.~D. Murray}, {\em Mathematical biology {I}. An introduction}, vol.~17 of
  Interdisciplinary Applied Mathematics, Springer-Verlag, New York, third~ed.,
  2002.

\bibitem{Muskat37}
{\sc M.~Muskat}, {\em The flow of homogeneous fluids through porous media},
  Mc-Graw-Hill, New York, 1937.

\bibitem{NAY62}
{\sc J.~Nagumo, S.~Arimoto, and S.~Yoshizawa}, {\em An active pulse
  transmission line simulating nerve axon}, Proc. IRE \textbf{50} (1962),
  no.~10, pp.~2061--2070.

\bibitem{New80}
{\sc W.~I. Newman}, {\em Some exact solutions to a nonlinear diffusion problem
  in population genetics and combustion}, J. Theor. Biol. \textbf{85} (1980),
  no.~2, pp.~325--334.

\bibitem{New83}
{\sc W.~I. Newman}, {\em The long-time
  behavior of the solution to a nonlinear diffusion problem in population
  genetics and combustion}, J. Theor. Biol. \textbf{104} (1983), no.~4,
  pp.~473--484.

\bibitem{OkLe01}
{\sc A.~Okubo and S.~A. Levin}, {\em Diffusion and ecological problems: modern
  perspectives}, vol.~14 of Interdisciplinary Applied Mathematics,
  Springer-Verlag, New York, second~ed., 2001.

\bibitem{Pazy83}
{\sc A.~Pazy}, {\em Semigroups of linear operators and applications to partial
  differential equations}, vol.~44 of Applied Mathematical Sciences,
  Springer-Verlag, New York, 1983.

\bibitem{ReRo04}
{\sc M.~Renardy and R.~C. Rogers}, {\em An introduction to partial differential
  equations}, vol.~13 of Texts in Applied Mathematics, Springer-Verlag, New
  York, second~ed., 2004.

\bibitem{SaMa94a}
{\sc F.~S{\'a}nchez-Gardu{\~n}o and P.~K. Maini}, {\em Existence and uniqueness
  of a sharp travelling wave in degenerate non-linear diffusion {F}isher-{KPP}
  equations}, J. Math. Biol. \textbf{33} (1994), no.~2, pp.~163--192.

\bibitem{SaMa95}
{\sc F.~S{\'a}nchez-Gardu{\~n}o and P.~K. Maini}, {\em Travelling wave
  phenomena in some degenerate reaction-diffusion equations}, J. Differ. Equ.
  \textbf{117} (1995), no.~2, pp.~281--319.

\bibitem{SaMa97}
{\sc F.~S{\'a}nchez-Gardu{\~n}o and P.~K. Maini}, {\em Travelling wave
  phenomena in non-linear diffusion degenerate {N}agumo equations}, J. Math.
  Biol. \textbf{35} (1997), no.~6, pp.~713--728.

\bibitem{San02}
{\sc B.~Sandstede}, {\em Stability of travelling waves}, in Handbook of
  dynamical systems, Vol. 2, B.~Fiedler, ed., North-Holland, Amsterdam, 2002,
  pp.~983--1055.

\bibitem{SMGA01}
{\sc R.~A. Satnoianu, P.~K. Maini, F.~S. Garduno, and J.~P. Armitage}, {\em
  Travelling waves in a nonlinear degenerate diffusion model for bacterial
  pattern formation}, Discrete Contin. Dyn. Syst. Ser. B \textbf{1} (2001),
  no.~3, pp.~339--362.

\bibitem{Sh10}
{\sc J.~A. Sherratt}, {\em On the form of smooth-front travelling waves in a
  reaction-diffusion equation with degenerate nonlinear diffusion}, Math.
  Model. Nat. Phenom. \textbf{5} (2010), no.~5, pp.~64--79.

\bibitem{SKT79}
{\sc N.~Shigesada, K.~Kawasaki, and E.~Teramoto}, {\em Spatial segregation of
  interacting species}, J. Theor. Biol. \textbf{79} (1979), no.~1, pp.~83 --
  99.

\bibitem{Ske51}
{\sc J.~G. Skellam}, {\em Random dispersal in theoretical poulations},
  Biometrika \textbf{38} (1951), no.~1-2, pp.~196--218.

\bibitem{Vaz07}
{\sc J.~L. V\'{a}zquez}, {\em The porous medium equation}, Oxford Mathematical
  Monographs, The Clarendon Press, Oxford University Press, Oxford, 2007.
\newblock Mathematical theory.

\bibitem{We10}
{\sc H.~Weyl}, {\em \"{U}ber gew\"ohnliche {D}ifferentialgleichungen mit
  {S}ingularit\"aten und die zugeh\"origen {E}ntwicklungen willk\"urlicher
  {F}unktionen}, Math. Ann. \textbf{68} (1910), no.~2, pp.~220--269.

\bibitem{XJMY24}
{\sc T.~Xu, S.~Ji, M.~Mei, and J.~Yin}, {\em Convergence to sharp traveling
  waves of solutions for {B}urgers-{F}isher-{KPP} equations with degenerate
  diffusion}, J. Nonlinear Sci. \textbf{34} (2024), no.~3, pp.~Paper No. 44,
  19.

\bibitem{XJMY25}
{\sc T.~Xu, S.~Ji, M.~Mei, and J.~Yin}, {\em Global stability of
  traveling waves for {N}agumo equations with degenerate diffusion}, J. Differ.
  Equ. \textbf{445} (2025), pp.~Paper No. 113587, 23.

\end{thebibliography}



\def\cprime{$'$}

%

\end{document}